\newtheorem{theorem}{Theorem}[section]
\newtheorem{lemma}[theorem]{Lemma}
\newtheorem{proposition}[theorem]{Proposition}
\newtheorem{corollary}[theorem]{Corollary}
\theoremstyle{definition}
\newtheorem{example}[theorem]{Example}
\newtheorem{remark}[theorem]{Remark}
\newenvironment{proofmain}{\noindent {\bf Proof of Theorem \ref{main}:}}{\qed \par}
\newcommand{\becircled}{\mathaccent "7017}
\newcommand{\excise}[1]{}
\newcommand{\Spec}{\operatorname{Spec}}
\newcommand{\Ext}{\operatorname{Ext}}
\newcommand{\codim}{\operatorname{codim}}
\renewcommand{\dim}{\operatorname{dim}}
\newcommand{\Sym}{\operatorname{Sym}}
\newcommand{\rk}{\operatorname{rk}}
\newcommand{\Gr}{\operatorname{Gr}}
\renewcommand{\and}{\qquad\text{and}\qquad}
\newcommand{\Z}{\mathbb{Z}}
\newcommand{\Q}{\mathbb{Q}}
\newcommand{\R}{\mathbb{R}}
\newcommand{\bP}{\mathbb{P}}
\newcommand{\bA}{\mathbb{A}}
\newcommand{\Fq}{\mathbb{F}_q}
\newcommand{\Fqb}{\overline{\mathbb{F}}_q}
\newcommand{\Fqs}{\mathbb{F}_{q^s}}
\newcommand{\IC}{\operatorname{IC}}
\renewcommand{\H}{\operatorname{H}}
\newcommand{\IH}{\operatorname{IH}}
\newcommand{\Fr}{\operatorname{Fr}}
\newcommand{\cJ}{\mathcal{J}}
\newcommand{\cI}{\mathcal{I}}
\newcommand{\Ql}{\mathbb{Q}_\ell}
\renewcommand{\a}{\alpha}
\newcommand{\cA}{\mathcal{A}}
\newcommand{\cN}{\mathcal{N}}
\newcommand{\cM}{\mathcal{M}}
\renewcommand{\cL}{\mathcal{L}}
\newcommand{\la}{\lambda}
\newcommand{\Lie}{\operatorname{Lie}}
\newcommand{\gm}{\mathbb{G}_m}
\newcommand{\Aut}{\operatorname{Aut}}
\newcommand{\rxz}{r_{xz}}
\newcommand{\Ih}{\mathscr{I}_{\nicefrac{1}{2}}}
\newcommand{\scrI}{\mathscr{I}}
\newcommand{\cO}{\mathcal{O}}
\newcommand{\hbc}{h^{\operatorname{bc}}}
\begin{document}
\spacing{1.2}
\noindent{\Large\bf The algebraic geometry of Kazhdan-Lusztig-Stanley polynomials}\\

\noindent{\bf Nicholas Proudfoot}\\
Department of Mathematics, University of Oregon,
Eugene, OR 97403\\
njp@uoregon.edu\\

{\small
\begin{quote}
\noindent {\em Abstract.} 
Kazhdan-Lusztig-Stanley polynomials are a combinatorial generalization of Kazhdan-Lusztig polynomials
of Coxeter groups that include $g$-polynomials of polytopes and Kazhdan-Lusztig polynomials of matroids.
In the cases of Weyl groups, rational polytopes, and realizable matroids, one can count points over finite fields on flag varieties,
toric varieties, or reciprocal planes to obtain cohomological interpretations of these polynomials.  We survey these results
and unite them under a single geometric framework.
\end{quote} }

\section{Introduction}
The original definition of Kazhdan-Lusztig polynomials for a Coxeter group 
involves the relationship between two bases for the Hecke algebra, however the polynomials are  
characterized by a purely combinatorial recursion involving intervals in the Bruhat poset \cite[Equation (2.2.a)]{KL79}.
Stanley later generalized this recursive definition, replacing the Bruhat poset with an arbitrary locally 
graded poset \cite[Definition 6.2(b)]{Stanley-h}.  Stanley's main motivation was the observation that the 
$g$-polynomial of a polytope, which he introduced in \cite{Stanley-IC},
arises very naturally in this way \cite[Example 7.2]{Stanley-h}.  Brenti went on to generalize this definition slightly further to
weakly ranked posets, and dubbed the corresponding polynomials {\bf Kazhdan-Lusztig-Stanley polynomials}
\cite{Brenti-twisted}.  More examples have been studied since then, including Kazhdan-Lusztig polynomials
of matroids, which were introduced in \cite{EPW} and have been the subject of much recent research.

In a sequel to their first paper, Kazhdan and Lusztig 
% conjectured that their polynomials had non-negative coefficients \cite[Remark 1.6(b)]{KL79},
% and they proved this conjecture in the case of a Weyl group by 
proved that their polynomials can be interpreted
as Poincar\'e polynomials for the stalk cohomology groups of the intersection cohomology sheaves of Schubert varieties
\cite[Theorem 4.3]{KL80}.  The idea of the proof is that the combinatorial recursion for the polynomials
is precisely the recursion for the Poincar\'e polynomials that one obtains by applying the Lefschetz fixed point formula
to the Frobenius automorphism of certain subvarieties of the flag variety.  This technique has subsequently been imported
to the study of other classes of Kazhdan-Lusztig-Stanley polynomials, including 
Kazhdan-Lusztig polynomials of affine Weyl groups \cite[Section 11]{Lusztig-singularities},
$g$-polynomials of rational polytopes \cite[Theorem 6.2]{DL} and \cite[Theorem 1.2]{Fieseler},
$h$-polynomials of broken circuit complexes of a rational hyperplane arrangements
\cite[Theorem 4.3]{PW07}, and Kazhdan-Lusztig polynomials of hyperplane arrangements \cite[Theorem 3.10]{EPW}.

While the various instances of the aforementioned Lefschetz argument are all based on the same idea, they all involve
a rather messy induction, and it can be difficult to determine exactly what ingredients are needed to make the argument work.
The purpose of this document is to do exactly that.  After reviewing the combinatorial theory of Kazhdan-Lusztig-Stanley polynomials
(Section \ref{sec:combinatorics}), we lay out a basic geometric framework for interpreting these polynomials as Poincar\'e polynomials
of stalks of intersection cohomology sheaves on a stratified variety (Section \ref{sec:geometry}).  In particular, we show that each
of the aforementioned results can be obtained as an application of our general machine (Section \ref{sec:examples}) without having
to redo the inductive argument each time.\\

Though our main purpose is to survey and unify various old results, there is one new concept that we introduce
and study here.  When defining Kazhdan-Lusztig-Stanley polynomials, there is a left
versus right convention that appears in the definition.
The left Kazhdan-Lusztig-Stanley polynomials for a weakly graded poset $P$ coincide with the right Kazhdan-Lusztig-Stanley
polynomials for the opposite poset $P^*$ (Remark \ref{opposite}).  In particular, since the Bruhat poset of a finite Coxeter group
is self-opposite and the face poset of a polytope is opposite to the face poset of the dual polytope, the left/right issue (while at
times confusing) is not so important.  The same statement is not true of the lattice of flats of a matroid, and indeed the right
Kazhdan-Lusztig-Stanley polynomials of a matroid are interesting while the left ones are trivial (Example \ref{ex:char}).
We introduce a class of polynomials called {\bf \boldmath{$Z$}-polynomials} (Section \ref{sec:Z}) that depend on both the left and right
Kazhdan-Lusztig-Stanley polynomials.  
In the case of the lattice of flats of a matroid, these polynomials coincide with the polynomials
introduced in \cite{PXY}.  

Under certain assumptions, we use another Lefschetz argument to interpret our $Z$-polynomials a Poincar\'e polynomials
for the global intersection cohomology of the closure of a stratum in our stratified variety.  In particular, in the case of the Bruhat
poset of a Weyl group, the $Z$-polynomials are intersection cohomology Poincar\'e polynomials of Richardson varieties
(Theorem \ref{richardson}); in the case of the lattice of flats of a hyperplane arrangements, they are intersection cohomology
Poincar\'e polynomials of arrangement Schubert varieties (Theorem \ref{Z-arr}); and in the case of the affine Grassmannian,
they are intersection cohomology Poincar\'e polynomials of closures of Schubert cells (Corollary \ref{Z-affine}).

It would be interesting to know whether the $Z$-polynomial of a rational polytope has a cohomological interpretation in terms of toric
varieties.  These polynomials are closely related to a family of polynomials defined by Batyrev and Borisov (Remark \ref{BB}),
but they are not quite the same.

\subsection{Things that this paper is not about}
There are many interesting questions about Kazhdan-Lusztig-Stanley polynomials that we will mention briefly here
but not address in the main part of the paper.
\begin{itemize}
\item By giving a cohomological interpretation of a Kazhdan-Lusztig-Stanley polynomial, one can infer that it has non-negative
coefficients.  There is a rich history of pursuing the non-negativity of certain classes Kazhdan-Lusztig polynomials in the absence of a geometric
interpretation.  This was achieved by Elias and Williamson for Kazhdan-Lusztig polynomials of Coxeter groups that are not Weyl groups
\cite[Conjecture 1.2(1)]{EW14} and by Karu for polytopes that are not rational \cite[Theorem 0.1]{Karu} (see also \cite[Theorem 2.4(b)]{Braden-CICF}).
Braden, Huh, Matherne, Wang, and the author are working to prove an analogous theorem for matroids that are not realizable by hyperplane arrangements.
\item For many specific classes of Kazhdan-Lusztig-Stanley polynomials, it is interesting to ask what polynomials can arise.
Polo proved that any polynomial with non-negative coefficients and constant term 1 is equal to a 
Kazhdan-Lusztig polynomial for a symmetric group \cite{Polo}.
In contrast, the $g$-polynomial of a polytope cannot have internal zeros \cite[Theorem 1.4]{Braden-CICF}.
If the polytope is simplicial, then the sequence of coefficients is an M-sequence \cite{S1}, and this is conjecturally the case
for all polytopes; see \cite[Section 1.2]{Braden-CICF} for a discussion of this conjecture. 
Kazhdan-Lusztig polynomials of matroids are conjectured to always be 
log concave with no internal zeros \cite[Conjecture 2.5]{EPW} and even real-rooted \cite[Conjecture 3.2]{kl-survey},
and a similar conjecture has been made for $Z$-polynomials of matroids \cite[Conjecture 5.1]{PXY}.
\item Classical Kazhdan-Lusztig polynomials were originally defined in terms of the Kazhdan-Lusztig basis for the Hecke algebra.
More generally, Du defines the notion of an {\bf IC basis} for a free $\Z[t,t^{-1}]$-module equipped with an involution \cite{Du}, 
and Brenti proves that this notion is essentially equivalent to the theory of Kazhdan-Lusztig-Stanley polynomials \cite[Theorem 3.2]{Brenti-IC}.
Multiplication in the Hecke algebra is compatible with the involution, which Brenti shows is a very special property
\cite[Theorem 4.1]{Brenti-IC}.  Furthermore, the structure constants for multiplication in the Kazhdan-Lusztig basis 
of the Hecke algebra are positive \cite[Conjecture 1.2(2)]{EW14}, and Du asks whether this holds in some greater generality \cite[Section 5]{Du}.
In the case of Kazhdan-Lusztig polynomials of matroids, a candidate algebra structure was described and positivity was conjectured \cite[Conjecture 4.2]{EPW},
but that conjecture turned out to be false (see Section 4.6 of the arXiv version).  It is unclear whether this conjecture could be salvaged
by changing the definition of the algebra structure, or more generally when a particular collection of Kazhdan-Lusztig-Stanley polynomials
comes equipped with a nice algebra structure on its associated module.
\end{itemize}

\vspace{\baselineskip}
\noindent
{\em Acknowledgments:}
This work was greatly influenced by conversations with many people, including Sara Billey,
Tom Braden, Ben Elias, Jacob Matherne, Victor Ostrik, Richard Stanley, Minh-Tam Trinh, 
Max Wakefield, Ben Webster, Alex Yong, and Ben Young.  The author is also grateful to the referee
for many helpful comments.
The author is supported by NSF grant DMS-1565036.

\section{Combinatorics}\label{sec:combinatorics}
We begin by reviewing the combinatorial theory of Kazhdan-Lusztig-Stanley polynomials,
which was introduced in \cite[Section 6]{Stanley-h} and further developed in \cite{Dyer, Brenti-twisted, Brenti-IC}.
We also introduce $Z$-polynomials (Section \ref{sec:Z}) and study their basic properties.

\subsection{The incidence algebra}
Let $P$ be a poset.
%  and $r:P\to\Z$ a function.  
We say that $P$ is {\bf locally finite} if, for all $x\leq z\in P$,
the set $$[x,z] := \{y\in P\mid x\leq y\leq z\}$$ is finite.  Let $$I(P) := \prod_{x\leq y} \Z[t].$$
For any $f\in I(P)$ and $x<y\in P$, let $f_{xy}(t)\in \Z[t]$ denote the corresponding component of $f$.
If $P$ is locally finite, then $I(P)$ admits a ring structure with product given by convolution:
$$(fg)_{xz}(t) := \sum_{x\leq y\leq z} f_{xy}(t)g_{yz}(t).$$
The identity element is the function $\delta\in I(P)$ with the property that $\delta_{xy} = 1$ if $x=y$ and 0 otherwise.

Let $r\in I(P)$ be a function satisfying the following conditions:
\begin{itemize} 
\item $r_{xy}\in\Z\subset\Z[t]$ for all $x\leq y\in P$ (we will refer to $r_{xy}(t)$ simply as $r_{xy}$)
\item if $x< y$, then $r_{xy}>0$
\item if $x\leq y\leq z$, then $r_{xy} + r_{yz} = r_{xz}$.
\end{itemize}
Such a function is called a {\bf weak rank function} \cite[Section 2]{Brenti-twisted}.  
We will use the terminology {\bf weakly ranked poset}
to refer to a locally finite poset equipped with a weak rank function, and we will suppress $r$ from the notation
when there is no possibility for confusion.

For any weakly ranked poset $P$, let $\scrI(P)\subset I(P)$ denote the subring of functions $f$ with the property that 
the degree of $f_{xy}(t)$ is less than or equal to $r_{xy}$ for all $x\leq y$.
The ring $\scrI(P)$ admits an involution $f\mapsto\bar f$ defined by the formula
$$\bar f_{xy}(t) := t^{r_{xy}}f_{xy}(t^{-1}).$$

\begin{lemma}\label{inverses}
An element $f\in I(P)$ has an inverse (left or right) if and only if $f_{xx}(t) = \pm 1$ for all $x\in P$.
In this case, the left and right inverses are unique and they coincide.  If $f\in\scrI(P)\subset I(P)$ is invertible, then $f^{-1}\in\scrI(P)$.
\end{lemma}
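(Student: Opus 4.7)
The plan is to treat the four assertions separately: the necessity of $f_{xx}(t)=\pm1$, the existence of one-sided inverses under this condition, the equality and uniqueness of the two inverses, and the preservation of $\scrI(P)$.

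For necessity, suppose $g$ is a right inverse of $f$. Then $(fg)_{xx} = \sum_{x\leq y\leq x} f_{xy}g_{yx} = f_{xx}g_{xx} = 1$ in $\Z[t]$. Since $\Z[t]$ is an integral domain whose units are $\pm 1$, this forces $f_{xx}(t)\in\{\pm 1\}$. The same argument handles the left-inverse case.

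For existence, I would construct the inverse explicitly by recursion on the interval $[x,y]$, using local finiteness. Writing $\ep_x := f_{xx}(t) \in \{\pm 1\}$, set $g_{xx} := \ep_x$, and for $x<y$ demand $(fg)_{xy}=0$, which rearranges to
\[ g_{xy} = -\ep_x \sum_{x<z\leq y} f_{xz}\, g_{zy}. \]
This defines $g_{xy}$ in terms of $g_{zy}$ for $z\in (x,y]$, so the recursion terminates after finitely many steps because $[x,y]$ is finite; this produces a right inverse $g$. An entirely symmetric recursion, with the sum running over $x\leq z<y$ and starting from $g'_{yy}=\ep_y$, produces a left inverse $g'$.

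Given existence of both one-sided inverses, the usual one-line argument applies: using associativity of convolution in $I(P)$ (which holds by the standard interchange of sums over a finite interval),
\[ g' = g'\delta = g'(fg) = (g'f)g = \delta g = g, \]
so the two inverses coincide. Uniqueness on either side follows by the same trick applied to any other putative one-sided inverse. The main conceptual point, and the only step requiring care, is ensuring that the recursion is truly well-founded; this is where local finiteness of $P$ is used essentially.

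Finally, to show that $\scrI(P)$ is closed under inversion, I would prove by induction on the (finite) cardinality of $[x,y]$ that the $g_{xy}$ constructed above satisfies $\deg g_{xy}\leq r_{xy}$. The base case $g_{yy}=\ep_y$ has degree $0=r_{yy}$. For the inductive step, each summand in the recursion has degree at most $\deg f_{xz} + \deg g_{zy} \leq r_{xz}+r_{zy} = r_{xy}$ using the additivity of the weak rank function and the inductive hypothesis, so $\deg g_{xy}\leq r_{xy}$ as required. I do not anticipate any genuine obstacle; the subtlety, if any, lies in being careful that the recursion for $g_{xy}$ uses only values $g_{zy}$ with $z$ strictly larger than $x$ in $[x,y]$, so the induction is on a well-defined finite poset.
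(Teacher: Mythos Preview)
Your proof is correct and follows essentially the same approach as the paper: the same recursion for one-sided inverses, the same degree-bound induction for closure of $\scrI(P)$, and a standard associativity sandwich for coincidence. The only minor variation is in that last step: you construct both a left and a right inverse of $f$ and sandwich them, whereas the paper instead takes a right inverse $g$ of $f$, builds a right inverse $h$ of $g$, and shows $h=f$; both are routine monoid arguments and neither offers a real advantage over the other.
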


\begin{proof}
An element $g$ is a right inverse to $f$ if and only if $g_{xx}(t) = f_{xx}(t)^{-1}$ and 
$$f_{xx}(t) g_{xz}(t) = - \sum_{x<y\leq z} f_{xy}(t)g_{yz}(t)$$
for all $x<z$.  The first equation has a solution if and only if $f_{xx}(t) = \pm 1$, 
in which case the second equation also has a unique solution.
If $f\in\scrI(P)$, it is clear that $g\in\scrI(P)$, as well.  The argument for left inverses is identical, 
so it remains only to show that left and right inverses coincide.

Let $g$ be right inverse to $f$.  Then $g$ is also left inverse to some function, which we will denote $h$.  We then have
$$f = f\delta = f(gh) = (fg)h = \delta h = h,$$
so $g$ is left inverse to $f$, as well.
\end{proof}

\subsection{Right and left KLS-functions}
An element $\kappa\in\scrI(P)$ is called a {\bf \boldmath{$P$}-kernel} if $\kappa_{xx}(t) = 1$ for all $x\in P$ and $\kappa^{-1} = \bar \kappa$.
Let $$\Ih(P) := \Big\{f\in \scrI(P)\;\Big{|}\; \text{$f_{xx}(t) = 1$ for all $x\in P$ and $\deg f_{xy}(t)<r_{xy}/2$ for all $x<y\in P$}\Big\}.$$
Various versions of the following theorem appear in \cite[Corollary 6.7]{Stanley-h}, \cite[Proposition 1.2]{Dyer}, and 
\cite[Theorem 6.2]{Brenti-twisted}.
% We provide a full proof here for completeness.

\begin{theorem}\label{thm:KL}
If $\kappa\in\scrI(P)$ is a P-kernel, there exists a unique pair of functions $f,g\in\Ih(P)$ such that
$\bar f = \kappa f$ and $\bar g = g\kappa$.
\end{theorem}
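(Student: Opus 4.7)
The plan is to establish existence and uniqueness of $f$ by induction on $r_{xy}$, constructing the polynomial $f_{xy}(t)$ one interval at a time; the construction of $g$ will be entirely analogous, so I focus on $f$. The base case $x = y$ is forced by $f_{xx} = 1$. For the inductive step, I extract the $xy$-component of $\bar f = \kappa f$ and split off the diagonal term $z = x$ (which contributes $\kappa_{xx} f_{xy} = f_{xy}$) to obtain the recursion
$$t^{r_{xy}} f_{xy}(t^{-1}) - f_{xy}(t) \;=\; \sum_{x < z \leq y} \kappa_{xz}(t)\, f_{zy}(t) \;=:\; G_{xy}(t).$$
By induction the right-hand side is already determined, since $r_{zy} < r_{xy}$ whenever $x < z$.

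The next step is a purely formal observation: given any polynomial $G \in \Z[t]$, the equation $\bar h - h = G$ admits a solution $h$ with $\deg h < r_{xy}/2$ if and only if $G$ is anti-invariant under the bar involution on the interval $[x,y]$, meaning $t^{r_{xy}} G(t^{-1}) = -G(t)$, and in that case the solution is unique. Indeed, anti-invariance forces the middle coefficient (present only when $r_{xy}$ is even) to vanish and relates the degree-$<r_{xy}/2$ part of $G$ to the degree-$>r_{xy}/2$ part via the involution; one then simply takes $h$ to be the negative of the truncation of $G$ to degrees below $r_{xy}/2$. So verifying the anti-invariance of $G_{xy}$ will complete the induction and yield both existence and uniqueness of $f$.

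The main obstacle, and the only nontrivial computation, is therefore to prove $t^{r_{xy}} G_{xy}(t^{-1}) = -G_{xy}(t)$. This is where both the inductive hypothesis and the $P$-kernel identity $\bar\kappa\, \kappa = \delta$ enter. Applying the bar involution to the definition of $G_{xy}$, and using $r_{xz} + r_{zy} = r_{xy}$ to pair up the exponents, gives
$$t^{r_{xy}} G_{xy}(t^{-1}) \;=\; \sum_{x < z \leq y} \bar\kappa_{xz}(t)\, \bar f_{zy}(t).$$
Substituting the inductive identity $\bar f_{zy} = (\kappa f)_{zy} = \sum_{z \leq w \leq y} \kappa_{zw}\, f_{wy}$ and exchanging the order of summation produces the inner sum $\sum_{x < z \leq w} \bar\kappa_{xz}\,\kappa_{zw}$, which by $\bar\kappa\, \kappa = \delta$ equals $\delta_{xw} - \kappa_{xw}$ (using $\bar\kappa_{xx} = 1$). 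For $x < w \leq y$ the term $\delta_{xw}$ vanishes, so the inner sum collapses to $-\kappa_{xw}$, and hence $t^{r_{xy}} G_{xy}(t^{-1}) = -G_{xy}(t)$, as needed.

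The argument for $g$ is identical, replacing $\bar f = \kappa f$ by $\bar g = g\kappa$, splitting off the $z = y$ diagonal term instead of $z = x$, and invoking $\kappa\,\bar\kappa = \delta$ in place of $\bar\kappa\,\kappa = \delta$ (these coincide by Lemma \ref{inverses}). The inductive bookkeeping is again controlled by $r_{xy}$, and the anti-invariance computation is the mirror image of the one above.
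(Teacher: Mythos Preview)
Your proposal is correct and takes essentially the same approach as the paper's proof: both set up the recursion $\bar f_{xy} - f_{xy} = \sum_{x<z\leq y}\kappa_{xz}f_{zy}$, reduce existence and uniqueness to the anti-invariance of the right-hand side under the bar involution, and verify this anti-invariance via the identity $\bar\kappa\,\kappa=\delta$ together with the inductive hypothesis. The only cosmetic difference is that you phrase the induction on $r_{xy}$ while the paper fixes the top element and inducts downward on the bottom element; these are equivalent here.
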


\begin{proof}
We will prove existence and uniqueness of $f$; the proof for $g$ is identical.
Fix elements $x<w\in P$, and suppose that $f_{yw}(t)$ has been defined for all $x<y\leq w$.  
Let $$Q_{xw}(t) := \sum_{x<y\leq w} \kappa_{xy}(t)f_{yw}(t) \in \Z[t].$$
The equation $\bar f = \kappa f$ for the interval $[x,w]$ translates to $$\bar f_{xw}(t) - f_{xw}(t) = Q_{xw}(t).$$
It is clear that there is at most one polynomial $f_{xw}(t)$ of degree strictly less than $r_{xw}/2$ satisfying this equation.
The existence of such a polynomial is equivalent to the statement $$t^{r_{xw}}Q_{xw}(t^{-1}) = -Q_{xw}(t).$$
To prove this, we observe that
\begin{eqnarray*}
t^{r_{xw}}Q_{xw}(t^{-1}) &=& t^{r_{xw}}\sum_{x<y\leq w} \kappa_{xy}(t^{-1})f_{yw}(t^{-1})\\
&=& \sum_{x<y\leq w} t^{r_{xy}} \kappa_{xy}(t^{-1})t^{r_{yw}} f_{yw}(t^{-1})\\
&=& \sum_{x<y\leq w} \bar \kappa_{xy}(t)\bar f_{yw}(t)\\
&=& \sum_{x<y\leq w} \bar \kappa_{xy}(t) (\kappa f)_{yw}(t)\\
&=& \sum_{x<y\leq w} \bar \kappa_{xy}(t) \sum_{y\leq z\leq w} \kappa_{yz}(t) f_{zw}(t)\\
&=& \sum_{x<y\leq z\leq w} \bar \kappa_{xy}(t)\kappa_{yz}(t) f_{zw}(t)\\
&=& \sum_{x<z\leq w} f_{zw}(t) \sum_{x<y\leq z}\bar \kappa_{xy}(t)\kappa_{yz}(t)\\
&=& \sum_{x<z\leq w} f_{zw}(t) \Big((\bar \kappa \kappa)_{xz}(t) - \kappa_{xz}(t)\Big)\\
&=& - \sum_{x<z\leq w} \kappa_{xz}(t)f_{zw}(t)\\
&=& - Q_{xw}(t).
\end{eqnarray*}
Thus there is a unique choice of polynomial $f_{xw}(t)$ consistent with the equation $\bar f = \kappa f$ on the interval $[x,w]$.
\end{proof}

\begin{remark}\label{warning!}
Stanley \cite{Stanley-h} works only with the function $g$, as does Brenti in \cite{Brenti-twisted},
while Brenti later switches conventions and works with the function $f$ in \cite{Brenti-IC} 
(though he notes in a footnote that both functions exist). 
Dyer \cite{Dyer} defines versions of both functions, but with normalizations that differ from ours.
% We will be primarily interested in $f$, but both functions will enter into the definition of the $Z$-function.

Brenti refers to $g$ in \cite{Brenti-twisted} and $f$ in  \cite{Brenti-IC} as the Kazhdan-Lusztig-Stanley function associated with $\kappa$.
We will refer to $f$ as the {\bf right Kazhdan-Lusztig-Stanley function}
associated with $\kappa$, and to $g$ as the {\bf left Kazhdan-Lusztig-Stanley function}
associated with $\kappa$.
For any $x\leq y$, we will refer to the polynomial $f_{xy}(t)$ or $g_{xy}(t)$ as a (right or left)
{\bf Kazhdan-Lusztig-Stanley polynomial}.  We will write KLS as an abbreviation for Kazhdan-Lusztig-Stanley.
\end{remark}

\begin{remark}\label{opposite}
Given a locally finite weakly graded poset $P$, let $P^*$ denote the {\bf opposite} of $P$, which means 
that $y\leq x$ in $P^*$ if and only if $x\leq y$ in $P$, in which case $r^*_{yx} = r_{xy}$.
For any function $f\in\scrI(P)$, define $f^*\in\scrI(P^*)$ by putting $f^*_{yx}(t) := f_{xy}(t)$ for all $x\leq y\in P$.
If $\kappa$ is a $P$-kernel with right KLS-function $f$ and left KLS-function $g$, then $\kappa^*$ is a $P^*$-kernel
with left KLS-function $f^*$ and right KLS-function $g^*$.  Thus one can go between left and right KLS-polynomials by reversing the order on the poset.
\end{remark}

It will be convenient for us to have a converse to Theorem \ref{thm:KL}.  A version of this proposition appears in \cite[Theorem 6.5]{Stanley-h}.

\begin{proposition}\label{prop:R}
Suppose that $f\in \Ih(P)$.
Then 
\begin{enumerate}
\item $f$ is invertible.
\item $\bar f f^{-1}$ is a $P$-kernel with $f$ as its associated right KLS-function.
\item $f^{-1}\bar f$ is a $P$-kernel with $f$ as its associated left KLS-function.
\end{enumerate}
\end{proposition}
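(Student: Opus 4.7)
The plan is to verify all three claims by direct computation, with Lemma \ref{inverses} doing the work of (1) and the rest reducing to the uniqueness clause of Theorem \ref{thm:KL}.

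First I would dispose of (1): since $f \in \Ih(P)$ has $f_{xx}(t) = 1$ for every $x$, Lemma \ref{inverses} immediately produces a unique two-sided inverse $f^{-1}$, and moreover guarantees $f^{-1} \in \scrI(P)$.

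The crucial preparatory step is to record how the bar involution interacts with the ring structure of $\scrI(P)$. A short calculation from the definitions $\bar h_{xy}(t) = t^{r_{xy}} h_{xy}(t^{-1})$ and the additivity $r_{xy}+r_{yz}=r_{xz}$ shows that $\overline{hk} = \bar h\, \bar k$, i.e., the bar map is a ring homomorphism (not an anti-homomorphism). Combined with $\bar\delta = \delta$ (which uses $r_{xx}=0$, forced by the weak rank axiom), this gives the identity $\overline{f^{-1}} = (\bar f)^{\,-1}$, obtained by applying the bar to $f \cdot f^{-1} = \delta$.

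With that in hand, set $\kappa := \bar f f^{-1}$ for (2). It lies in $\scrI(P)$ because $\scrI(P)$ is a subring, and $\kappa_{xx}(t) = \bar f_{xx}(t)\, f^{-1}_{xx}(t) = 1$. To check the kernel condition $\kappa^{-1} = \bar\kappa$, I would compute
\[
\bar\kappa = \overline{\bar f f^{-1}} = f \cdot \overline{f^{-1}} = f (\bar f)^{\,-1} = (\bar f f^{-1})^{-1} = \kappa^{-1}.
\]
So $\kappa$ is a $P$-kernel. The defining equation for the right KLS-function is $\bar f = \kappa f$, which in our case is just $\bar f = (\bar f f^{-1}) f$, true by associativity. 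Since $f \in \Ih(P)$ by hypothesis, the uniqueness clause of Theorem \ref{thm:KL} identifies $f$ as the right KLS-function of $\kappa$.

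Claim (3) is entirely symmetric: set $\kappa' := f^{-1} \bar f$, note $\bar{\kappa'} = \overline{f^{-1}} \cdot f = (\bar f)^{-1} f = (\kappa')^{-1}$ and $\kappa'_{xx}(t) = 1$, so $\kappa'$ is a $P$-kernel; then $g\kappa' = f \cdot f^{-1}\bar f = \bar f = \bar g$ matches the defining equation for the left KLS-function of $\kappa'$, and uniqueness in Theorem \ref{thm:KL} finishes. There is no real obstacle here beyond correctly handling the bar-ring-homomorphism identity $\overline{f^{-1}} = (\bar f)^{-1}$; everything else is a one-line manipulation, and the role of the hypothesis $f \in \Ih(P)$ (rather than merely $f \in \scrI(P)$ with unit diagonal) is only to feed into the uniqueness statement of Theorem \ref{thm:KL}.
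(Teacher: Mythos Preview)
Your proof is correct and follows essentially the same approach as the paper: invoke Lemma~\ref{inverses} for invertibility, verify $\overline{\bar f f^{-1}} = (\bar f f^{-1})^{-1}$ to get the kernel property, and then appeal to the uniqueness in Theorem~\ref{thm:KL}. The only difference is that you make explicit the ring-homomorphism property of the bar involution and the consequence $\overline{f^{-1}} = (\bar f)^{-1}$, which the paper uses tacitly in the line $(\bar f f^{-1})^{-1} = f\bar f^{-1} = \overline{\bar f f^{-1}}$.
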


\begin{proof}
By Lemma \ref{inverses}, $f$ is invertible.  We have $(\bar f f^{-1})^{-1} = f\bar{f}^{-1} = \overline{\bar f f^{-1}}$, so $\bar f f^{-1}$ is a $P$-kernel.
Since $\bar f = \bar f (f^{-1} f) = (\bar f f^{-1}) f$, the uniqueness part of Theorem \ref{thm:KL} tells us that $f$ is equal to the associated right KLS-function.  
The last statement follows similarly.
\end{proof}

\subsection{The \boldmath{$Z$}-function}\label{sec:Z}
We will call a function $Z\in\scrI(P)$ {\bf symmetric} if $\bar Z = Z$.
Let $\kappa$ be a $P$-kernel with right KLS-function $f$ and left KLS-function $g$.  Let $Z := g\kappa f\in\scrI(P)$;
we will refer to $Z$ as the {\bf \boldmath{$Z$}-function} associated with $\kappa$, and to each $Z_{xy}(t)$ as a {\bf \boldmath{$Z$}-polynomial}.

\begin{proposition}\label{prop:Z}
We have $Z = \bar g f = g\bar f$.  In particular, $Z$ is symmetric.
\end{proposition}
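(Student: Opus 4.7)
The plan is to derive the identities $Z = \bar g f$ and $Z = g\bar f$ directly from the defining equations of the left and right KLS-functions, and then deduce symmetry by pushing the bar operation through the product $g\kappa f$.

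First I would prove the two equalities by pure substitution, using the characterizations from Theorem~\ref{thm:KL}. Since the right KLS-function $f$ satisfies $\bar f = \kappa f$, replacing $\kappa f$ by $\bar f$ in the defining formula $Z = g\kappa f$ immediately yields $Z = g\bar f$. Symmetrically, since the left KLS-function $g$ satisfies $\bar g = g\kappa$, replacing $g\kappa$ by $\bar g$ in $Z = g\kappa f$ gives $Z = \bar g f$. No induction or further KLS theory is needed for this step—only the two defining equations.

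The remaining content is the symmetry $\bar Z = Z$, and for this the key structural observation is that the bar involution is a \emph{ring homomorphism} of $\scrI(P)$, that is, $\overline{hh'} = \bar h\,\bar h'$ for all $h,h'\in\scrI(P)$. This is essentially the computation carried out inside the proof of Theorem~\ref{thm:KL}: expanding $(hh')_{xz}(t^{-1})$, multiplying by $t^{r_{xz}}$, and distributing using the rank additivity $r_{xy}+r_{yz}=r_{xz}$ converts each term into $\bar h_{xy}(t)\,\bar h'_{yz}(t)$. Although this multiplicativity is not isolated as a lemma in the excerpt, it is implicit and is the only thing that needs verifying.

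Granting that, the symmetry is a three-line manipulation: $\bar Z = \overline{g\kappa f} = \bar g\,\bar\kappa\,\bar f$, and I can finish in either of two equivalent ways. Using the $P$-kernel condition $\bar\kappa = \kappa^{-1}$ together with $\bar f = \kappa f$ collapses $\bar\kappa\,\bar f = \bar\kappa\,\kappa f = f$, giving $\bar Z = \bar g f = Z$; alternatively, combining $\bar g\,\bar\kappa = g\kappa\kappa^{-1} = g$ with the already-derived $Z = g\bar f$ yields the same conclusion. The main (and essentially only) obstacle is confirming the multiplicativity of the bar map; after that, the proposition is a formal consequence of the $P$-kernel identity and the two defining equations from Theorem~\ref{thm:KL}.
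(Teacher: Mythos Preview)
Your proof is correct and follows the same approach as the paper. The paper's argument is even terser: it records only the two substitutions $Z = g\kappa f = \bar g f$ (from $\bar g = g\kappa$) and $Z = g\kappa f = g\bar f$ (from $\bar f = \kappa f$), and leaves the symmetry implicit, since $\bar Z = \overline{\bar g f} = g\bar f = Z$ follows immediately once one knows the bar involution is multiplicative---exactly the point you singled out.
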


\begin{proof}
Since $\bar g = g\kappa$, we have $Z = g\kappa f = \bar g f$.  
Since $\bar f = \kappa f$, we have $Z = g\kappa f = g\bar f$.
\end{proof}

We have the following converse to Proposition \ref{prop:Z}.

\begin{proposition}\label{it was g}
Suppose that $f,g\in\Ih(P)$.  Then $f$ and $g$ are the right and left KLS-functions
for a single $P$-kernel $\kappa$ if and only if $\bar g f$ is symmetric.
\end{proposition}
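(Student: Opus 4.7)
The forward direction is immediate from Proposition \ref{prop:Z}: if $\kappa$ is a $P$-kernel with right KLS-function $f$ and left KLS-function $g$, then $\bar g f$ equals the $Z$-function associated with $\kappa$, which is symmetric. So the substance is in the converse.

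For the converse, the plan is to construct two candidate $P$-kernels, one from $f$ alone and one from $g$ alone, using Proposition \ref{prop:R}, and then use the symmetry hypothesis to show that they coincide. Explicitly, by Proposition \ref{prop:R}(2), $\kappa_1 := \bar f f^{-1}$ is a $P$-kernel whose right KLS-function is $f$; by Proposition \ref{prop:R}(3) applied to $g$, $\kappa_2 := g^{-1}\bar g$ is a $P$-kernel whose left KLS-function is $g$. If $\kappa_1 = \kappa_2$, then this common element $\kappa$ has $f$ and $g$ as its right and left KLS-functions, finishing the proof. Since $f$ and $g$ are invertible by Lemma \ref{inverses}, the equation $\bar f f^{-1} = g^{-1}\bar g$ is equivalent, after multiplying on the left by $g$ and on the right by $f$, to $g\bar f = \bar g f$.

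To connect this to the hypothesis, I would note that the involution on $\scrI(P)$ is multiplicative, i.e.\ $\overline{hk} = \bar h \bar k$ for all $h,k\in\scrI(P)$. This is a direct unwinding of the definition $\bar h_{xy}(t) = t^{r_{xy}}h_{xy}(t^{-1})$ using the additivity $r_{xy} + r_{yz} = r_{xz}$ of the weak rank function, and the same identity is already used silently in the display chain inside the proof of Theorem \ref{thm:KL}. Granting this, $\overline{\bar g f} = \bar{\bar g}\cdot \bar f = g\bar f$, so the symmetry of $\bar g f$ is precisely the identity $g\bar f = \bar g f$ that we need. The main (and essentially only) obstacle is the bookkeeping with the involution; once multiplicativity of the bar is in hand, the rest is a short algebraic manipulation, and no inductive argument is required since Proposition \ref{prop:R} already packages the induction.
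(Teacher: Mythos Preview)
Your proposal is correct and follows essentially the same approach as the paper: define the two candidate kernels $\kappa_f=\bar f f^{-1}$ and $\kappa_g=g^{-1}\bar g$ via Proposition~\ref{prop:R}, and reduce the question to the identity $g\bar f=\bar g f$. The only cosmetic difference is that the paper handles both directions simultaneously by writing $\bar g f=g\kappa_g f$ and $g\bar f=g\kappa_f f$ and cancelling, whereas you invoke Proposition~\ref{prop:Z} for the forward direction and make explicit the multiplicativity of the bar involution (which the paper uses silently).
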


\begin{proof}
Let $\kappa_f := \bar f f^{-1}$ and $\kappa_g := g^{-1}\bar g$.  By Proposition \ref{prop:R},
$f$ is the right KLS-function of $\kappa_f$ and $g$ is the left KLS-function of $\kappa_g$.
Then $\bar g f = g\kappa_g f$ and $g\bar f = g\kappa_f f$.  Multiplying on the left by $g^{-1}$
and on the right by $f^{-1}$, we see that these two functions are the same if and only if $\kappa_f = \kappa_g$.
\end{proof}

The following version of Proposition \ref{it was g} will be useful in Section \ref{sec:Z-geom}.
It allows us to relax both the symmetry assumption and the conclusion of Proposition \ref{it was g}.

\begin{proposition}\label{weak g}
Let $\kappa$ be a $P$-kernel, and let $f,g\in\Ih(P)$ be the associated 
right and left KLS-functions.  Suppose we are given $x\in P$ and
$h\in\Ih(P)$ such that, for all $z\geq x$, we have $(\bar h f)_{xz}(t) = (h\bar f)_{xz}(t)$.
Then for all $z\geq x$, $h_{xz}(t) = g_{xz}(t)$.
\end{proposition}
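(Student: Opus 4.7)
The plan is to induct on the rank $r_{xz}$. The base case $r_{xz}=0$ forces $z=x$, and the conclusion is immediate because $h_{xx}(t)=1=g_{xx}(t)$. For the inductive step, fix $z>x$ and assume $h_{xy}(t)=g_{xy}(t)$ for every $y$ with $x\leq y$ and $r_{xy}<r_{xz}$.

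I would first expand the hypothesis $(\bar h f)_{xz}(t)=(h\bar f)_{xz}(t)$ as convolutions over $[x,z]$, peeling off the $y=x$ and $y=z$ boundary terms, which simplify because the diagonal entries of $f$, $\bar f$, $h$, $\bar h$ are all $1$. Invoking the inductive hypothesis to replace $h_{xy}$ by $g_{xy}$ in each term with $x<y<z$, the hypothesis rearranges into an identity expressing $\bar h_{xz}(t) - h_{xz}(t)$ as $\bar f_{xz}(t)-f_{xz}(t)$ plus an explicit sum involving only $g_{xy}$, $\bar g_{xy}$, $f_{yz}$, $\bar f_{yz}$ for $x<y<z$.

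Next I would perform the same expansion on the identity $\bar g f = g\bar f$ supplied by Proposition \ref{prop:Z}. This produces the analogous identity for $\bar g_{xz}(t)-g_{xz}(t)$, and crucially its right-hand side is exactly the same combination that appeared in the previous step. Subtracting the two identities causes the right-hand side to cancel entirely, leaving
\[
\bar h_{xz}(t)-h_{xz}(t) \;=\; \bar g_{xz}(t)-g_{xz}(t),
\]
equivalently $\overline{h_{xz}-g_{xz}} = h_{xz}-g_{xz}$.

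The final step is the degree argument that drives the uniqueness half of Theorem \ref{thm:KL}: since both $h$ and $g$ lie in $\Ih(P)$, the polynomial $h_{xz}(t)-g_{xz}(t)$ has degree strictly less than $r_{xz}/2$, but the bar involution sends any such polynomial to one whose nonzero terms live in degrees strictly greater than $r_{xz}/2$; bar-invariance therefore forces it to vanish. This yields $h_{xz}(t)=g_{xz}(t)$ and closes the induction. I do not expect a genuine obstacle: the argument is essentially the uniqueness proof of Theorem \ref{thm:KL} routed through Proposition \ref{prop:Z}. The one point to verify carefully is that the two convolution expansions really do produce matching right-hand sides, which is precisely what the inductive hypothesis $h_{xy}=g_{xy}$ for $x<y<z$ is used to ensure.
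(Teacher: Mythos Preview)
Your proposal is correct and follows essentially the same argument as the paper: induct on $r_{xz}$, expand both $(\bar h f)_{xz}=(h\bar f)_{xz}$ and $(\bar g f)_{xz}=(g\bar f)_{xz}$ (the latter from Proposition~\ref{prop:Z}), subtract, apply the inductive hypothesis so that only the $y=z$ terms survive, and conclude by the degree constraint coming from $\Ih(P)$. The only cosmetic difference is that the paper subtracts the two full convolution identities directly rather than first isolating $\bar h_{xz}-h_{xz}$ and $\bar g_{xz}-g_{xz}$, but the content is identical.
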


\begin{proof}
We proceed by induction on $r_{xz}$.  When $z=x$, we have $h_{xx}(t) = 1 = g_{xx}(t)$.
Now assume that the statement holds for all $y$ such that $x\leq y < z$.
We have
$$\sum_{x\leq y\leq z} \bar g_{xy}(t)f_{yz}(t) =
(\bar g f)_{xz}(t) = (g\bar f)_{xz}(t) = \sum_{x\leq y\leq z} g_{xy}(t)\bar f_{yz}(t)$$
and
$$\sum_{x\leq y\leq z} \bar h_{xy}(t)f_{yz}(t) =
(\bar h f)_{xz}(t) = (h\bar f)_{xz}(t) = \sum_{x\leq y\leq z} h_{xy}(t)\bar f_{yz}(t).$$
Subtracting these two equations and applying our inductive hypothesis, we have
$$\bar g_{xz}(t) - \bar h_{xz}(t) = t^{r_{xz}}\big(g_{xz}(t) - h_{xz}(t)\big).$$
Since $\deg(g_{xz}-h_{xz}) < r_{xz}/2$, this implies that $g_{xz}(t) = h_{xz}(t)$.
\end{proof}

\begin{proposition}\label{Z dual}
Let $\kappa\in\scrI(P)$ be a $P$-kernel, and let $P^*$ be the opposite of $P$.
Then $Z^*\in\scrI(P^*)$ is the $Z$-polynomial associated with the $P^*$-kernel $\kappa^*$.
\end{proposition}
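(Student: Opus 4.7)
The plan is to unwind both sides of the claimed equality using the definition of the $Z$-function together with the behavior of the incidence algebra under the opposite operation, and then invoke Remark \ref{opposite} to identify the factors.

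First I would verify the basic compatibility between the $*$ operation and convolution: for any $a,b\in\scrI(P)$,
$$(ab)^*_{zx}(t) = (ab)_{xz}(t) = \sum_{x\leq y\leq z} a_{xy}(t)\,b_{yz}(t) = \sum_{z\leq_* y\leq_* x} b^*_{zy}(t)\,a^*_{yx}(t) = (b^*a^*)_{zx}(t),$$
so $(ab)^* = b^*a^*$ in $\scrI(P^*)$. This is the only nonformal computation needed and it is immediate from the definitions, so there is no real obstacle here.

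Next, let $f$ and $g$ be the right and left KLS-functions of $\kappa$, so by definition $Z = g\kappa f$. Applying the anti-homomorphism property twice yields
$$Z^* = (g\kappa f)^* = f^*\kappa^* g^*.$$
By Remark \ref{opposite}, $\kappa^*$ is a $P^*$-kernel whose left KLS-function is $f^*$ and whose right KLS-function is $g^*$. Therefore $f^*\kappa^* g^*$ is precisely the $Z$-function attached to the $P^*$-kernel $\kappa^*$, which is what we wanted to show.

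The proof is essentially formal: the only step that requires any verification is the identity $(ab)^* = b^*a^*$, and this is a one-line index manipulation. The substantive content is carried by Remark \ref{opposite}, which already records how the left and right KLS-functions swap roles under passage to the opposite poset.
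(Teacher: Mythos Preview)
Your proof is correct and follows essentially the same route as the paper: both arguments use Remark \ref{opposite} to identify $f^*$ and $g^*$ as the left and right KLS-functions of $\kappa^*$, and then observe that $Z^* = (g\kappa f)^* = f^*\kappa^* g^*$ is by definition the $Z$-function of $\kappa^*$. The only difference is that you spell out the anti-homomorphism identity $(ab)^* = b^*a^*$ explicitly, whereas the paper uses it silently.
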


\begin{proof}
By Remark \ref{opposite}, the left KLS-polynomial associated with $\kappa^*$ is $f^*$,
and the right KLS-polynomial is $g^*$.  Thus the $Z$-polynomial is $f^*\kappa^*g^* = (g\kappa f)^* = Z^*$.
\end{proof}

\begin{remark}
Let $\kappa$ be a $P$-kernel with right KLS-function $f$, left KLS-function $g$ and $Z$-function $Z$.
Proposition \ref{prop:R} says that, if you know $f$ or $g$, you can compute $\kappa$.  Similarly, we observe
that if you know $Z$, you can compute $f$ and $g$, and therefore $\kappa$.  
This can be proved inductively.  Indeed, assume that we can compute $f$ and $g$ on any interval strictly contained in $[x,z]$.
Then we have $$Z_{xz}(t) = \sum_{x\leq y\leq z} \bar g_{xy}(t) f_{yz}(t) = f_{xz}(t) + \bar g_{xz}(t) + \sum_{x< y< z} \bar g_{xy}(t) f_{yz}(t),$$
and therefore \begin{equation}\label{recover Z}f_{xz}(t) + \bar g_{xz}(t) = Z_{xz}(t) - \sum_{x< y< z} \bar g_{xy}(t) f_{yz}(t).\end{equation}
By our inductive hypothesis, we can compute the right-hand side, which determines the left-hand side.
Since $f,g\in\Ih(P)$, this determines $f_{xz}(t)$ and $g_{xz}(t)$ individually.

On the other hand, it is {\bf not} true that every symmetric function $Z\in\scrI(P)$ with $Z_{xy}(0) = 1$ for all $x\leq y\in P$
is the $Z$-function associated with some $P$-kernel.  This is because Equation \eqref{recover Z} cannot be solved
if $r_{xz}$ is even and the coefficient of $t^{r_{xz}/2}$ on the right hand side is nonzero.
\end{remark}

\subsection{Alternating kernels}
Given a function $h\in \scrI(P)$, we we define $\hat h\in\scrI(P)$ by the formula $\hat h_{xy}(t) := (-1)^{r_{xy}} h_{xy}(t)$.
The map $h\mapsto\hat h$ is an involution of the ring $\scrI(P)$ that commutes with the involution $h\mapsto\bar h$.
We will say that $h$ is {\bf alternating} if $\bar h = \hat h$.
A version of the following result appears in \cite[Corollary 8.3]{Stanley-h}.

\begin{proposition}\label{antiK}
Let $\kappa\in\scrI(P)$ be an alternating $P$-kernel, and let $f,g\in\Ih(P)$ be the associated right and left KLS-functions.
Then $\hat g = f^{-1}$ and $\hat f = g^{-1}$.
\end{proposition}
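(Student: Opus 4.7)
The plan is to prove $\hat g\cdot f=\delta$; by Lemma \ref{inverses} this yields $\hat g = f^{-1}$, and the parallel argument on $g\cdot\hat f$ yields $\hat f=g^{-1}$. First I would verify the ring-theoretic bookkeeping around the hat involution: using the additivity $r_{xz}=r_{xy}+r_{yz}$, the map $h\mapsto\hat h$ is an involutive ring homomorphism of $\scrI(P)$ that commutes with the bar involution. Combined with the alternating condition and the $P$-kernel identity $\bar\kappa=\kappa^{-1}$, this gives $\hat\kappa=\kappa^{-1}$. Applying hat to the defining equations $\bar f=\kappa f$ and $\bar g=g\kappa$ then produces
\[
\overline{\hat f}=\kappa^{-1}\hat f\and\overline{\hat g}=\hat g\,\kappa^{-1}.
\]

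Next I compute, using that bar is a ring homomorphism,
\[
\overline{\hat g\cdot f}=\overline{\hat g}\cdot\bar f=(\hat g\,\kappa^{-1})(\kappa f)=\hat g\cdot f,
\]
so $\hat g\cdot f$ is symmetric. It also lies in $\Ih(P)$: clearly $(\hat gf)_{xx}=1$, and for $x<z$, each summand $\hat g_{xy}(t)f_{yz}(t)$ in $(\hat gf)_{xz}(t)$ has degree strictly less than $r_{xy}/2+r_{yz}/2=r_{xz}/2$, once one treats the endpoint summands $y=x$ and $y=z$ separately using $g_{xx}=f_{zz}=1$.

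Finally, any symmetric $h\in\Ih(P)$ must equal $\delta$: for $x<y$, the equation $h_{xy}(t)=t^{r_{xy}}h_{xy}(t^{-1})$ would force the minimal-degree nonzero term of $h_{xy}$ to sit in degree $r_{xy}-\deg h_{xy}>r_{xy}/2$, incompatible with $\deg h_{xy}<r_{xy}/2$ unless $h_{xy}=0$. Thus $\hat g\cdot f=\delta$, and the symmetric computation $\overline{g\cdot\hat f}=\bar g\cdot\overline{\hat f}=(g\kappa)(\kappa^{-1}\hat f)=g\cdot\hat f$ handles the other half. The only step requiring care is the interaction of the two commuting involutions with convolution, arranged so that the cancellation $\kappa^{-1}\kappa=\delta$ lands in the middle of the product.
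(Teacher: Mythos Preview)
Your proposal is correct and follows essentially the same route as the paper's proof: both arguments show $\hat g f$ is symmetric via the cancellation $\kappa^{-1}\kappa=\delta$ (the paper writes $\bar\kappa\kappa$ instead of $\kappa^{-1}\kappa$, which is the same thing), then use the degree bound $\deg(\hat g f)_{xy}<r_{xy}/2$ to force $\hat g f=\delta$. Your write-up is a bit more explicit about why each summand obeys the degree bound and why a symmetric element of $\Ih(P)$ must be $\delta$, but the underlying argument is identical.
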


\begin{proof}
Since $\bar g = g\kappa$, we have $\hat{\bar{g}} = \hat g \hat \kappa = \hat g\bar\kappa$.
Then $$\overline{\hat g f} = \bar{\hat{g}} \bar f = \hat{\bar{g}}\bar f = \hat g\bar\kappa\kappa f = \hat g f,$$
thus $\hat g f$ is symmetric.  However, since $f,g\in\Ih(P)$, we have $\deg (\hat g f)_{xy}(t)< r_{xy}/2$ for all $x<y$,
so this implies that $(\hat g f)_{xy}(t)=0$ for all $x<y$.  On the other hand, $(\hat g f)_{xx}(t)=\hat g_{xx}(t)f_{xx}(t) = 1$.
Thus $\hat g f = \delta$, and therefore $\hat g = f^{-1}$.  The second statement follows immediately.
\end{proof}

\subsection{Examples}
We now discuss a number of examples of $P$-kernels along their associated KLS-functions and $Z$-functions.
All of these examples will be revisited in Section \ref{sec:examples}.

\begin{example}\label{ex:coxeter}
Let $W$ be a Coxeter group, equipped with the Bruhat order and the rank function given by the length of an element of $W$.
The classical {\bf \boldmath{$R$}-polynomials} $\{R_{vw}(t)\mid v\leq w\in W\}$ form a $W$-kernel, and the classical
{\bf Kazhdan-Lusztig polynomials} $\{f_{xy}(t)\mid v\leq w\in W\}$ are the associated right KLS-polynomials.
These polynomials were introduced by Kazhdan and Lusztig \cite{KL79}, and they were one of the main motivating examples in Stanley's work \cite[Example 6.9]{Stanley-h}.

If $W$ is finite, then there is a maximal element $w_0\in W$, and left multiplication by $w_0$ defines
an order-reversing bijection of $W$ with the property that, if $v\leq w$, then $R_{vw}(t) = R_{(w_0w)(w_0v)}(t)$
\cite[Lemma 11.3]{Lusztig-unequal}.  It follows from Remark \ref{opposite} that $g_{vw}(t) = f_{(w_0w)(w_0v)}(t)$.
In addition, $R$ is alternating \cite[Lemma 2.1(i)]{KL79}, hence Proposition \ref{antiK} tells us that $\hat g = f^{-1}$ and $\hat f = g^{-1}$.
\end{example}

\begin{example}\label{ex:char}
Let $P$ be any locally finite weakly ranked poset.  Define $\zeta\in\scrI(P)$ by the formula
$\zeta_{xy}(t) = 1$ for all $x\leq y\in P$.  The element $\mu := \zeta^{-1}\in\scrI(P)$
is called the {\bf M\"obius function}, and
the product $\chi := \mu\bar\zeta = \zeta^{-1}\bar\zeta$ is called the {\bf characteristic function} of $P$.
We then have $\chi^{-1} = \bar\zeta^{-1}\zeta = \bar\chi$, so $\chi$ is a $P$-kernel.
Proposition \ref{prop:R}(3) tells us that the associated left KLS-function is $\zeta$; this was observed by Stanley in \cite[Example 6.8]{Stanley-h}.
However, the associated right KLS-function $f$ can be much more interesting!  (In particular, $\chi$ is generally not alternating.)
For example, if $P$ is the lattice of flats of a matroid $M$ with the usual weak rank function, 
with minimum element 0 and maximum element 1, then $f_{01}(t)$
is the {\bf Kazhdan-Lusztig polynomial of \boldmath{$M$}} as defined in \cite{EPW}, and $Z_{01}(t)$
is the {\bf \boldmath{$Z$}-polynomial of \boldmath{$M$}} as defined in \cite{PXY}.
In general, the coefficients of $f_{xy}(t)$ can be expressed as alternating sums of multi-indexed Whitney numbers for the interval $[x,y]\subset P$;
see \cite[Corollary 6.5]{Brenti-twisted}, \cite[Theorem 5.1]{Wak}, and \cite[Theorem 3.3]{PXY} for three different formulations of this result.
\end{example}

\begin{example}\label{ex:Eulerian}
Let $P$ be any locally finite weakly ranked poset.
Define $\la\in\scrI(P)$ by the formula $\la_{xy}(t) = (t-1)^{r_{xy}}$ for all $x\leq y\in P$.
The weakly ranked poset $P$ is called {\bf locally Eulerian} if $\mu_{xy}(t) = (-1)^{r_{xy}}$ for all $x\leq y\in P$,
which is equivalent to the condition that $\la$ is a $P$-kernel \cite[Proposition 7.1]{Stanley-h}.
The poset of faces of a polytope, with weak rank function given by relative dimension
(where $\dim \emptyset = -1$), is Eulerian.  More generally, any fan is an Eulerian poset.

Let $\Delta$ be a polytope, let $P$ be the poset of faces of $\Delta$, and let $f$ and $g$ be the associated right and left KLS-functions.
Then $g_{\emptyset\Delta}(t)$ is called the {\bf \boldmath{$g$}-polynomial} of $\Delta$ \cite[Example 7.2]{Stanley-h}.
Since the dual polytope $\Delta^*$ has the property that its face poset is opposite to $P$,
and since $\lambda$ depends only on the weak rank function, Remark \ref{opposite} tells us that the right KLS-polynomial
$f_{\emptyset\Delta}(t)$ is equal to the $g$-polynomial of $\Delta^*$.  On the other hand, since $\la$ is clearly alternating,
Proposition \ref{antiK} tells us that $\hat g = f^{-1}$ and $\hat f = g^{-1}$ \cite[Corollary 8.3]{Stanley-h}.
\end{example}

\begin{remark}\label{BB}
For $P$ locally Eulerian,
Batyrev and Borisov define an element $B \in\prod_{x\leq y\in P} \Z[u,v]$ \cite[Definition 2.7]{Bat-Bor}.
Let $B'\in \scrI(P)$ be the function obtained from $B$ by setting $u=-t$ and $v=-1$.
The defining equation for $B$ transforms into the equation $B'\bar{\hat{f}} = f$.  Using the fact that $\hat f = g^{-1}$,
this means that $B' = f\bar g$.  Thus $B'$ is similar to $Z = \bar g f$, but it is not quite the same.
In particular, $B'$ need not be symmetric.
\end{remark}

\begin{example}\label{ex:hypertoric}
Let $M$ be a matroid with lattice of flats $L$.
Let $r\in\scrI(L)$ be the usual weak rank function, and let $\chi\in\scrI(L)$ be the characteristic function.
In this example, we will be interested in the weakly ranked poset $(L,2r)$, where $2r$ is 2 times the usual weak rank
function.

Define $\kappa\in\scrI(L,2r)$ by the following formula:
$$\kappa_{FH}(t) := (t-1)^{r_{FH}}\sum_{F\leq G \leq H}(-1)^{r_{FG}}\chi_{FG}(-1)\chi_{GH}(t).
$$
Define $\hbc\in\Ih(L,2r)$ by letting $$\hbc_{FG}(t) := (-t)^{r_{FG}}\chi_{FG}(1-t^{-1})$$ 
be the $h$-polynomial of the broken circuit complex
of $M_G^F$, where $M_G^F$ is the matroid on $G\smallsetminus F$ 
whose lattice of flats is isomorphic to $[F,G]\subset L$.

\begin{proposition}\label{hypertoric KL}
The function $\kappa$ is an $(L,2r)$-kernel, and $\hbc$ is its associated left KLS-function.
\end{proposition}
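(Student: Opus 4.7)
The plan is to apply Proposition \ref{prop:R}(3): once we know that $\hbc\in \Ih(L,2r)$, that proposition guarantees that $\hbc^{-1}\bar{\hbc}$ is an $(L,2r)$-kernel with $\hbc$ as its associated left KLS-function. Proving the proposition then reduces to the single identity $\kappa=\hbc^{-1}\bar{\hbc}$, or equivalently $\bar{\hbc}=\hbc\kappa$ in $\scrI(L,2r)$.

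First I would verify $\hbc\in\Ih(L,2r)$. Clearly $\hbc_{FF}(t)=1$, so only the degree bound requires work. The key observation is that $\chi_{FG}(1)=0$ whenever $F<G$: indeed $\chi_{FG}(1)=\sum_{F\leq F'\leq G}\mu_{FF'}=(\mu\zeta)_{FG}=\delta_{FG}$. Thus $(t-1)$ divides $\chi_{FG}(t)$, which forces $(-t)^{r_{FG}}\chi_{FG}(1-t^{-1})$ to be a polynomial of degree at most $r_{FG}-1$, strictly less than $(2r_{FG})/2$, as required.

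To establish $\bar{\hbc}=\hbc\kappa$, I would expand both sides using $\chi=\mu\bar\zeta$. A short manipulation, repeatedly invoking $(-t)(1-t)=t^2-t$ and $(-t)(1-t^{-1})=1-t$, yields
$$\bar{\hbc}_{FH}(t)=\sum_{F\leq F'\leq H}\mu_{FF'}(-t)^{r_{FF'}}t^{r_{F'H}}(t-1)^{r_{F'H}}$$
and
$$(\hbc\kappa)_{FH}(t)=\sum_{F\leq F'\leq H}\mu_{FF'}(-t)^{r_{FF'}}(t-1)^{r_{F'H}}\,S_{F'H}(t),$$
where $S_{F'H}(t):=\sum_{F'\leq G\leq G'\leq H}(-1)^{r_{F'G}+r_{GG'}}\chi_{GG'}(-1)\chi_{G'H}(t)$. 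So it suffices to prove that $S_{F'H}(t)=t^{r_{F'H}}$ for every $F'\leq H$. Using $r_{F'G}+r_{GG'}=r_{F'G'}$ and swapping the order of summation, the inner sum becomes $(\zeta\chi)_{F'G'}(-1)$. Since $\chi=\mu\bar\zeta$ and $\zeta\mu=\delta$, we have $\zeta\chi=\bar\zeta$, so this equals $(-1)^{r_{F'G'}}$. The two factors of $(-1)^{r_{F'G'}}$ cancel, leaving $\sum_{F'\leq G'\leq H}\chi_{G'H}(t)=(\zeta\chi)_{F'H}(t)=t^{r_{F'H}}$.

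The main obstacle is the bookkeeping: one must rewrite products of $(-t)^a$, $(t-1)^b$, and $(1-t^{-1})^c$ repeatedly, and at each step verify that polynomial degrees are preserved, so that membership in $\Ih$ and $\scrI$ is not lost. Conceptually, however, the heart of the calculation is the single incidence-algebra identity $\zeta\chi=\bar\zeta$, which explains why the elaborate-looking definition of $\kappa$ is the unique one compatible with the choice of $\hbc$.
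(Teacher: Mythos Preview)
Your proof is correct and follows the same strategy as the paper: reduce to the identity $\bar{\hbc}=\hbc\,\kappa$ via Proposition~\ref{prop:R}(3), then verify it by expanding both sides and collapsing with M\"obius inversion. Your organization is a bit tidier---you package the two collapses as the incidence-algebra identity $\zeta\chi=\bar\zeta$ and you explicitly verify $\hbc\in\Ih(L,2r)$, whereas the paper fully expands into a chain $D\leq E\leq F\leq G\leq H\leq I\leq J$ and applies $\sum\mu=\delta$ directly---but the underlying argument is the same.
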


\begin{proof}
By Proposition \ref{prop:R}(3), it will suffice to show that $\overline{\hbc} = \hbc \kappa$.
We follow the argument in the proof of \cite[Theorem 4.3]{PW07}.  We will 
write $\mu_{FG}$ and $\delta_{FG}$ to denote the constant polynomials $\mu_{FG}(t)$ and $\delta_{FG}(t)$.  
For all $D\leq J$, we have
\begin{eqnarray*}
(\hbc\kappa)_{DJ}(t) &=& \sum_{D\leq F \leq J}\hbc_{DF}(t)\kappa_{FJ}(t)\\
&=& \sum_{D\leq F\leq H \leq J} (t-1)^{r_{FJ}}(-1)^{r_{FH}}\chi_{FH}(-1)\chi_{HJ}(t) 
(-t)^{r_{DF}}\chi_{DF}(1-t^{-1})\\
&=& \sum_{D\leq E\leq F\leq G\leq H \leq I\leq J}
(t-1)^{r_{FJ}}(-1)^{r_{FH}}\mu_{FG}(-1)^{r_{GH}}\mu_{HI} t^{r_{IJ}} (-t)^{r_{DF}}\mu_{DE}(1-t^{-1})^{r_{EF}}\\
&=& \sum_{D\leq E\leq F\leq G\leq H \leq I\leq J}
\mu_{DE}\mu_{FG}\mu_{HI}(-1)^{r_{DG}}t^{r_{DE}+ r_{IJ}}(t-1)^{r_{EJ}}\\
&=& \sum_{D\leq E\leq G \leq I\leq J}\mu_{DE}(-1)^{r_{DG}}t^{r_{DE}+ r_{IJ}}(t-1)^{r_{EJ}}
\sum_{E\leq F\leq G}\mu_{FG} \sum_{G\leq H \leq I}\mu_{HI}\\
&=& \sum_{D\leq E\leq G \leq I\leq J}\mu_{DE}(-1)^{r_{DG}}t^{r_{DE}+ r_{IJ}}(t-1)^{r_{EJ}}
\delta_{EG} \delta_{GI}\\
&=& \sum_{D\leq E\leq J}\mu_{DE}(-1)^{r_{DE}}t^{r_{DJ}}(t-1)^{r_{EJ}}\\
&=& (-t)^{r_{DJ}} \sum_{D\leq E\leq J}\mu_{DE}(1-t)^{r_{EJ}}\\
&=& t^{2r_{DJ}} (-t^{-1})^{r_{DJ}}\chi_{DJ}(1-t)\\
&=& t^{r_{DJ}} \hbc_{DJ}(t^{-1})\\
&=& \overline{\hbc}_{DJ}(t).
\end{eqnarray*}
This completes the proof.
\end{proof}
\end{example}

\excise{
\vspace{\baselineskip}
\begin{remark}
By Proposition \ref{prop:R}, any polynomial can be a KLS-polynomial, so it is not interesting to ask what properties KLS-polynomials have in general.
But it can be an interesting question for a particular class of weakly ranked posets and kernels, and the answers vary widely.
Classical Kazhdan-Lusztig polynomials (Example \ref{ex:coxeter}) always have constant term 1
and non-negative coefficients \cite[Theorem 1.2(1)]{EW14}, but otherwise they are arbitrary \cite{Polo}.
Similarly, Kazhdan-Lusztig coefficients of matroids (Example \ref{ex:char}) have 
constant term 1 \cite[Proposition 2.11]{EPW}
and are conjectured to have non-negative coefficients \cite[Conjecture 2.3]{EPW}, but they are also conjectured to
be real-rooted \cite[Conjecture 3.2]{kl-survey}.

In Section \ref{sec:geometry}, we will give a general framework for giving cohomological interpretations
of right KLS-polynomials for various classes of weakly ranked posets and kernels.  
In particular, this will provide a means
of proving that certain KLS-polynomials have constant term 1 and non-negative coefficients.
\end{remark}
}

\section{Geometry}\label{sec:geometry}
In this section we give a general geometric framework for interpreting right KLS-polynomials
in terms of the stalks of intersection cohomology sheaves on a stratified space.
Under some additional assumptions, we also give cohomological interpretations for the associated $Z$-polynomials.
Our primary reference for technical properties of intersection cohomology will be the book of Kiehl and Weissauer \cite{KW},
however, a reader who is learning this material for the first time might also benefit from the friendly
discussion in the book of Kirwan and Woolf \cite[Section 10.4]{KirwanWoolf}.

\subsection{The setup}\label{sec:setup}
Fix a finite field $\Fq$, an algebraic closure $\Fqb$, and a prime $\ell$ that does not divide $q$.
For any variety $Z$ over $\Fq$, let $\IC_Z$ denote the $\ell$-adic intersection cohomology sheaf on the variety $Z(\Fqb)$.
We adopt the convention of {\em not} shifting $\IC_Z$ to make it perverse.  
In particular, if $Z$ is smooth, then $\IC_Z$ is isomorphic to the constant sheaf in degree zero.

Suppose that we have a variety $Y$ over $\Fq$ and a stratification
$$Y = \bigsqcup_{x\in P} V_x.$$
By this we mean that each stratum $V_x$ is a smooth connected subvariety of $Y$ and the closure of each stratum
is itself a union of strata.
We define a partial order on $P$ by putting $x\leq y\iff V_x\subset\bar V_y$, and
a weak rank function by the formula $r_{xy} = \dim V_y - \dim V_x$.
Fix a point $e_x\in V_x$ for each $x\in P$.

Next, suppose that we have a stratification preserving $\gm$-action $\rho_x:\gm\to\Aut(Y)$ for each $x\in P$ 
and an affine $\gm$-subvariety $C_x\subset Y$ with the following properties:
\begin{itemize}
\item $C_x$ is a weighted affine cone with respect to $\rho_x$ with cone point $e_x$.
In other words, the $\Z$-grading on the affine coordinate ring $\Fq[C_x]$ induced by $\rho_x$ is non-negative
and the vanishing locus of the ideal of positively graded elements is $\{e_x\}$.
\item For all $x,y\in P$, let 
$$U_{xy} := C_x \cap V_y\and X_{xy} := C_x \cap \bar V_y.$$
We require that
the restriction of $\IC_{\bar V_y}$ to $C_x(\Fqb)$ is isomorphic to
$\IC_{X_{xy}}$.
\end{itemize}

Note that the variety $X_{xy}$ is a closed $\gm$-equivariant subvariety of $C_x$, therefore it is either empty or a weighted
affine cone with cone point $e_x$.  We have $$e_x\in X_{xy} \iff e_x\in \bar V_y \iff x\leq y,$$
so $X_{xy}$ is nonempty if and only if $x\leq y$.

\begin{lemma}\label{X-strat}
For all $x\leq z$, we have $\displaystyle X_{xz} = \bigsqcup_{x\leq y\leq z} U_{xy}$.
\end{lemma}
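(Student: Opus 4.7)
The plan is to reduce the claim to two independent observations: a tautological decomposition coming from the stratification of $\bar V_z$, and a nonemptiness criterion for $U_{xy}$ coming from the $\gm$-action $\rho_x$.

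First, since the closure of any stratum is a union of strata, I would write
$$\bar V_z = \bigsqcup_{y \leq z} V_y,$$
which is the definition of the partial order on $P$. Intersecting both sides with $C_x$ and distributing the intersection across the disjoint union yields
$$X_{xz} = C_x \cap \bar V_z = \bigsqcup_{y \leq z} (C_x \cap V_y) = \bigsqcup_{y \leq z} U_{xy}.$$
So the content of the lemma is really that the indexing set can be restricted to $\{y : x \leq y \leq z\}$; equivalently, that $U_{xy} = \emptyset$ whenever $y \not\geq x$.

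The main (and essentially only) obstacle is therefore showing $U_{xy} \neq \emptyset \Rightarrow x \leq y$. Here I would exploit the two crucial properties of $C_x$: it is $\rho_x$-invariant, and it is a weighted affine cone with cone point $e_x$ (so every point of $C_x$ admits a $\rho_x(t)$-limit at $t \to 0$ equal to $e_x$, by non-negativity of the grading on $\Fq[C_x]$). If $p \in U_{xy} = C_x \cap V_y$, then the orbit $\rho_x(\gm) \cdot p$ lies in $C_x$ (by $\gm$-invariance of $C_x$) and in $V_y$ (because $\rho_x$ is stratification-preserving). Taking the limit as $t \to 0$, the resulting point lies in $C_x$ and in $\bar V_y$, and equals $e_x$. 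Thus $e_x \in V_x \cap \bar V_y$, which by the definition of the partial order on $P$ forces $x \leq y$.

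Combining the two steps gives the desired equality. I expect the only subtlety to be ensuring that the $\rho_x$-limit argument is phrased cleanly enough to make sense over $\Fqb$ rather than $\C$; this is standard for weighted affine cones, since the positive grading on $\Fq[C_x]$ translates directly into the algebraic statement that the morphism $\gm \times C_x \to C_x$ extends to $\bA^1 \times C_x \to C_x$ sending $0 \times C_x$ to $\{e_x\}$, and the rest of the argument is formal.
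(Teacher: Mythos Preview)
Your proof is correct and follows essentially the same approach as the paper: both decompose $\bar V_z$ into strata and then use the $\gm$-contraction $\rho_x$ to force $e_x\in\bar V_y$ whenever $U_{xy}\neq\emptyset$. The only cosmetic difference is that the paper routes this through the observation (made just before the lemma) that $X_{xy}$ is a nonempty weighted affine cone iff $x\leq y$, whereas you apply the contraction argument directly to a point of $U_{xy}$.
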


\begin{proof}
We have $X_{xz} = C_x\cap \bar V_z = C_x \cap \bigsqcup_{y\leq z} V_y = \bigsqcup_{y\leq z} U_{xy}.$
% = \bigsqcup_{x\leq y\leq z} U_{xy}$.
If $x$ is not less than or equal to $Y$, then $X_{xy}$ is empty, thus so is $U_{xy}$.
\end{proof}

The condition on restrictions of IC sheaves is somewhat daunting.
In each of our families of examples, we will check this condition by means of a group
action, using the following lemma.

\begin{lemma}\label{orbit}
Suppose that $Y$ is equipped with an action of an algebraic group $G$ preserving the stratification.
Suppose in addition that, for each $x\in P$, there exists a subgroup $G_x\subset G$ such that the composition
$$\varphi_x: G_x\times C_x\hookrightarrow G\times Y \to Y$$ is an open immersion.
Then for all $x\leq y\in P$, the restriction
of $\IC_{\bar V_y}$ to $C_x(\Fqb)$ is isomorphic to $\IC_{X_{xy}}$.
\end{lemma}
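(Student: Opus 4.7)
The idea is to exploit the locality of the IC sheaf (its restriction to an open subvariety is the IC sheaf of that open subvariety) together with its compatibility with smooth pullback, reducing the computation on $C_x$ to a trivial product situation through the open immersion $\varphi_x$.

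First, set $U_x := \varphi_x(G_x \times C_x) \subset Y$, which is open since $\varphi_x$ is an open immersion. By the locality of IC sheaves \cite{KW}, we have $\IC_{\bar V_y}|_{U_x(\Fqb)} \cong \IC_{\bar V_y \cap U_x}$. Since the $G$-action preserves the stratification, each $\bar V_y$ is $G$-stable (and in particular $G_x$-stable), so for $(g,c) \in G_x \times C_x$ we have $g \cdot c \in \bar V_y$ if and only if $c \in \bar V_y$; hence $\varphi_x^{-1}(\bar V_y) = G_x \times X_{xy}$. Pulling back along the isomorphism $\varphi_x: G_x \times C_x \to U_x$, the restriction of $\IC_{\bar V_y}$ to $U_x(\Fqb)$ is identified with $\IC_{G_x \times X_{xy}}$, viewed as a sheaf on $G_x \times C_x$.

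Next, since $G_x$ is smooth, the projection $\pi_2: G_x \times X_{xy} \to X_{xy}$ is smooth of relative dimension $\dim G_x$. In the unshifted IC convention of Section \ref{sec:setup}, smooth pullback sends $\IC$ to $\IC$ on the nose, yielding $\IC_{G_x \times X_{xy}} \cong \pi_2^*\IC_{X_{xy}}$, which is just the external tensor product $\Ql_{G_x} \boxtimes \IC_{X_{xy}}$. Restricting along the closed embedding $\{1\} \times C_x \hookrightarrow G_x \times C_x$, which under $\varphi_x$ recovers the original inclusion $C_x \hookrightarrow Y$, then produces $\IC_{X_{xy}}$ on $C_x(\Fqb)$, as required.

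The only technical point that requires care is the smooth pullback compatibility: with the paper's convention of not shifting IC to be perverse, the formula $\IC_{G_x \times X_{xy}} \cong \pi_2^*\IC_{X_{xy}}$ holds cleanly, but one should verify this translation from the standard perverse statement (where a shift by the relative dimension appears). This is bookkeeping rather than a real obstacle; beyond it, the lemma is an essentially formal consequence of the locality of IC and of the open-immersion hypothesis.
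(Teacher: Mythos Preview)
Your proof is correct and follows essentially the same approach as the paper's: pull back $\IC_{\bar V_y}$ along the open immersion $\varphi_x$, identify the preimage of $\bar V_y$ with $G_x\times X_{xy}$ using that $G$ preserves the stratification, recognize the resulting IC sheaf as a box product with the constant sheaf on $G_x$, and restrict to $\{1\}\times C_x$. The only cosmetic difference is that the paper invokes the box-product formula $\IC_{G_x\times X_{xy}}\cong\IC_{G_x}\boxtimes\IC_{X_{xy}}$ directly, whereas you route through smooth pullback along $\pi_2$; these are equivalent justifications of the same step.
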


\begin{proof}
Since $\varphi_x$ is an open immersion, we have
$\varphi_x^{-1}\IC_{\bar V_y} \cong \IC_{\varphi_x^{-1}(\bar V_y)}$ as sheaves on $G_x(\Fqb)\times C_x(\Fqb)$ for all $x,y\in P$.
Since the action of $G$ on $Y$ preserves the stratification, we have $$\varphi_x^{-1}(\bar V_y) = G_x\times (C_x\cap \bar V_y) = G_x \times X_{xy},$$
so $\varphi_x^{-1}\IC_{\bar V_y} \cong \IC_{G_x}\boxtimes\IC_{X_{xy}}$.
Since $G_x$ is smooth, $\IC_{G_x}$ is the constant sheaf on $G_x$.  Thus, if we further restrict
to $C_x(\Fqb) \cong \{\operatorname{id}_{G_x}\}\times C_x(\Fqb)$, we obtain $\IC_{X_{xy}}$.
\end{proof}

\begin{remark}
In some of our examples (Sections \ref{sec:gr} and \ref{sec:arrangement}), the $\gm$-action $\rho_x$ will not actually depend on $x$.
In other examples (Sections \ref{sec:flag} and \ref{sec:toric}), it will depend on $x$.
\end{remark}

\subsection{Intersection cohomology}
We will write $\IH^*(Z)$ and $\IH^*_c(Z)$ to denote the ordinary and compactly supported cohomology of $\IC_Z$.
Given a point $p\in Z$, we will write $\IH_p^*(Z)$ to denote the cohomology of the stalk of $\IC_Z$ at $p$.
Each of these graded $\Ql$-vector spaces has a natural Frobenius automorphism induced by the Frobenius automorphism of $Z$.
We will be interested in the vector spaces $\IH^*_{xy} := \IH^*_{e_x}(\bar V_y)$ for all $x\leq y$.

\begin{lemma}\label{slice}
If $x\leq y\leq z$ and $u\in U_{xy}$, then $\IH^*_{yz} \cong \IH^*_u(X_{xz})$.
\end{lemma}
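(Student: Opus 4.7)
The plan is to combine two facts: the hypothesis identifying the restriction of the ambient IC sheaf with $\IC_{X_{xz}}$, and the local constancy of IC sheaves along strata.

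First I would translate the right-hand side into a stalk of the ambient IC sheaf. By the last bullet of Section \ref{sec:setup}, the restriction of $\IC_{\bar V_z}$ to $C_x(\Fqb)$ is isomorphic to $\IC_{X_{xz}}$. Since $u\in U_{xy}\subset C_x$, taking stalks at $u$ yields
\[
\IH^*_u(X_{xz}) \;=\; \H^*\bigl((\IC_{X_{xz}})_u\bigr) \;\cong\; \H^*\bigl((\IC_{\bar V_z})_u\bigr).
\]
This already reduces the problem to showing that the stalk of $\IC_{\bar V_z}$ at $u$ agrees with its stalk at $e_y$.

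For the second step I would invoke the defining constructibility property of $\IC_{\bar V_z}$ with respect to the given stratification of $Y$: because the closure of every stratum is a union of strata (that is precisely how the stratification was defined), $\IC_{\bar V_z}$ is constructible, and in particular its restriction to each stratum $V_y$ is a local system. Since the strata $V_y$ are smooth and connected by assumption, this local system has (noncanonically) isomorphic stalks at every point of $V_y$. Applying this to the two points $u, e_y\in V_y$ gives
\[
\H^*\bigl((\IC_{\bar V_z})_u\bigr) \;\cong\; \H^*\bigl((\IC_{\bar V_z})_{e_y}\bigr) \;=\; \IH^*_{e_y}(\bar V_z) \;=\; \IH^*_{yz},
\]
and composing with the earlier identification closes the argument.

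The only genuine content — and the most likely stumbling block — is the second step, because one has to know that IC sheaves on $Y$ are automatically constructible with respect to the fixed stratification. This follows from the standard construction of middle-perversity IC as an iterated pushforward along the stratification, together with the fact that the closures of strata are unions of strata. Once that is granted, everything else is bookkeeping; no properties of $C_x$, of the $\gm$-actions, or of the particular point $e_x$ are needed beyond having $u$ and $e_y$ lie in the same connected stratum.
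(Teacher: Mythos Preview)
Your proof is correct and follows essentially the same two-step argument as the paper: identify the stalk of $\IC_{X_{xz}}$ at $u$ with the stalk of $\IC_{\bar V_z}$ at $u$ via the restriction hypothesis, and then identify the latter with the stalk at $e_y$ by local constancy of the IC sheaf along the connected stratum $V_y$. The paper merely performs these two steps in the opposite order.
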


\begin{proof}
Since $u$ and $e_y$ lie in the same connected stratum of $\bar V_z$, 
we have an isomorphism of stalks $\IC_{\bar V_z,e_y}\cong \IC_{\bar V_z,u}$.
Since the restriction of $\IC_{\bar V_z}$ to $C_x(\Fqb)$ is isomorphic to $\IC_{X_{xz}}$,
we have an isomorphism of stalks
$\IC_{\bar V_z,u} \cong \IC_{X_{xz},u}$.
Putting these two stalk isomorphisms together, we have
$$\IH^*_{yz} = \IH^*_{e_y}(\bar V_z)
= \H^*(\IC_{\bar V_z,e_y})
\cong \H^*(\IC_{\bar V_z,u})
\cong \H^*(\IC_{X_{xz},u})
= \IH^*_u(X_{xz}).$$
This completes the proof.
\end{proof}

\begin{lemma}\label{cone point}
For all $y\leq z$, $\IH^*_{yz} \cong \IH^*(X_{yz})$.
\end{lemma}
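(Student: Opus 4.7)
The plan is to combine the hypothesis on restrictions of IC sheaves from Section~\ref{sec:setup} with the standard $\gm$-contraction principle for cohomology of equivariant sheaves on weighted affine cones.

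First I would unwind the definitions. By definition, $\IH^*_{yz} = \IH^*_{e_y}(\bar V_z) = \H^*\bigl(\IC_{\bar V_z, e_y}\bigr)$, the cohomology of the stalk of $\IC_{\bar V_z}$ at the chosen point $e_y \in V_y$. On the other hand, $\IH^*(X_{yz})$ is the global cohomology of $\IC_{X_{yz}}$ on $X_{yz} = C_y \cap \bar V_z$. Since $y \leq z$, we have $e_y \in X_{yz}$, and $X_{yz}$ is a closed $\gm$-stable subvariety of the weighted affine cone $C_y$, hence itself a weighted affine cone with cone point $e_y$.

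Step one is to replace the ambient variety $\bar V_z$ by the slice $X_{yz}$. Applying the setup's assumption with $x=y$, the restriction of $\IC_{\bar V_z}$ to $C_y(\Fqb)$ is isomorphic to $\IC_{X_{yz}}$. Passing to stalks at $e_y$, this immediately gives
\begin{equation*}
\IH^*_{yz} = \H^*\bigl(\IC_{\bar V_z, e_y}\bigr) \;\cong\; \H^*\bigl(\IC_{X_{yz}, e_y}\bigr) = \IH^*_{e_y}(X_{yz}).
\end{equation*}

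Step two is to identify this stalk with the global cohomology of $X_{yz}$ via the contracting $\gm$-action. The sheaf $\IC_{X_{yz}}$ is canonically $\rho_y$-equivariant because it is intrinsically attached to the variety. Since $\rho_y$ contracts $X_{yz}$ to the cone point $e_y$ (the grading on $\Fq[C_y]$ is non-negative with vanishing locus $\{e_y\}$), the standard contraction principle for $\gm$-equivariant constructible complexes on a weighted affine cone yields a canonical isomorphism $\IH^*(X_{yz}) \cong \IH^*_{e_y}(X_{yz})$; this is the $\ell$-adic analogue of the classical fact and can be cited from Kiehl--Weissauer \cite{KW} (or obtained as a special case of Braden's hyperbolic localization applied to the contracting torus action). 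Composing with the isomorphism of step one proves the lemma.

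The only substantive point is step two; all of the work sits in invoking the contraction principle in the $\ell$-adic setting and checking that the hypotheses are indeed satisfied, namely that $X_{yz}$ is a weighted affine cone (automatic, since it is a closed $\gm$-stable subvariety of $C_y$) and that $\IC_{X_{yz}}$ is $\gm$-equivariant (automatic by the intrinsic nature of intersection cohomology). No combinatorial input from the weak rank function is required here.
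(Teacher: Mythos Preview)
Your proof is correct and follows essentially the same route as the paper's. The paper phrases step one as ``apply Lemma~\ref{slice} with $x=y$'' (taking $u=e_y$), which unwinds to exactly the stalk computation you wrote; for step two it cites \cite[Lemma~4.5(a)]{KL80} rather than Kiehl--Weissauer or Braden, but this is the same contraction principle.
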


\begin{proof}
If we apply Lemma \ref{slice} with $x=y$, we find that $\IH^*_{yz} \cong \IH^*_{e_y}(X_{yz})$.
Since $X_{yz}$ is a weighted affine cone with cone point $e_y$, the cohomology of the stalk of the IC sheaf at $e_y$
coincides with the global intersection cohomology \cite[Lemma 4.5(a)]{KL80}.
\end{proof}

We call an intersection cohomology group {\bf chaste} if it vanishes in odd degrees and the Frobenius automorphism
acts on the degree $2i$ part by multiplication by $q^{i}$ \cite[Section 3.3]{EPW}.  (This is much stronger than being {\bf pure},
which is a statement about the absolute values of the eigenvalues of the Frobenius automorphism.)

\subsection{Right KLS-polynomials}
Define $f\in \scrI(P)$ by putting $$f_{xy}(t) := \sum_{i\geq 0}t^i\dim\IH^{2i}_{xy}$$ for all $x\leq y$. 
We observe that $f\in\Ih(P)$ by Lemma \ref{cone point} and \cite[Proposition 3.4]{EPW}.
% The following theorem is our main result.

\begin{theorem}\label{main}
Suppose that we have an element $\kappa\in\scrI(P)$ 
such that, for all $x\leq y$ and all positive integers $s$, $$\kappa_{xy}(q^s) = |U_{xy}(\mathbb{F}_{q^s})|.$$
Then $\IH^*_{xz}$ is chaste for all $x\leq z$, $\kappa$ is a $P$-kernel, and $f$ is the associated right KLS-function. 
% and the right KLS-function $f\in\scrI(P)$ associated with $\kappa$ is given by the formula
% \begin{equation}\label{main eq}f_{xz}(t) := \sum_{i\geq 0}t^i\dim\IH^{2i}_{xz}\end{equation} for all $x\leq z$.
\end{theorem}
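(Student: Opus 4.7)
The plan is induction on $r_{xz}$. The base case $x=z$ is immediate: $U_{xx}=\{e_x\}$ forces $\kappa_{xx}(t)=1$, and $X_{xx}=\{e_x\}$ makes $\IH^*_{xx}=\Ql$ trivially chaste with $f_{xx}(t)=1$, so all three conclusions hold tautologically.

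For the inductive step, fix $x<z$ and assume the theorem on every shorter interval; in particular, $\IH^*_{yz}$ is chaste for all $x<y\leq z$. The heart of the argument is the Grothendieck-Lefschetz trace formula for $\IC_{X_{xz}}$, decomposed along the stratification of Lemma \ref{X-strat}:
\[
\sum_i (-1)^i \operatorname{tr}(\Fr^s,\IH^i_c(X_{xz})) = \sum_{x\leq y\leq z}\sum_{p\in U_{xy}(\Fqs)}\operatorname{tr}(\Fr^s,\IC_{X_{xz},p}).
\]
For $y>x$, Lemma \ref{slice} identifies each stalk $\IC_{X_{xz},p}$ (as a Frobenius-equivariant complex, using the group-equivariance of Lemma \ref{orbit} to rule out nontrivial monodromy on the stratum $V_y$) with $\IC_{X_{yz},e_y}$, whose alternating Frobenius trace is $f_{yz}(q^s)$ by the inductive chasteness; summing over the $\kappa_{xy}(q^s)=|U_{xy}(\Fqs)|$ points yields $\kappa_{xy}(q^s)f_{yz}(q^s)$. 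The $y=x$ term is the supertrace of Frobenius on $\IH^*_{xz}$. On the left, the $\gm$-action $\rho_x$ contracts $X_{xz}$ to $e_x$, so $\IH^*(X_{xz})\cong\IH^*_{xz}$ by Lemma \ref{cone point}; Verdier self-duality of $\IC_{X_{xz}}$ (with $\dim X_{xz}=r_{xz}$ from the transversality implicit in the weighted cone structure) then expresses $\operatorname{tr}(\Fr^s,\IH^i_c(X_{xz}))$ as $q^{sr_{xz}}$ times the trace of inverse Frobenius on $\IH^{2r_{xz}-i}_{xz}$.

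Once chasteness of $\IH^*_{xz}$ is established, the identity above specializes to $\bar f_{xz}(q^s)-f_{xz}(q^s)=\sum_{x<y\leq z}\kappa_{xy}(q^s)f_{yz}(q^s)$ for every $s\geq 1$, hence an identity $\bar f_{xz}(t)=(\kappa f)_{xz}(t)$ in $\Z[t]$. Since $f\in\Ih(P)$ is invertible by Lemma \ref{inverses}, we conclude $\kappa=\bar f f^{-1}$, and Proposition \ref{prop:R} then asserts that $\kappa$ is a $P$-kernel with right KLS-function $f$.

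The main obstacle will be the chasteness step itself. Deligne-BBD purity tells us that $\IC_{X_{xz}}$ is a pure perverse sheaf, and combined with the affine weighted-cone structure (which gives matching weight inequalities from both sides), it follows that $\IH^i_{xz}$ is pure of weight $i$ — i.e., Frobenius eigenvalues have absolute value $q^{si/2}$. Upgrading this to the assertion that the eigenvalues are exactly $q^{si/2}$ and that odd cohomology vanishes requires feeding the polynomial-in-$q^s$ Lefschetz identity back in, using the inductive chasteness of neighboring strata and the integrality of $\kappa_{xy}(t)\in\Z[t]$ to pin down the eigenvalues precisely. This weight-theoretic bootstrap, converting purity into integrality via the Lefschetz identity, is the classical Kazhdan-Lusztig move that carries the whole framework.
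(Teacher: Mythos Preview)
Your proposal is correct and follows essentially the same route as the paper's proof: induction on $r_{xz}$, the Lefschetz trace formula for $\IC_{X_{xz}}$ decomposed along the stratification of Lemma~\ref{X-strat}, Lemma~\ref{slice} to identify stalk traces with $f_{yz}(q^s)$ on the inductively chaste strata, Poincar\'e duality together with Lemma~\ref{cone point} on the left, and then the purity-to-integrality bootstrap (the paper cites \cite[Proposition~3.4 and Lemma~3.6]{EPW} for the weight bound and the eigenvalue argument) before invoking Proposition~\ref{prop:R}. Two very minor remarks: the Frobenius-equivariance of the stalk identification in Lemma~\ref{slice} does not actually require ruling out monodromy via Lemma~\ref{orbit}, since trace is conjugation-invariant and local constancy along the $\Fq$-defined stratum already gives matching traces at all $\Fqs$-points; and the key disjointness step in the chasteness argument --- that the $\alpha_{i,j}$ and the $q^{r_{xz}}/\alpha_{i,j}$ are separated by absolute value, so no cancellation can occur before one applies the integrality lemma --- deserves to be made explicit, as it is where purity is actually spent.
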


\begin{remark}
The first time that you read the proof of Theorem \ref{main}, 
it is helpful to pretend that we already know that $\IH^*_{xz}$ is chaste for all $x\leq z$.
In this case, the proof simplifies to a straightforward application of Poincar\'e duality and the Lefschetz formula, 
along with Lemmas \ref{X-strat}, \ref{slice}, and \ref{cone point}.  The actual proof 
as it appears is made significantly more subtle by the need
to fold the chastity statement into the induction.
\end{remark}

\begin{proofmain}
% Let $f\in I(P)$ be the function defined by Equation \ref{main eq}.
% Lemma \ref{cone point} and \cite[Proposition 3.4]{EPW} together imply that $f\in\Ih(P)$.
% We will prove that $\IH^*_{xz}$ is chaste for all $x\leq z$ and that $\bar f = \kappa f$.
% Once we do this, the fact that $\kappa$ is a $P$-kernel with $f$ as its associated right KLS-function
% will follow from Proposition \ref{prop:R}(2).
%
We begin with an inductive proof of chastity.  It is clear that $\IH^*_{xx}$ is chaste for all $x\in P$.  Now consider a pair of elements
$x<z$, and assume that $\IH^*_{yz}$ is chaste for all $x<y\leq z$.
% 
% We will proceed by induction.  Consider the following three statements:
% \begin{itemize}
% \item[($\operatorname{A}_{xy}$)] $\IH^*_{xy}$ is chaste
% \item[($\operatorname{B}_{xy}$)] $f_{xy}(t) := \sum_{i\geq 0}t^i\dim\IH^{2i}_{xy}$
% \item[($\operatorname{C}_{xy}$)] $\bar f_{xy}(t) = (\kappa f)_{xy}(t)$.
% \end{itemize}
% It is clear that all three statements hold when $x=y$.  Now assume that $x<z$ and assume that $\operatorname{A}_{xy}$, $\operatorname{B}_{xy}$, and $\operatorname{C}_{xy}$
% all hold whenever $x<y\leq z$.  We will now prove that $\operatorname{A}_{xz}$, $\operatorname{B}_{xz}$, and $\operatorname{C}_{xz}$ all hold.
% 
Let $s$ be any positive integer.  Applying the Lefschetz formula \cite[III.12.1(4)]{KW}, 
along with Lemmas \ref{X-strat} and \ref{slice}, we find that
\begin{eqnarray*}\sum_{i\geq 0}(-1)^i \operatorname{tr}\!\Big(\Fr^s\curvearrowright \IH_c^i(X_{xz})\Big)
&=& \sum_{u\in X_{xz}(\Fqs)}\sum_{i\geq 0}(-1)^i \operatorname{tr}\!\Big(\Fr^s\curvearrowright\IH^i_u(X_{xz})\Big)\\
&=& \sum_{x\leq y\leq z}\sum_{u\in U_{xy}(\Fqs)}\sum_{i\geq 0}(-1)^i \operatorname{tr}\!\Big(\Fr^s\curvearrowright\IH^i_u(X_{xz})\Big)\\
&=& \sum_{x\leq y\leq z}\sum_{u\in U_{xy}(\Fqs)}\sum_{i\geq 0}(-1)^i \operatorname{tr}\!\Big(\Fr^s\curvearrowright\IH^i_{yz}\Big)\\
&=& \sum_{x\leq y\leq z} \kappa_{xy}(q^s)\sum_{i\geq 0}(-1)^i \operatorname{tr}\!\Big(\Fr^s\curvearrowright\IH^i_{yz}\Big).
\end{eqnarray*}
By Poincar\'e duality \cite[II.7.3]{KW}, we have
\begin{equation*}\label{pd}
\operatorname{tr}\!\Big(\Fr^s\curvearrowright \IH_c^i(X_{xz})\Big)
= q^{s\rxz}\operatorname{tr}\!\Big(\Fr^{-s}\curvearrowright \IH^{2\rxz - i}(X_{xz})\Big).
\end{equation*}
By our inductive hypothesis, we have
$\sum_{i\geq 0}(-1)^i \operatorname{tr}\!\Big(\Fr^s\curvearrowright\IH^*_{yz}\Big) = f_{yz}(q^s)$
for all $x<y\leq z$.  Moving the $x=y$ term from the right hand side to the left hand side, the Lefschetz formula becomes
\begin{equation}\label{lef}
\sum_{i\geq 0}(-1)^i \left(q^{s\rxz} \operatorname{tr}\!\Big(\Fr^{-s}\curvearrowright \IH^{2\rxz-i}_{xz}\Big) - \operatorname{tr}\!\Big(\Fr^{s}\curvearrowright \IH^i_{xz}\Big)\right)
= \sum_{x<y\leq z}\kappa_{xy}(q^s)f_{yz}(q^s).
\end{equation}

We now follow the proof of \cite[Theorem 3.7]{EPW}.  Let $b_i = \dim\IH^i_{xz}$.
Let $(\a_{i,1},\ldots,\a_{i,b_i})\in\overline{\Q}_\ell^{b_i}$ be the eigenvalues of the Frobenius action on $\IH^i_{xz}$ 
(with multiplicity, in any order).  Then Equation \eqref{lef} becomes
$$\sum_{i\geq 0}(-1)^i\sum_{j=1}^{b_i} \Big( (q^{r_{xz}}/\a_{i,j})^s - \a_{i,j}^s \Big)
= \sum_{x<y\leq z}\kappa_{xy}(q^s)f_{yz}(q^s).$$
By Lemma \ref{cone point} and \cite[Proposition 3.4]{EPW}, $\IH^i_{xz} = 0$ for $i\geq r_{xz}$, and for
any $i<r_{xz}/2$, $\a_{i,j}$ has absolute value $q^{i/2} < q^{r_{xz}/2}$.  It follows that
$q^{r_{xz}}/\a_{i,j}$ has absolute value $q^{r_{xz}-i/2} > q^{r_{xz}/2}$, and therefore that 
the numbers that appear with positive sign on the left-hand side of Equation \eqref{lef}
are pairwise disjoint from the numbers that appear with negative sign.
Since the right-hand side is a sum of integer powers of $q^s$ with integer coefficients, \cite[Lemma 3.6]{EPW}
tells us that each $\a_{i,j}$ must also be an integer power of $q$.  This is only possible if $b_i = 0$ for odd $i$
and $\a_{i,j} = q^{i/2}$ for even $i$, thus $\IH^*_{xz}$ is chaste.

Now that we have established chastity, Equation \eqref{lef} becomes
$$q^{s\rxz}f_{xz}(q^{-s}) - f_{xz}(q^s) = \sum_{x<y\leq z}\kappa_{xy}(q^s)f_{yz}(q^s),$$
or equivalently $$\bar f_{xz}(q^s) = q^{s\rxz}f_{xz}(q^{-s}) = \sum_{x\leq y\leq z}\kappa_{xy}(q^s)f_{yz}(q^s) = (\kappa f)_{xz}(q^s).$$
Since this holds for all positive $s$, it must also hold with $q^s$ replaced by the formal variable $t$,
thus $\bar f = \kappa f$.  The fact that $\kappa$ is a $P$-kernel with $f$ as its associated right KLS-function
now follows follow from Proposition \ref{prop:R}(2).
\end{proofmain}

\vspace{\baselineskip}
The same idea used in the proof of Theorem \ref{main} can be used to obtain the following converse.

\begin{theorem}\label{converse}
Suppose that $\IH^*_{xz}$ is chaste for all $x\leq z$, and let $\kappa := \bar f f^{-1}$.
Then for all $s>0$ and $x\leq z$,  
$$\kappa_{xz}(q^s) = |U_{xz}(\mathbb{F}_{q^s})|.$$
\end{theorem}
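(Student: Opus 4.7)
The plan is to redo the Lefschetz-trace computation from the proof of Theorem \ref{main}, with the crucial simplification that chastity is now a hypothesis. This converts the delicate inductive separation of Frobenius eigenvalues into a one-line extraction of the unknown $\kappa_{xz}$ from the identity $\bar f = \kappa f$.

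Fix $x\leq z$ and apply the Lefschetz fixed point formula to $X_{xz}$. Using the stratification of Lemma \ref{X-strat} together with the stalk identification of Lemma \ref{slice}, and collapsing each inner alternating trace sum via chastity of $\IH^*_{yz}$, the point-counting side becomes
$$\sum_{i\geq 0}(-1)^i \operatorname{tr}\!\Big(\Fr^s\curvearrowright\IH^i_c(X_{xz})\Big) = \sum_{x\leq y\leq z} |U_{xy}(\Fqs)| \cdot f_{yz}(q^s).$$
On the other hand, Lemma \ref{cone point} identifies $\IH^*(X_{xz})$ with $\IH^*_{xz}$, and Poincar\'e duality combined with chastity of $\IH^*_{xz}$ evaluates the compactly supported side to $q^{sr_{xz}} f_{xz}(q^{-s}) = \bar f_{xz}(q^s)$. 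Since $\kappa := \bar ff^{-1}$ satisfies the matching identity $\bar f_{xz}(q^s) = \sum_{x\leq y\leq z} \kappa_{xy}(q^s)f_{yz}(q^s)$, subtraction yields
$$\sum_{x\leq y\leq z}\Big(\kappa_{xy}(q^s) - |U_{xy}(\Fqs)|\Big) f_{yz}(q^s) = 0.$$

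I would then induct on $r_{xz}$. The base case $x=z$ is $\kappa_{xx}(q^s) = 1 = |U_{xx}(\Fqs)|$, where the first equality is immediate from $f_{xx}=1$ and the second is forced by the cone contraction of $C_x$ to $e_x$ within the stratum $V_x$. In the inductive step, the terms with $x\leq y<z$ vanish by the inductive hypothesis, and since $f_{zz}(q^s)=1$, the remaining equation forces $\kappa_{xz}(q^s) = |U_{xz}(\Fqs)|$. The main obstacle is essentially nil: the Lefschetz-plus-Poincar\'e-duality calculation is identical to the one already carried out for Theorem \ref{main}, but having chastity in hand replaces the subtle eigenvalue-separation argument with a routine cancellation. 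The only point requiring a moment's care is the base case $|U_{xx}(\Fqs)|=1$, which one checks from the cone structure on $C_x$.
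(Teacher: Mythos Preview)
Your proposal is correct and follows essentially the same route as the paper: both apply the Lefschetz formula together with Poincar\'e duality and chastity to obtain $\bar f_{xz}(q^s) = \sum_{x\leq y\leq z}|U_{xy}(\Fqs)|f_{yz}(q^s)$, compare this with the defining identity $\bar f = \kappa f$, and then induct on $r_{xz}$ to peel off the $y=z$ term. The only cosmetic difference is that the paper isolates the $y=z$ term before comparing while you subtract first and then isolate; the base case $U_{xx}=\{e_x\}$ is asserted without further comment in the paper as well.
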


\begin{proof}
We proceed by induction.  When $x=z$, we have $\kappa_{xz}(t) = 1$ and $U_{xz} = \{e_x\}$,
so the statement is clear.  Now assume that $\kappa_{xy}(q^s) = |U_{xy}(\mathbb{F}_{q^s})|$
for all $x\leq y<z$.  By Poincar\'e duality the Lefschetz formula, we have
$$\bar f_{xz}(q^s) = \sum_{x\leq y\leq z}  |U_{xy}(\mathbb{F}_{q^s})| f_{yz}(q^s)
=  |U_{xz}(\mathbb{F}_{q^s})| + \sum_{x\leq y< z}  \kappa(q^s) f_{yz}(q^s).$$
By the definition of $\kappa$, we have
$$\bar f_{xz}(q^s) = \sum_{x\leq y\leq z}  \kappa(q^s) f_{yz}(q^s).$$
Comparing these two equations, we find that $|U_{xz}(\mathbb{F}_{q^s})| = \kappa(q^s)$.
\end{proof}

\begin{remark}\label{it's okay}
In Section \ref{sec:gr}, we will apply Theorem \ref{converse} when $Y$ is the affine Grassmannian.  
Then $Y$ is an ind-scheme rather than a variety,
but each $\bar V_x$ is an honest variety, and the proof goes through without modification.
\end{remark}

\subsection{\boldmath{$Z$}-polynomials}\label{sec:Z-geom}
In this section we will explain how to give a cohomological interpretation of $Z$-polynomials under certain more restrictive hypotheses.
Specifically, we will assume that $\IH^*_{xy}$ is chaste for all $x\leq y$, let $\kappa := \bar f f^{-1}$, and let $g$ be the {\bf left}
KLS-function associated with $\kappa$.  We will also assume that there is a minimal element $0\in P$ and
a function $h\in\Ih(P)$ such that $\bar h_{0x}(q^s) = |V_x(\Fqs)|$ for all $x\in P$ and $s>0$.  Finally, we will assume that $\bar V_y$
is proper for all $y\in P$.

\begin{theorem}\label{coh-Z0}
Suppose that all of the above hypotheses are satisfied.
Then for all $y\in P$, we have $g_{0y}(t) = h_{0y}(t)$, $\IH^*(\bar V_y)$ is chaste, and
$$\sum_{i\geq 0} t^i \dim \IH^{2i}(\bar V_y) = Z_{0y}(t).$$
\end{theorem}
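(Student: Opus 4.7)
The strategy is parallel to that of Theorem \ref{main}: combine the Lefschetz trace formula on $\bar V_y$ (which is proper by hypothesis) with Poincar\'e duality, use the assumed chastity of stalks, and then deduce chastity of the global intersection cohomology. The main novelty compared to Theorem \ref{main} is the use of Proposition \ref{weak g} to identify the auxiliary point-counting function $h$ with the left KLS-function $g$ on the first row of the incidence algebra; this is what converts the Lefschetz output into a statement about $Z_{0y}$.

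I would first apply the Lefschetz trace formula to the stratification $\bar V_y = \bigsqcup_{x \leq y} V_x$. Since $\IC_{\bar V_y}$ is Frobenius-equivariantly locally constant along each stratum $V_x$ with stalk cohomology $\IH^*_{xy}$, and $\IH^*_{xy}$ is chaste by hypothesis, each $u \in V_x(\Fqs)$ contributes $f_{xy}(q^s)$. Summing over strata and using $|V_x(\Fqs)| = \bar h_{0x}(q^s)$ gives
$$\sum_i (-1)^i \operatorname{tr}\bigl(\Fr^s, \IH^i(\bar V_y)\bigr) = (\bar h f)_{0y}(q^s).$$
Because $\bar V_y$ is proper, Poincar\'e duality for the self-dual sheaf $\IC_{\bar V_y}$ yields $\operatorname{tr}(\Fr^s,\IH^i(\bar V_y)) = q^{s r_{0y}} \operatorname{tr}(\Fr^{-s}, \IH^{2r_{0y}-i}(\bar V_y))$, where we identify $\dim \bar V_y$ with the poset rank $r_{0y}$. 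Taking the alternating sum gives $(\bar h f)_{0y}(q^s) = q^{s r_{0y}}(\bar h f)_{0y}(q^{-s})$ for all $s>0$, hence identically; in other words $\overline{\bar h f}_{0y} = (\bar h f)_{0y}$. Since $\overline{\bar h f} = h\bar f$, this is exactly the hypothesis of Proposition \ref{weak g} at $x=0$, and I would conclude that $h_{0y}(t) = g_{0y}(t)$ for every $y \in P$.

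Combining $h_{0y}=g_{0y}$ with Proposition \ref{prop:Z} gives $Z_{0y}(t) = (\bar g f)_{0y}(t) = (\bar h f)_{0y}(t)$, so
$$Z_{0y}(q^s) = \sum_i (-1)^i \operatorname{tr}\bigl(\Fr^s, \IH^i(\bar V_y)\bigr).$$
To convert this Euler characteristic identity into the Poincar\'e polynomial identity, I would invoke purity of $\IC_{\bar V_y}$ (Beilinson--Bernstein--Deligne--Gabber): the Frobenius eigenvalues on $\IH^i(\bar V_y)$ have absolute value $q^{i/2}$. Eigenvalues from odd $i$ all enter with the same negative sign, have half-integer $q$-valuations, and have absolute values disjoint from each other and from the even-$i$ contributions, so they can neither cancel nor match any term on the right-hand side (an integer polynomial in $q^s$); by the linear-independence-of-characters argument of \cite[Lemma 3.6]{EPW}, we deduce $\IH^{\text{odd}}(\bar V_y) = 0$. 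Within each even degree $2k$, the only eigenvalue with absolute value $q^k$ consistent with the integer-power-of-$q$ expansion of $Z_{0y}(q^s)$ is $q^k$ itself, giving chastity and the claimed Poincar\'e polynomial identity.

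The main delicacy is the bookkeeping in this final parity argument, which mirrors the end of the proof of Theorem \ref{main} but is arguably simpler here because chastity of stalks is input rather than output. A minor additional point is that the Poincar\'e duality step tacitly identifies $r_{0y}$ with $\dim \bar V_y$, so one should assume (or verify in the applications of Section \ref{sec:examples}) that the minimum stratum $V_0$ is a point.
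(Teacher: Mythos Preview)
Your proof is correct and matches the paper's approach essentially step for step: Lefschetz on the proper variety $\bar V_y$, Poincar\'e duality to obtain $(\bar h f)_{0y}=(h\bar f)_{0y}$, Proposition~\ref{weak g} to get $h_{0y}=g_{0y}$, then purity (Deligne) plus \cite[Lemma 3.6]{EPW} for chastity. Regarding your final caveat, note that $V_0$ being a point is already forced by the hypotheses: $h\in\Ih(P)$ gives $h_{00}(t)=1$, hence $\bar h_{00}(t)=1$, hence $|V_0(\Fqs)|=\bar h_{00}(q^s)=1$ for all $s$, so $\dim V_0=0$ and $r_{0y}=\dim\bar V_y$ as needed.
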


\begin{proof}
Following the proof of Theorem \ref{main}, we apply the Lefschetz formula to obtain
\begin{eqnarray*}\sum_{i\geq 0}(-1)^i \operatorname{tr}\!\Big(\Fr^s\curvearrowright \IH_c^i(\bar V_y)\Big)
&=& \sum_{v\in \bar V_y(\Fqs)}\sum_{i\geq 0}(-1)^i \operatorname{tr}\!\Big(\Fr^s\curvearrowright\IH^*_v(\bar V_y)\Big)\\
&=& \sum_{x\leq y}\sum_{v\in V_{x}(\Fqs)}\sum_{i\geq 0}(-1)^i \operatorname{tr}\!\Big(\Fr^s\curvearrowright\IH^*_v(\bar V_y)\Big)\\
&=& \sum_{x\leq y}\bar h_{0x}(q^s)\sum_{i\geq 0}(-1)^i \operatorname{tr}\!\Big(\Fr^s\curvearrowright\IH^*_{xy}\Big)\\
&=& \sum_{x\leq y}\bar h_{0x}(q^s) f_{xy}(q^s)\\
&=& (\bar h f)_{0y}(q^s).
\end{eqnarray*}
Since $\bar V_y$ is proper, compactly supported intersection cohomology coincides with ordinary intersection cohomology.
Poincar\'e duality then tells us that 
$(\bar h f)_{0y}(q^s) = (h\bar f)_{0y}(q^s)$.  Since this is true for all $s$, we must have
$(\bar h f)_{0y}(t) = (h\bar f)_{0y}(t)$.  By Proposition \ref{weak g}, we may conclude that $h_{0y}(t) = g_{0y}(t)$ for all $y\in P$, and therefore that
\begin{equation}\label{lef2}\sum_{i\geq 0}(-1)^i \operatorname{tr}\!\Big(\Fr^s\curvearrowright \IH^i(\bar V_y)\Big)=
Z_{0y}(q^s).\end{equation}

Let $b_i = \dim\IH^i(\bar V_y)$.
Let $(\a_{i,1},\ldots,\a_{i,b_i})\in\overline{\Q}_\ell^{b_i}$ be the eigenvalues of the Frobenius action on $\IH^i(\bar V_y)$ 
(with multiplicity, in any order).  Then Equation \eqref{lef2} becomes
$$\sum_{i\geq 0}(-1)^i\sum_{j=1}^{b_i} \a_{i,j}^s
= Z_{0y}(q^s).$$
By Deligne's theorem \cite[Theorems 3.1.5 and 3.1.6]{dCM}, each $\a_{i,j}$ has absolute value $q^{i/2}$.
Since the right-hand side is a sum of integer powers of $q^s$ with integer coefficients, \cite[Lemma 3.6]{EPW}
tells us that each $\a_{i,j}$ must also be an integer power of $q$.  This is only possible if $b_i = 0$ for odd $i$
and $\a_{i,j} = q^{i/2}$ for even $i$.
This proves that $\IH^i(\bar V_y)$ is chaste, and Equation \eqref{lef2} becomes
$$\sum_{i\geq 0} q^{is} \dim \IH^{2i}(\bar V_y) = Z_{0y}(q^s).$$
Since this holds for all positive $s$, it must also hold with $q^s$ replaced by the formal variable $t$.
\end{proof}

\begin{remark}
We will apply Theorem \ref{coh-Z0} in the case where $Y$ is a flag variety (Section \ref{sec:flag}), 
an affine Grassmannian (Section \ref{sec:gr}), or
the Schubert variety of a hyperplane arrangement (Section \ref{sec:arrangement}).
In the first and third cases, we will be able to make an even stronger statement, namely
that $$\sum_{i\geq 0} t^i \dim \IH^{2i}(\bar C_x \cap \bar V_y) = Z_{xy}(t)$$
(Theorems \ref{richardson} and \ref{Z-arr}).
However, this seems to be true for different reasons in the two cases, and we are unable to find a 
unified proof; see Remark \ref{different} for further discussion.
\end{remark}

\subsection{Category \boldmath{$\cO$}}\label{sec:O}
In this section we assume that the hypotheses of Theorem \ref{main} are satisfied,
and we make the additional assumption that each stratum $V_x$ is isomorphic to an affine space.
Though this is a very restrictive assumption, it is satisfied by two of our main families of examples
(Sections \ref{sec:flag} and \ref{sec:arrangement}).

For each $x\in P$, let $\cL_x := \IC_{\bar V_x}[\dim V_x]$, and let $\cO$ denote the Serre subcategory of $\Ql$-perverse sheaves on $Y(\Fqb)$ generated
by $\{\cL_x\mid x\in P\}$.  Let $\iota_x:V_x\to Y$ be the inclusion, and define
$$\cM_x := (\iota_x)_!{\Ql}_{V_x}[\dim V_x] \and \cN_x := (\iota_x)_*{\Ql}_{V_x}[\dim V_x].$$
Then $\cO$ is a highest weight category in with simple objects $\{\cL_x\}$, standard objects $\{\cM_x\}$, and costandard objects $\{\cN_x\}$
\cite[Lemmas 4.4.5 and 4.4.6]{BGS96}.
For all $x\leq y\in P$, we have $\Ext^j_\cO(\cM_x,\cL_y) = 0$ unless $j+r_{xy}$ is even, and
\begin{equation}\label{catO1}
f_{xy}(t) = \sum_{i\geq 0}t^i \dim \Ext_\cO^{r_{xy}-2i}(\cM_x,\cL_y).\end{equation}
Motivated by the examples in Section \ref{sec:flag},
Beilinson, Ginzburg, and Soergel prove that the category $\cO$ admits a grading, and the graded lift $\tilde\cO$ of $\cO$ is Koszul \cite[Theorem 4.4.4]{BGS96}.
The Grothendieck group of $\tilde \cO$ is a module over $\Z[t,t^{-1}]$ whose specialization at $t=1$ is canonically isomorphic to the Grothendieck group of $\cO$.
If $\tilde \cL_x$ and $\tilde \cN_x$ are the natural lifts to $\tilde\cO$ of $\cL_x$ and $\cN_x$, then 
we have \cite[Equation (3.0.6)]{CPS-abstract} \begin{equation}\label{catO2}[\tilde\cL_y] = \sum_{x\leq y} \bar f_{xy}(t^2) [\tilde\cN_x].\end{equation}
More generally, Cline, Parshall, and Scott study abstract frameworks for obtaining categorical (rather than cohomological) 
interpretations of Kazhdan-Lusztig-Stanley polynomials
\cite{CPS-abstract,CPS-graded}.

\section{Examples}\label{sec:examples}
In this section we apply the results of Section \ref{sec:geometry} to a number of different families of examples.

\subsection{Flag varieties}\label{sec:flag}
% In this section we recount the material in \cite{KL80}, which provides a cohomological interpretation of classical Kazhdan-Lusztig polynomials of Weyl groups.
% The argument here is not new, but we present it as an application of the general machinery of Section \ref{sec:geometry}.
% 
Let $G$ be a split reductive algebraic group over $\Fq$.  Let $B,B^*\subset G$ be Borel subgroups with the property 
that $T := B\cap B^*$ is a maximal torus.  Let $W := N(T)/T$ be the Weyl group.  
Let $Y := G/B$ be the {\bf flag variety} of $G$.
For all $w\in W$, let
$$V_w := \{gB\mid g\in BwB\}\and C_w:= \{gB\mid g\in B^*wB\}.$$
Let $e_w := wB$ be the unique element of $C_w\cap V_w$.
The variety $V_w$ is called a {\bf Schubert cell}, and $C_w$ is called an {\bf opposite Schubert cell}.
The flag variety is stratified by Schubert cells, and the induced partial order on $W$ is called the {\bf Bruhat order}.

The existence of the homomorphism $\rho_w:\gm\to T\subset G$ exhibiting $C_w$ as a weighted affine cone
is proved in \cite[Lemma A.6]{KL79} (see alternatively \cite[Section 1.5]{KL80}).
Let $N\subset B$ and $N^*\subset B^*$ be the unipotent radicals, and for
each $w\in W$, let $N_w := N \,\cap\, w N^* w^{-1}$.  Then $N_w$ acts freely and transitively on $V_w$ and 
the action map $N_w \times C_w\to Y$ is an open immersion \cite[Section 1.4]{KL80}.  In particular, Lemma \ref{orbit} applies.

For all $v\leq w$, let $U_{vw} := C_v\cap V_w$.
Kazhdan and Lusztig show that
$R_{vw}(q) = |U_{vw}(\Fq)|$ in \cite[Lemma A.4]{KL79} (see alternatively \cite[Section 4.6]{KL80}), where $R$
is the $W$-kernel of Example \ref{ex:coxeter}.  We therefore obtain the
following corollary to Theorem \ref{main}, which first appeared in \cite[Theorem 3.3]{KL80}.

\begin{corollary}\label{KL-right}
Let $f\in\Ih(W)$ be the right KLS-function associated with $R\in\scrI(W)$.  
For all $v\leq w\in W$, $\IH^{*}_{e_v}(\bar V_w)$ is chaste and $$f_{vw}(t) = \sum_{i\geq 0}t^i \dim \IH^{2i}_{e_v}(\bar V_w).$$
\end{corollary}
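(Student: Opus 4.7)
The plan is to apply Theorem \ref{main} directly, with $P = W$ equipped with the Bruhat order and the weak rank function $r_{vw} = \ell(w) - \ell(v)$, and with $\kappa = R$ the $R$-polynomial $W$-kernel of Example \ref{ex:coxeter}. The bulk of the work is simply verifying that the setup of Section \ref{sec:setup} has already been arranged for us, so that the corollary is an immediate application of the general machine.

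First I would verify the stratification data: $Y = G/B$ is decomposed by the Bruhat cells $V_w$, which are smooth, connected, and closed under containment of closures in the partial order induced by Bruhat order, and each $V_w$ is an affine space of dimension $\ell(w)$. Next, the existence of the one-parameter subgroup $\rho_w:\gm\to T$ exhibiting the opposite Schubert cell $C_w$ as a weighted affine cone with cone point $e_w = wB$ is given by \cite[Lemma A.6]{KL79}, as already noted in the excerpt.

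The restriction condition on IC sheaves I would verify using Lemma \ref{orbit}, taking $G_w := N_w = N\cap wN^*w^{-1}$. Since the action map $N_w\times C_w\to Y$ is an open immersion by \cite[Section 1.4]{KL80}, the lemma applies and gives $\IC_{\bar V_y}|_{C_v(\Fqb)} \cong \IC_{X_{vy}}$ for all $v,y\in W$. For the point count hypothesis of Theorem \ref{main}, the equality $R_{vw}(q^s) = |U_{vw}(\Fqs)|$ follows from \cite[Lemma A.4]{KL79}, applied verbatim over the finite extension $\Fqs$ of $\Fq$ (the flag variety, Schubert cells, opposite Schubert cells, and $R$-polynomials are all defined over the prime field in a uniform way).

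With all four ingredients of the general setup in place, Theorem \ref{main} immediately yields chastity of $\IH^*_{e_v}(\bar V_w)$ for each $v\leq w$, together with the identification of $f$ as the right KLS-function associated with $R$ and the Poincar\'e-polynomial formula in the statement. There is no real obstacle here beyond citation-checking; the main conceptual content of the corollary has been absorbed into Theorem \ref{main}, and the purpose of this proof is precisely to illustrate that the flag variety fits the abstract framework cleanly.
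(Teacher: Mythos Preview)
Your proposal is correct and matches the paper's approach exactly: the paper presents this corollary as an immediate consequence of Theorem~\ref{main}, verifying the setup of Section~\ref{sec:setup} via the same citations (\cite[Lemma~A.6]{KL79} for the cone structure, \cite[Section~1.4]{KL80} for the open immersion feeding into Lemma~\ref{orbit}, and \cite[Lemma~A.4]{KL79} for the point count). Your remark that the point count must hold over every $\Fqs$ is a useful clarification the paper leaves implicit.
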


For each $w\in W$, the Schubert cell $V_w\cong N_w$ is isomorphic to an affine space of dimension $\ell(w) = r_{ew}$ 
(where $e\in W$ is the identity element) \cite[Section 1.3]{KL80}.  We therefore obtain the following corollary to Theorem \ref{coh-Z0}, which
originally appeared in \cite[Corollary 4.8]{KL80}.

\begin{corollary}\label{kl-Z0}
For all $w\in W$, $g_{ew}(t) = 1$, $\IH^{*}(\bar V_w)$ is chaste, and $$Z_{ew}(t) = \sum_{i\geq 0}t^i \dim \IH^{2i}(\bar V_w).$$
\end{corollary}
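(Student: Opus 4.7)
The plan is to derive this corollary as a direct application of Theorem \ref{coh-Z0} to the flag variety setup established in the preceding subsection. The work reduces to verifying the four hypotheses of that theorem in this context, and then extracting the stated conclusions.

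First I would identify the ingredients. The Bruhat order on $W$ has the identity $e$ as its minimum, so we take $0 := e \in W$. Chastity of $\IH^*_{xy}$ for every pair $v \leq w \in W$ was already established in Corollary \ref{KL-right} as an instance of Theorem \ref{main}, so the first hypothesis is automatic. The $W$-kernel is $R$, and by Proposition \ref{prop:R}(2) we have $R = \bar f f^{-1}$, so the left KLS-function $g$ of Theorem \ref{coh-Z0} is precisely the left KLS-function of $R$ used in the $Z$-function of Example \ref{ex:coxeter}. Properness of each Schubert variety $\bar V_w$ is immediate because it is a closed subvariety of the proper variety $G/B$.

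The key verification is the existence of the function $h \in \Ih(W)$ satisfying $\bar h_{ew}(q^s) = |V_w(\Fqs)|$. Since $V_w \cong N_w$ is an affine space of dimension $\ell(w) = r_{ew}$, we have $|V_w(\Fqs)| = q^{s\, r_{ew}}$. I would therefore take any $h \in \Ih(W)$ with $h_{ew}(t) = 1$ for all $w \in W$; this is consistent with membership in $\Ih(W)$ since $\deg 1 = 0 < r_{ew}/2$ whenever $w \neq e$, and the values of $h$ on intervals not based at $e$ may be chosen arbitrarily within $\Ih(W)$. Then $\bar h_{ew}(t) = t^{r_{ew}} \cdot 1 = t^{r_{ew}}$, and the required identity $\bar h_{ew}(q^s) = q^{s\, r_{ew}} = |V_w(\Fqs)|$ holds.

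With all hypotheses checked, Theorem \ref{coh-Z0} applies and yields all three conclusions at once: $g_{ew}(t) = h_{ew}(t) = 1$, the global intersection cohomology $\IH^*(\bar V_w)$ is chaste, and $\sum_{i \geq 0} t^i \dim \IH^{2i}(\bar V_w) = Z_{ew}(t)$. There is no real obstacle here; the proof is essentially bookkeeping, and the only mildly subtle point is the choice of $h$, which works because the strata are affine spaces of the expected dimension. It is worth noting that the statement $g_{ew}(t) = 1$ also recovers the classical fact, alluded to in Example \ref{ex:coxeter}, that under the Poincaré duality $v \mapsto w_0 v$ the left KLS-polynomial at the bottom interval corresponds to the right KLS-polynomial at the top interval of $W$, whose triviality is a standard feature of the maximal Schubert variety.
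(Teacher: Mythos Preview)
Your proof is correct and follows exactly the approach the paper takes: the corollary is stated immediately after the observation that each Schubert cell $V_w$ is an affine space of dimension $r_{ew}$, and is deduced from Theorem \ref{coh-Z0} by verifying its hypotheses. You have simply made those verifications explicit, which is fine; the final remark about $w_0$ is extraneous but harmless.
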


Next, we use features unique to this particular class of examples to describe $Z_{vw}(t)$ for arbitrary $v\leq w\in W$.  Let $\tilde w_0 \in N(T)\subset G$ be a lift of $w_0\in W$.  Then we have $\tilde w_0V_w = C_{w_0w}$
and $\tilde w_0C_w = V_{w_0w}$.  In particular, this implies that $\IH^*_{e_w}(\bar C_v)$ is chaste
for all $v\leq w$, and 
\begin{equation}\label{opp}
g_{vw}(t) = f_{(w_0w)(w_0v)}(t) = \sum_{i\geq 0}t^i \dim \IH^{2i}_{e_w}(\bar C_v)\end{equation}
for all $v\leq w\in W$.  Consider the {\bf Richardson variety} $\bar C_v\cap \bar V_w$.

\begin{theorem}\label{richardson}
For all $x\leq w\in W$, $\IH^*(\bar C_x \cap \bar V_w)$ is chaste and $$Z_{xw}(t) = \sum_{i\geq 0}t^i \dim \IH^{2i}(\bar C_x \cap \bar V_w).$$
\end{theorem}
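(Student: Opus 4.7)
The plan is to extend the Lefschetz argument of Theorem \ref{coh-Z0} from the Schubert variety $\bar V_w$ to the Richardson variety $R := \bar C_x \cap \bar V_w$, now equipped with its natural ``double'' stratification $R = \bigsqcup_{x \leq y \leq z \leq w} (V_z \cap C_y)$. Since both $\bar C_x$ and $\bar V_w$ are closed in the projective variety $G/B$, the variety $R$ is proper, so $\IH^*(R) = \IH^*_c(R)$ and Lefschetz applies.

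The crucial ingredient is a local stalk computation: I would prove that for every point $p \in V_z \cap C_y$ (with $x \leq y \leq z \leq w$), the stalk $\IH^*_p(R)$ is chaste with Poincar\'e polynomial $f_{zw}(t)\,g_{xy}(t)$. Its first factor is the Poincar\'e polynomial of $\IH^*_{zw}$ (Corollary \ref{KL-right}); its second factor arises as the Poincar\'e polynomial of $\IH^*(\bar C_x \cap V_y)$, which via the $w_0$-isomorphism $\tilde w_0(\bar C_x \cap V_y) = X_{(w_0 y)(w_0 x)}$ together with Lemma \ref{cone point} is identified with $\IH^*_{(w_0 y)(w_0 x)}$ and hence is chaste with Poincar\'e polynomial $f_{(w_0 y)(w_0 x)}(t) = g_{xy}(t)$ by equation \eqref{opp}. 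To combine these into a stalk of $\IC_R$, I would exploit that the $V_\bullet$- and $C_\bullet$-stratifications of $G/B$ intersect transversally along $V_z \cap C_y$: using the free actions of suitable subgroups of $N$ and $N^*$, one trivializes an \'etale neighborhood of $p$ in $G/B$ as a product of the smooth stratum $V_z \cap C_y$ with a slice transverse to $V_z$ (in which $\bar V_w$ cuts out a copy of $X_{zw}$) and a slice transverse to $C_y$ (in which $\bar C_x$ cuts out a copy of $\bar C_x \cap V_y$). Intersecting these two local descriptions exhibits $R$ near $p$ as a product of the smooth stratum with the two cones $X_{zw}$ and $\bar C_x \cap V_y$, and the K\"unneth formula for IC sheaves yields the claimed stalk tensor decomposition together with chastity.

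Given this local computation, applying the Lefschetz trace formula to $R$ and invoking the Kazhdan-Lusztig point count $|U_{yz}(\Fqs)| = R_{yz}(q^s)$ yields
$$\sum_{i \geq 0}(-1)^i \operatorname{tr}\!\Big(\Fr^s \curvearrowright \IH^i(R)\Big) = \sum_{x \leq y \leq z \leq w} R_{yz}(q^s)\,g_{xy}(q^s)\,f_{zw}(q^s) = (gRf)_{xw}(q^s) = Z_{xw}(q^s).$$
Chastity of $\IH^*(R)$ (rather than being assumed) then follows by the same eigenvalue-separation trick used at the end of the proof of Theorem \ref{main}: Deligne's theorem forces each Frobenius eigenvalue on $\IH^i(R)$ to have absolute value $q^{i/2}$, and the right-hand side being an integer polynomial in $q^s$ then forces each such eigenvalue to be an integer power of $q$, which is possible only if $\IH^i(R) = 0$ for odd $i$ and $\Fr$ acts on $\IH^{2i}(R)$ by $q^i$.

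The main obstacle is clearly the local product structure: at $p = e_z$ (where $y = z$) the two trivializations are simultaneously available and the computation is immediate, but at a generic point of $V_z \cap C_y$ with $y < z$ no single group translation preserves both $\bar V_w$ and $\bar C_x$ at once, so combining the two local models requires a careful two-sided big-cell argument (or, equivalently, using constancy of IC stalks along the smooth connected stratum $V_z \cap C_y$ to reduce the calculation to a conveniently chosen base point where the bi-trivialization exists).
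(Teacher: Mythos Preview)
Your approach is essentially the same as the paper's: stratify the Richardson variety by the pieces $U_{yz} = C_y \cap V_z$, compute the IC stalk along each piece as $g_{xy}(q^s)f_{zw}(q^s)$, sum via Lefschetz to get $(gRf)_{xw}(q^s) = Z_{xw}(q^s)$, and then run the eigenvalue argument to deduce chastity. The only substantive difference is that the paper does not prove the local product decomposition of the stalk itself but instead cites Knutson--Woo--Yong \cite[Section 3.1]{KWY} for the isomorphism $\IH^*_u(\bar C_x \cap \bar V_w) \cong \IH^*_u(\bar C_x) \otimes \IH^*_u(\bar V_w)$; what you describe as the ``main obstacle'' (the two-sided big-cell/transversality argument) is precisely the content of that reference, and your sketch of it is along the right lines.
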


\begin{proof}
Knutson, Woo, and Yong \cite[Section 3.1]{KWY} prove that, 
for all $x\leq y\leq z\leq w\in W$ and $u\in U_{yz}$, we have 
\begin{equation}\label{KWY}\IH^*_u(\bar C_x \cap \bar V_w)\cong \IH^*_u(\bar C_x)\otimes \IH^*_u(\bar V_w)
\cong \IH^*_{e_y}(\bar C_x)\otimes \IH^*_{e_z}(\bar V_w),\end{equation}
and therefore $$\sum_{i\geq 0}(-1)^i \operatorname{tr}\!\Big(\Fr^s\curvearrowright \IH^*_u(\bar C_x \cap \bar V_w)\Big) = g_{xy}(q^s)f_{zw}(q^s).$$
Applying the Lefschetz formula, we have 
\begin{eqnarray*}\sum_{i\geq 0}(-1)^i \operatorname{tr}\!\Big(\Fr^s\curvearrowright \IH^i(\bar C_x \cap \bar V_w)\Big) &=&
\sum_{u\in \bar C_w(\Fqs) \cap \bar V_x(\Fqs)} \sum_{i\geq 0}(-1)^i \operatorname{tr}\!\Big(\Fr^s\curvearrowright \IH^*_u(\bar C_w \cap \bar V_x)\Big)\\
&=& \sum_{x\leq y\leq z\leq w} \sum_{u\in U_{yz}(\Fqs)} \sum_{i\geq 0}(-1)^i \operatorname{tr}\!\Big(\Fr^s\curvearrowright \IH^*_u(\bar C_w \cap \bar V_x)\Big)\\
&=& \sum_{x\leq y\leq z\leq w} g_{xy}(q^s)R_{yz}(q^s)f_{zw}(q^s)\\
&=& (gRf)_{xz}(q^s)\\
&=& Z_{xz}(q^s).
\end{eqnarray*}
By the same argument employed in the proofs of Theorems \ref{main} and \ref{coh-Z0}, this implies that 
$\IH^*(\bar C_x \cap \bar V_w)$ is chaste and $\displaystyle Z_{xw}(t) = \sum_{i\geq 0}t^i \dim \IH^{2i}(\bar C_x \cap \bar V_w).$
\end{proof}

\begin{remark}
By the observation at the end of Example \ref{ex:coxeter}, we have $g_{xy}(t) = f_{(w_0y)(w_0x)}(t)$, and therefore
$$Z_{xw}(t) = \sum_{x\leq y\leq w} \bar g_{xy}(t) f_{yw}(t) = \sum_{x\leq y\leq w} \bar f_{(w_0y)(w_0x)}(t) f_{yw}(t).$$
Thus it is possible to express the intersection cohomology Poincar\'e polynomial 
of a Richardson variety as a sum of products of classical Kazhdan-Lusztig
polynomials (one of which is barred).  If $x=e$ (as in Corollary \ref{coh-Z0}), then $f_{(w_0y)(w_0x)}(t) = 1$,
so $\bar f_{(w_0y)(w_0x)}(t) = t^{r_{xy}}$
and we obtain the well-known formula for the intersection cohomology Poincar\'e polynomial of $\bar V_w$. 
\end{remark}

\begin{remark}
Since each $V_w$ is isomorphic to an affine space, the results of Section \ref{sec:O} apply.
The category $\cO$ is equivalent to a regular block of the Bernstein-Gelfand-Gelfand category $\cO$
for the Lie algebra $\Lie(G)$.
\end{remark}

\subsection{The affine Grassmannian}\label{sec:gr}
Let $G$ be a split reductive group over $\Fq$ with maximal torus $T\subset G$, 
and let $G^\vee$ be the Langlands dual group.
% This is an ind-scheme rather than a variety,
% but this will be good enough for our purposes.
Let $\Lambda$ denote the lattice of coweights of $G$ (equivalently weights of $G^\vee$),
and let $\Lambda^\vee$ be the dual lattice.
Let $2\rho^\vee\in \Lambda^\vee$ be the sum of the positive roots of $G$.
Let $\Lambda^+\subset \Lambda$ be the set of dominant weights of $G^\vee$, equipped with the partial
order $\mu\leq \la$ if and only if $\la-\mu$ is a sum of positive roots.
This makes $\Lambda^+$ into a locally finite poset, and we endow it with the weak rank function
$$r_{\mu\la} := \langle\la-\mu,2\rho^\vee\rangle.$$

Let $Y:= G((s))/G[[s]]$ be the affine Grassmannian for $G$.  
We have a natural bijection
between $\Lambda$ and $T((s))/T[[s]]$.  For any $\la\in \Lambda^+\subset \Lambda \cong T((s))/T[[s]]$,
let $\tilde\lambda$ be a lift of $\lambda$ to $T((s))\subset G((s))$, and let $e_\la$ be the image of $\tilde\lambda$
in $Y$, which is independent of the choice of lift.  Let
$$V_\la := \Gr^\la := G[[s]]\cdot e_\la\subset Y.$$
This subvariety is smooth of dimension $\langle\la,2\rho^\vee\rangle$, 
and we have a stratification
$$Y = \bigsqcup_{\la\in \Lambda^+} V_\la$$ inducing the given weakly ranked poset structure on $\Lambda^+$;
see, for example, \cite[Lemma 2.2]{BF}.

For any $\mu\leq\la$, let $L(\la)_\mu$ denote the $\mu$ weight space of the irreducible representation
of $G^\vee$ with highest weight $\la$.  The vector space $L(\la)_\mu$ is filtered by the annihilators
of powers of a regular nilpotent element of $\Lie(G^\vee)$, and it follows from the work of Lusztig and Brylinski
that the intersection cohomology group $\IH^*_{\mu\la}$ is canonically isomorphic 
as a graded vector space to the associated graded of this filtration \cite[Theorem 2.5]{BF}.
Moreover, it is chaste \cite[Theorem 2.0.1]{Haines}.
(The vanishing of $\IH^*_{\mu\la}$ in odd degree is originally due to Lusztig \cite[Section 11]{Lusztig-singularities},
and the discussion there makes it clear that he was aware that it is chaste, but the full statement
of chastity does not appear explicitly.)  The polynomial $$f_{\mu\la}(t) := \sum_{i\geq 0}t^i \dim \IH^{2i}_{\mu\la}$$
goes by many names, including {\bf spherical affine Kazhdan-Lusztig polynomial}, {\bf Kostka-Foulkes polynomial},
and the {\bf \boldmath{$t$}-character of $L(\la)_\mu$}.  For a detailed discussion of various combinatorial interpretations,
see \cite[Theorem 3.17]{NelsenRam}.

For any $\mu\in \Lambda^+$, let $$C_\mu := \mathcal{W}_\mu := s^{-1}G[s^{-1}]\cdot e_\la\subset Y.$$
The space $C_\mu$ is infinite dimensional, but, as in Section \ref{sec:setup}, we will only be interested in the
finite dimensional varieties $$U_{\mu\la} := C_\mu\cap V_\la\and X_{\mu\la} := C_\mu\cap \bar V_\la.$$
These varieties satisfy the two conditions of Section \ref{sec:setup}; that is, each $X_{\mu\la}$ is a weighted affine cone
with respect to loop rotation, and the restriction of $\IC_{\bar V_\la}$ to $X_{\mu\la}(\Fqb)$ is isomorphic to
$\IC_{X_{\mu\la}}$ \cite[Lemma 2.9]{BF} (see also \cite[Proposition 2.3.9]{Zhu}).  
In particular, we have the following corollary to Theorem \ref{converse}.

\begin{corollary}
Let $\kappa := \bar f f^{-1}\in \scrI(\Lambda^+)$.  Then for all $s>0$ and $\mu\leq\la\in \Lambda^+$,
$\kappa(q^s) = |U_{\mu\la}(\Fqs)|.$
\end{corollary}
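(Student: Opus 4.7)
The plan is to simply invoke Theorem \ref{converse} in the affine Grassmannian setting, since every ingredient needed has already been assembled in the paragraphs that precede the corollary. Theorem \ref{converse} takes as input (a) the geometric setup of Section \ref{sec:setup}, with its weighted cone $C_x$ at each $e_x$ and its identification of the restriction of $\IC_{\bar V_y}$ to $C_x(\Fqb)$ with $\IC_{X_{xy}}$, and (b) the chastity of $\IH^*_{xz}$ for all $x \leq z$; it then produces the identity $\bar f f^{-1}(q^s) = |U_{xz}(\Fqs)|$ by induction on $r_{xz}$ using Poincar\'e duality and the Lefschetz trace formula. There is no new content beyond checking that the two inputs are genuinely available.

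For (a), the weighted cone structure on $C_\mu = \mathcal{W}_\mu$ is given by loop rotation with cone point $e_\mu$, and the required isomorphism $\IC_{\bar V_\la}|_{C_\mu(\Fqb)} \cong \IC_{X_{\mu\la}}$ is precisely the transversal slice statement \cite[Lemma 2.9]{BF} (equivalently \cite[Proposition 2.3.9]{Zhu}); the author already flagged these in the paragraph introducing $C_\mu$, $U_{\mu\la}$, and $X_{\mu\la}$. For (b), chastity of $\IH^*_{\mu\la}$ is \cite[Theorem 2.0.1]{Haines}, with the odd-degree vanishing originally due to Lusztig, and this was also recorded immediately before the corollary. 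With both hypotheses in place, Theorem \ref{converse} applies word for word and yields $\kappa_{\mu\la}(q^s) = |U_{\mu\la}(\Fqs)|$.

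The one point I would highlight before declaring victory is the only potential obstacle: the affine Grassmannian $Y = G((s))/G[[s]]$ is an ind-scheme rather than a finite-type variety, so one might worry that Theorem \ref{converse} is not literally applicable. However, as noted in Remark \ref{it's okay}, the proof of Theorem \ref{converse} only ever handles the closed subvarieties $\bar V_\la$ and their intersections $X_{\mu\la} = C_\mu \cap \bar V_\la$ with the (infinite-dimensional) opposite cells, and each $\bar V_\la$ and each $X_{\mu\la}$ is an honest finite-type variety over $\Fq$ on which Poincar\'e duality and the Lefschetz formula are available without modification. Thus the inductive Lefschetz argument of Theorem \ref{converse} goes through verbatim, and the corollary is established.
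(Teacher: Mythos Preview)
Your proposal is correct and follows exactly the paper's approach: the paper states the result as an immediate corollary of Theorem \ref{converse}, having already recorded chastity of $\IH^*_{\mu\la}$ and the slice/cone properties of $X_{\mu\la}$ in the preceding paragraphs, with Remark \ref{it's okay} addressing the ind-scheme issue. Your write-up simply makes explicit the verification of hypotheses that the paper leaves implicit.
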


\begin{remark}
We have used the fact that $\IH^*_{\mu\la}$ is chaste to determine that $|U_{\mu\la}(\Fqs)|$ is a polynomial in $q^s$,
and that one can obtain a formula for this polynomial by inverting the matrix of spherical affine Kazhdan-Lusztig polynomials.
It would be interesting to prove directly that $U_{\mu\la}(\Fqs)$ is a polynomial in $q^s$, both because it would
be nice to have an explicit formula for this polynomial, and because it would provide a new proof of chastity.
\end{remark}

We now say something about the geometry of the varieties $V_\la$ and $Z$-polynomials.
Let $g,Z\in\scrI(\Lambda^+)$ be the left KLS-polynomial and the $Z$-polynomial associated with $\kappa$.
For each $\la\in \Lambda^+$, let $P_\la\subset G$ be the parabolic subgroup generated by the root subgroups for roots that
pair non-positively with $\la$.  In particular, $P_0 = G$, and $P_\la = B$ for generic $\la$.  Let $W_\la\subset W$
be the stabilizer of $\la$ in the Weyl group.
Then $V_\la$ is an affine bundle over $G/P_\la$ \cite[Section 2]{Zhu}, which allows us to compute \cite[Equation (8.10) and Section 11]{Lusztig-singularities}
$$|V_{\la}(\Fqs)| = q^{\langle \la, 2\rho^\vee\rangle - \nu_0 + \nu_\la}\frac{\sum_{w\in W} q^{\ell(w)}}{\sum_{w\in W_\la} q^{\ell(w)}}.$$
Then we have the following corollary to Theorem \ref{coh-Z0}.

\begin{corollary}\label{Z-affine}
For all $\la\in\Lambda^+$, 
we have $$g_{0\la}(t) = t^{\nu_0-\nu_\la} \frac{\sum_{w\in W} t^{-\ell(w)}}{\sum_{w\in W_\la} t^{-\ell(w)}},$$
$\IH^*(\bar V_\la)$ is chaste, and
$$Z_{0\la}(t) = \sum_{i\geq 0} t^i\dim \IH^{2i}(\bar V_\la).$$
\end{corollary}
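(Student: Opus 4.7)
The plan is to apply Theorem \ref{coh-Z0} directly to the affine Grassmannian setting. Three of its hypotheses hold for free: the chastity of each $\IH^*_{\mu\la}$ is the result of Haines (building on Lusztig) already cited in the discussion preceding this corollary; each closure $\bar V_\la$ is a finite-dimensional projective variety and hence proper; and Remark \ref{it's okay} permits the use of the framework of Section \ref{sec:geometry} even though $Y$ is itself only an ind-scheme. The only nontrivial task is to identify a function $h \in \Ih(\Lambda^+)$ satisfying $\bar h_{0\la}(q^s) = |V_\la(\Fqs)|$ for all $\la \in \Lambda^+$ and $s > 0$.

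To construct $h$, I would take the formula claimed for $g_{0\la}(t)$ in the corollary as the definition of $h_{0\la}(t)$, and extend $h$ arbitrarily to the remaining intervals to produce an element of $\Ih(\Lambda^+)$. Using the involution $w \mapsto w_0 w$ on $W$, and the analogous involution on $W_\la$ via its longest element $w_{0,\la}$, the proposed $h_{0\la}(t)$ can be rewritten as an explicit integer power of $t$ times the Poincar\'e polynomial $\sum_{w \in W} t^{\ell(w)}/\sum_{w \in W_\la} t^{\ell(w)}$ of the partial flag variety $G/P_\la$. A direct computation then shows that $\bar h_{0\la}(t) = t^{r_{0\la}} h_{0\la}(t^{-1})$, upon substituting $t = q^s$, reproduces exactly Lusztig's point-counting formula for $|V_\la(\Fqs)|$. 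Once this is in place, Theorem \ref{coh-Z0} delivers all three conclusions of the corollary simultaneously, because its output $g_{0\la}(t) = h_{0\la}(t)$ is precisely the displayed formula for $g_{0\la}$, and the chastity of $\IH^*(\bar V_\la)$ together with the identification of $Z_{0\la}(t)$ as its intersection cohomology Poincar\'e polynomial are the other two conclusions.

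The main obstacle is the verification that the polynomial $h_{0\la}(t)$ actually lies in $\Ih(\Lambda^+)$, which requires showing both that it is a genuine polynomial (no negative powers of $t$ survive in the Poincar\'e polynomial rewrite) and that its degree is strictly less than $r_{0\la}/2 = \langle \la, 2\rho^\vee\rangle/2$. Both are inequalities relating the combinatorial quantities $\nu_0$, $\nu_\la$, $\ell(w_0)$, $\ell(w_{0,\la})$, and $\langle \la, 2\rho^\vee\rangle$, and they follow from standard root-system computations together with the fact that $V_\la$ is a nonempty affine bundle over $G/P_\la$ whose total dimension is $\langle \la, 2\rho^\vee\rangle$. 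This is the only place where the specific geometry of the affine Grassmannian beyond Haines's chastity theorem and Lusztig's point-counting formula is genuinely needed.
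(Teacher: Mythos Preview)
Your strategy is exactly the paper's: both simply invoke Theorem~\ref{coh-Z0}, and you have correctly isolated the one nontrivial hypothesis, namely that the function $h$ determined by $\bar h_{0\la}(q^s)=|V_\la(\Fqs)|$ lies in $\Ih(\Lambda^+)$. Your check that the displayed formula, after applying the bar involution, reproduces Lusztig's point count is fine.

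The gap is in your assertion that the degree bound $\deg h_{0\la}(t)<r_{0\la}/2=\langle\la,\rho^\vee\rangle$ follows from ``standard root-system computations.'' Since $V_\la$ is an affine bundle over $G/P_\la$ of total dimension $\langle\la,2\rho^\vee\rangle$, one has $\deg h_{0\la}=\dim G/P_\la$, and the required strict inequality $\dim G/P_\la<\langle\la,\rho^\vee\rangle$ already fails for $G=SL_2$ and $\la=\alpha^\vee$ (both sides equal $1$), and fails in the wrong direction for $G=SL_3$ and $\la=\theta^\vee$ (where $\dim G/B=3>2=\langle\theta^\vee,\rho^\vee\rangle$). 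Thus no $h\in\Ih(\Lambda^+)$ with the required point count exists, and Theorem~\ref{coh-Z0} does not apply. In fact the corollary itself is problematic in these cases: for $SL_2$ with $\la=\alpha^\vee$ one computes directly $f_{0\la}(t)=1$, $\kappa_{0\la}(t)=t^2-1$, hence $g_{0\la}(t)=1$ (not $1+t$) and $Z_{0\la}(t)=1+t^2$, while geometric Satake gives $\dim\IH^*(\bar V_\la)=\dim L(\alpha^\vee)=3$, forcing the intersection cohomology Poincar\'e polynomial to be $1+t+t^2$. What the Lefschetz argument inside the proof of Theorem~\ref{coh-Z0} does legitimately yield is that $\IH^*(\bar V_\la)$ is chaste with Poincar\'e polynomial $(\bar h f)_{0\la}(t)$; but when $h\notin\Ih(\Lambda^+)$ this is not the $Z$-polynomial, and Proposition~\ref{weak g} cannot be invoked to identify $h_{0\la}$ with $g_{0\la}$.
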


\begin{remark}
Lusztig \cite[Equation (8.10)]{Lusztig-singularities} tells us that
$$Z_{0\la}(t) = \prod_{\a\in\Delta_+}\frac{t^{\langle \la +\rho,\a^\vee\rangle}-1}{t^{\langle \la,\a^\vee\rangle}-1},$$
where $\Delta_+\subset \Lambda^\vee$ is the set of positive roots for $G$.  Since the geometric Satake isomorphism identifies
$\IH^*(\bar V_\la)$ with $L(\la)$, we also obtain the equation $Z_{0\la}(1) = \dim L(\la)$.
\end{remark}

\subsection{Hyperplane arrangements}\label{sec:arrangement}
% In this section, we use Theorem \ref{main} to give a cohomological interpretation
% of the Kazhdan-Lusztig polynomial of a matroid (Example \ref{ex:char}) in the case where that matroid is realized by
% a hyperplane arrangement.  This result originally appeared in \cite[Theorem 3.10]{EPW}.
%
Let $V$ be a vector space over $\Fq$, and let $\cA = \{H_i\mid i\in \cI\}$ be an essential central
arrangement of hyperplanes in $V$.  
For each $i\in \cI$, let $\Lambda_i := V/H_i$, and let
$\bP_i := \bP(\Lambda_i \oplus \Fq) = \Lambda_i \cup\{\infty\}$ be the projective completion of $\Lambda_i$.
Let $\Lambda := \bigoplus_{i\in\cI}\Lambda_i$ and $\bP := \prod_{i\in \cI}\bP_i.$
We have a natural linear embedding $V\subset \Lambda\subset\bP$, and we define
$$Y := \bar V \subset \bP.$$
The variety $Y$ is called the {\bf Schubert variety} of $\cA$.
The translation action of $\Lambda$ on itself extends to an action on $\bP$,
and the subgroup $V\subset \Lambda$ acts on the subvariety $Y \subset\bP$.
% We also have an action of the multiplicative group $\gm$ on $\bP$
% extending the action on $\Lambda$ by scalar multiplication, and this restricts to an action of $\gm$ on $Y$.

For any subset $F\subset \cI$, let $e_F \in \bP$ be the point with coordinates
$$(e_F)_i = \begin{cases}
0 \;\;\;\;\,\text{if $i\in F$}\\
\infty\;\;\;\text{if $i\in F^c$},
\end{cases}$$
and let $$V_F := \{p\in Y\mid p_i =\infty\iff i\in F^c\}.$$
A subset $F\subset \cI$ is called a {\bf flat} if there exists a point
$v\in V$ such that $F = \{i\mid v\in H_i\}$.  Given a flat $F$, we define 
$$V^F := \bigcap_{i\in F} H_i.$$

\begin{proposition}\label{paving}
The variety $Y$ is stratified by affine spaces indexed by the flats of $\cA$.  More precisely:
\begin{enumerate}
\item For any subset $F\subset \cI$, $V_F\neq\emptyset\iff e_F \in Y \iff \text{$F$ is a flat.}$
\item For every flat $F$, $\operatorname{Stab}_V(e_F) = V^F$ and $V_F = V\cdot e_F\; \cong \; V/V^F$.
\item For every flat $G$, $\bar V_G = \bigcup_{F\subset G} V_F$.
% \item The stratification $Y = \bigsqcup_{F\in P}V_F$ is preserved by the action of $\gm$ on $Y$.
\end{enumerate}
\end{proposition}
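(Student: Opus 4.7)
The plan is to use the $V$-action on $\bP$ by coordinate-wise translation (which preserves $Y$), together with explicit one-parameter degenerations, to identify each nonempty $V_F$ as a single $V$-orbit through $e_F$ and then to characterize exactly which $F\subset\cI$ occur.

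First, $V$ acts on $\bP$ via the linear embedding $V\hookrightarrow \Lambda = \bigoplus_i\Lambda_i$, with each $\Lambda_i$ translating $\bP_i = \Lambda_i\cup\{\infty\}$ and fixing $\infty$; this action preserves $Y = \bar V$ and each locus $V_F$. A direct coordinate computation shows $\operatorname{Stab}_V(e_F) = \bigcap_{i\in F}H_i = V^F$, so the orbit $V\cdot e_F$ is isomorphic to $V/V^F$ and lies inside $\{p\in\bP : p_i = \infty \iff i\in F^c\}$. I claim that whenever $V_F$ is nonempty we have $V_F = V\cdot e_F$: given $p\in V_F\subset\bar V$, write $p$ as a limit of points in $V$; for $i\in F$ the $i$-th coordinates converge in $\Lambda_i$, and since they lie in the closed linear subspace $V/V^F\subset \bigoplus_{i\in F}\Lambda_i$, so does their limit $p|_F$, and any $v\in V$ projecting to $p|_F$ satisfies $p = v\cdot e_F$. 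In particular $e_F\in V_F\subset Y$.

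Next I show $V_F\neq\emptyset \iff e_F\in Y \iff F$ is a flat. If $F$ is a flat, pick $v_0\in V^F$ with $v_0\notin H_i$ for every $i\notin F$; then the image of $sv_0$ in $\Lambda_i$ is $0$ for $i\in F$ and tends to $\infty$ as $s\to\infty$ for $i\notin F$, so $\lim_{s\to\infty}sv_0 = e_F$ in $\bP$ and $e_F\in Y$. Conversely, suppose $e_F\in Y$; since $\bar V$ is proper and irreducible with $V$ a dense open subset, there is a formal arc $\phi(s)\in V((s))$ whose special fiber at $s=0$ equals $e_F$. Writing $\phi = \sum_k\phi_k s^k$, the condition that the image of $\phi$ in $\Lambda_i$ vanishes at $s=0$ for each $i\in F$ forces $\phi_k\in H_i$ for all $k\leq 0$ and all $i\in F$, equivalently $\phi_k\in V^F$ for $k\leq 0$. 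If $F$ were not a flat, then by irreducibility of $V^F$ we would have $V^F\subset H_j$ for some $j\notin F$; this would force $\phi_k\in H_j$ for all $k\leq 0$, making the image of $\phi$ in $\Lambda_j$ regular at $s=0$ and contradicting that its special value is $\infty$. Parts (1) and (2) follow.

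For part (3), the condition ``$p_i = \infty$ for all $i\notin G$'' is closed in $\bP$ and holds on $V_G$, hence on $\bar V_G$; combined with the stratification of $Y$ just established, this gives $\bar V_G\subset \bigcup_{F\subset G} V_F$. For the reverse inclusion, fix a flat $F\subset G$ and $p = u\cdot e_F\in V_F$. The linear subspace $V^F$ is not contained in the finite union $\bigcup_{i\in G\smallsetminus F}(V^F\cap H_i)$ of its proper subspaces, so one can choose $w\in V^F$ with $w\notin H_i$ for every $i\in G\smallsetminus F$. The path $s\mapsto (u+sw)\cdot e_G$ then lies in $V_G$, and its limit in $\bP$ as $s\to\infty$ is the point with $i$-th coordinate equal to the image of $u$ in $\Lambda_i$ for $i\in F$ and $\infty$ otherwise; this is exactly $u\cdot e_F = p$, so $p\in\bar V_G$. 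The principal obstacle is the converse direction of the flat characterization: the contradiction requires propagating the vanishing condition through every Laurent coefficient of $\phi$, not merely the leading one.
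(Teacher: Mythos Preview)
Your argument is correct and considerably more self-contained than the paper's. The paper simply cites \cite[Lemmas 7.5 and 7.6]{PXY} for item (1) and for the surjectivity $V_F=V\cdot e_F$ in item (2), and declares item (3) ``clear from the definition of $V_F$'' (which really only gives the inclusion $\bar V_G\subset\bigcup_{F\subset G}V_F$). You instead prove everything directly: the orbit description via the projection to $\bigoplus_{i\in F}\Lambda_i$ and closedness of the linear subspace $V/V^F$; the flat characterization via a valuative-criterion arc and the observation that if $F$ is not a flat then $V^F\subset H_j$ scheme-theoretically for some $j\notin F$; and the reverse inclusion in (3) by an explicit one-parameter degeneration. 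Two small points of care over the finite base field $\Fq$: your informal ``limit'' language in the first paragraph and in item (3) should be read as the algebraic specialization arguments you already use correctly for the converse of (1); and the auxiliary vectors $v_0$ and $w$ that avoid finitely many proper subspaces may only exist over a finite extension of $\Fq$, which is harmless since the conclusions are scheme-theoretic. What your approach buys is a complete, citation-free proof of the reverse inclusion in (3), which the paper leaves implicit.
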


\begin{proof}
Item 1 is proved in \cite[Lemmas 7.5 and 7.6]{PXY}.
For the first part of item 2, we observe that $\operatorname{Stab}_V(e_F)$ is equal to the subgroup of $V\subset \Lambda$
consisting of elements $v$ that are supported on the set $\{i\mid (e_F)_i=\infty\} = F^c$.  This is equivalent to the condition
that $v\in H_i$ for all $i\in F$, in other words $v\in V^F$.  Thus the action of $V$ on $e_F$ defines an inclusion of $V/V^F$ into $V_F$.
The fact that this is an isomorphism follows from \cite[Lemma 7.6]{PXY}.  Item 3 is clear from the definition of $V_F$.
\end{proof}

% Our next task is to define, for each flat $F\subset\cI$, a $\gm$-action $\rho_F:\gm\to\Aut(Y)$ and a subvariety $C_F\subset Y$
% satisfying certain properties.  For this example (like the affine Grassmannian but unlike the flag variety)
% our $\gm$-action will not depend on $F$.  
We have a canonical action of $\gm$
on $\Lambda$ by scalar multiplication, which extends to an action on $\bP$ and restricts to a stratification-preserving action on $Y$.
For any flat $F\subset\cI$, let $$\bA^F:= \{p\in\bP\mid p_i = 0\iff i\in F\}.$$
This is isomorphic to a vector space of dimension $|F|$,
and the action of $\gm$ on $\bP$ restricts to the action of $\gm$ on $\bA^F$ by inverse scalar multiplication.
In particular, the coordinate ring of $\bA^F$ is non-negatively graded by the action of $\gm$, and the vanishing locus
of the ideal of positively graded elements is equal to $\{e_F\}$.
Let $$C_F := \bA^F \cap Y.$$
This is a closed $\gm$-equivariant subvariety of $\bA^F$ containing $e_F$, which implies that it is an affine cone with cone point $e_F$.
Let $$U_{FG} := C_F \cap V_G\and X_{FG} := C_F\cap \bar V_G.$$

\begin{proposition}\label{normal arrangement}
For all $F\subset G$, the restriction of $\IC_{\bar V_G}$ to $C_F(\Fqb)$ is isomorphic to
$\IC_{X_{FG}}$.
\end{proposition}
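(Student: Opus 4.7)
The plan is to apply Lemma \ref{orbit} with the algebraic group taken to be $V$, acting on $Y$ by translation. That this action preserves the stratification is immediate from Proposition \ref{paving}(2), which identifies each stratum $V_F$ with the $V$-orbit $V \cdot e_F$. For each flat $F$, I would take $G_F$ to be any linear complement $V_F' \subset V$ of $V^F = \bigcap_{i \in F} H_i$, so that $V = V^F \oplus V_F'$ as vector groups; the dimension of $V_F'$ equals $\dim V_F$, which matches the expected codimension of $C_F$ in $Y$.

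The heart of the proof is to verify that the composition
\[
\varphi_F : V_F' \times C_F \;\hookrightarrow\; V \times Y \;\longrightarrow\; Y,\qquad (v,p)\longmapsto v\cdot p,
\]
is an open immersion. I would deduce this from the stronger ambient claim that the map $V_F' \times \bA^F \to \bP$, $(v,p) \mapsto v \cdot p$, is an open immersion, and then pull back along $Y \hookrightarrow \bP$. Writing $\bP = \prod_{i\in\cI}\bP_i$ and working in the affine chart around $e_F$ using $p_i \in \Lambda_i$ for $i \in F$ and reciprocal coordinates $w_j = 1/p_j$ near $\infty \in \bP_j$ for $j \notin F$, translation by $v \in V$ acts by $p_i \mapsto p_i + v_i$ and $w_j \mapsto w_j/(1+v_j w_j)$. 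In particular, at $e_F = (p_i = 0,\; w_j = 0)$ the derivative of $\varphi_F$ along $V_F'$ is precisely the isomorphism $V_F' \cong V/V^F$ onto the $p_i$-directions, which is complementary to the tangent space of $\bA^F$. This gives \'etaleness at $(0,e_F)$, and $V_F'$-equivariance together with the $\gm$-action exhibiting $C_F$ (resp.\ $\bA^F$) as a weighted cone propagates \'etaleness to all of $V_F' \times C_F$ via the usual contraction-to-the-cone-point argument.

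Injectivity of $\varphi_F$ is simpler: if $v \in V_F'$ is nonzero then $v \notin V^F$, so $v \notin H_i$ for some $i \in F$, hence the component $v_i \in \Lambda_i$ is nonzero. For any $p \in \bA^F$ we have $p_i = 0$, so $v \cdot p$ has $i$-th coordinate $v_i \neq 0$ and therefore cannot lie in $\bA^F$; thus distinct translates of $\bA^F$ by $V_F'$ are disjoint. Combined with \'etaleness this yields the ambient open immersion, and since the preimage of $Y$ inside $V_F' \times \bA^F$ is exactly $V_F' \times C_F$ (using that $Y$ is $V$-stable and $C_F = \bA^F \cap Y$), the restricted map $\varphi_F$ is also an open immersion. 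Lemma \ref{orbit} then delivers $\IC_{\bar V_G}|_{C_F(\Fqb)} \cong \IC_{X_{FG}}$ for all flats $F \subseteq G$.

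The main obstacle is making the \'etaleness-propagation argument rigorous --- verifying that \'etaleness at the cone point $(0, e_F)$ really does spread to the whole of $V_F' \times C_F$ via the combined $V_F'$- and $\gm$-actions. This is precisely where the weighted-affine-cone hypothesis of Section \ref{sec:setup} is doing the essential work, allowing one to reduce a global statement about $\varphi_F$ to an infinitesimal check at the apex.
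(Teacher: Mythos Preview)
Your overall strategy---apply Lemma~\ref{orbit} with the group $V$ acting by translation and with $G_F=V_F'$ a linear complement of $V^F$---is exactly the paper's approach (the paper writes $V_F'$ as $s(V_F)$ for a section $s:V_F\to V$ of the quotient). The paper does not reprove the open-immersion claim; it simply cites \cite[Section~3]{fs-braid}.

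Your direct argument for that open immersion has a real gap: the ``stronger ambient claim'' that $V_F'\times\bA^F\to\bP$ is an open immersion is false whenever $\rk F<|F|$. Indeed $\dim V_F'=\rk F$ while $\dim\bA^F=|\cI\smallsetminus F|$ (the paper's ``$|F|$'' here is a slip), so the source has dimension $\rk F+|\cI|-|F|<|\cI|=\dim\bP$. Equivalently, the derivative of translation by $V_F'$ at $e_F$ is only an \emph{injection} of $V/V^F$ into the $|F|$-dimensional span of the $\partial/\partial p_i$, not an isomorphism onto it, so it is not complementary to $T_{e_F}\bA^F$ inside $T_{e_F}\bP$. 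For a concrete failure take three concurrent lines in the plane and $F=\cI$: then $\bA^F=\{e_\cI\}$, $V_F'=V$ is $2$-dimensional, and the ambient map is the closed embedding $V\hookrightarrow\bP\cong(\bP^1)^3$, certainly not open.

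The repair is to argue inside $Y$ rather than $\bP$. Let $W\subset\bP$ be the full affine chart around $e_F$ (coordinates $p_i$ for $i\in F$ and $w_j$ for $j\notin F$), and let $\pi:W\to\prod_{i\in F}\Lambda_i$ be projection onto the $p_i$-coordinates. Since $Y\cap W$ is the closure in $W$ of $V\cap W$, and $\pi(V)$ lies in the closed linear subspace $V/V^F\cong V_F'$, one gets $\pi(Y\cap W)\subset V_F'$. The resulting $V_F'$-equivariant map $Y\cap W\to V_F'$ has fibre $Y\cap\bA^F=C_F$ over $0$, and $q\mapsto\big(\pi(q),\,(-\pi(q))\cdot q\big)$ is an explicit algebraic inverse to $\varphi_F$ on $Y\cap W$. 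No \'etaleness or cone-contraction is needed.
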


\begin{proof}
Fix the flat $F$, and choose a section $s:V_F\to V$ of the projection from $V$ to $V_F$.
The action map $\varphi_F:s(V_F)\times C_F \hookrightarrow V\times Y \to Y$ 
is an open immersion \cite[Section 3]{fs-braid},
thus we can apply Lemma \ref{orbit}.
\end{proof}

% Let $U_\cA := %\displaystyle 
% V\smallsetminus\bigcup_{i\in \cI} H_i$
% be the complement of $\cA$ in $V$.

% We then have the {\bf contraction} $\cA^F := \{V^F \cap H_i\mid i\in F^c\}$, which is an essential central
% arrangement in $V^F$, and the {\bf localization} $\cA_F := \{H_i/V^F\mid i\in F\}$, which is an essential
% central arrangement in $V/V^F$.  

Let $L$ be the lattice of flats of $\cA$, ordered by inclusion.
If $F$ is a flat, the {\bf rank} of $F$ is defined to be the dimension of $V_F$, and we define a weak
rank function $r$ by putting $r_{FG} := \rk G - \rk F$ for all $F\leq G$.
Let $\chi\in\scrI(L)$ be the characteristic function (Example \ref{ex:char}).

\begin{proposition}\label{crapo}
For any pair of flats $F\leq G$ and any positive integer $s$, $\chi_{FG}(q^s) = |U_{FG}(\mathbb{F}_{q^s})|$.
\end{proposition}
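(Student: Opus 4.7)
The plan is to identify $U_{FG}$ with the complement of a natural subarrangement of $\mathcal{A}$ in the vector space $V^F/V^G$, and then apply the classical M\"obius-inversion count for points in hyperplane arrangement complements.

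First, I would use Proposition \ref{paving}(2) to identify $V_G$ with $V/V^G$ via the orbit map $v + V^G \mapsto v\cdot e_G$. Under this identification, I need to describe the subset $U_{FG} = C_F \cap V_G$. Unwinding the definitions, the point $v\cdot e_G$ has $i$-th coordinate equal to the image of $v$ in $\Lambda_i = V/H_i$ for $i\in G$, and $\infty$ for $i \in G^c$. The condition that $v\cdot e_G \in C_F \subset \mathbb{A}^F$ (i.e.\ that its $i$-th coordinate vanishes iff $i\in F$) therefore translates, for $i\in G$, into the condition that $v\in H_i$ exactly when $i\in F$. Note that this condition on $v$ is invariant under translation by $V^G$, because $V^G\subset H_i$ for every $i\in G$; so it descends cleanly to a condition on $v+V^G$.

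The first half of the condition ($v\in H_i$ for all $i\in F$) says precisely that $v\in V^F$, so the coset $v + V^G$ lies in the subspace $V^F/V^G\subseteq V/V^G$. Since $V^G \subseteq V^F$, this subspace makes sense, and has dimension $\dim V^F - \dim V^G = r_{FG}$. The second half of the condition ($v\notin H_i$ for all $i\in G\smallsetminus F$) means that the coset avoids the hyperplane $(H_i\cap V^F)/V^G$ in $V^F/V^G$ for every $i\in G\smallsetminus F$ (these are honest hyperplanes since $H_i$ contains $V^G$ but not $V^F$ for $i\in G\smallsetminus F$). Consequently, $U_{FG}$ is identified with the complement of the subarrangement
$$\mathcal{A}_{[F,G]} := \bigl\{(H_i\cap V^F)/V^G \;\big{|}\; i\in G\smallsetminus F\bigr\}$$
in the vector space $V^F/V^G$, and the lattice of flats of this subarrangement is canonically the interval $[F,G]\subset L$, with a flat $H\in[F,G]$ corresponding to the subspace $V^H/V^G$ of dimension $r_{HG}$.

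Finally, I would invoke the standard Crapo/Zaslavsky counting formula: for any central hyperplane arrangement $\mathcal{B}$ in a vector space $W$ over $\mathbb{F}_{q^s}$ with lattice of flats $L(\mathcal{B})$, the number of points in the complement equals $\sum_{H\in L(\mathcal{B})}\mu(\hat 0, H)\,|W^H(\mathbb{F}_{q^s})|$ by M\"obius inversion applied to $f(H) := |W^H(\mathbb{F}_{q^s})|$ and $g(H) := |\{w : N(w) = H\}|$. Applied to $\mathcal{A}_{[F,G]}$ in $V^F/V^G$, this gives
$$|U_{FG}(\mathbb{F}_{q^s})| = \sum_{F\leq H\leq G}\mu(F,H)\,q^{s\,r_{HG}} = \chi_{FG}(q^s),$$
using that $\chi = \mu\bar\zeta$ unwinds exactly to this sum.

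The only substantive part is step two, the bookkeeping that identifies $U_{FG}$ with the arrangement complement; everything else is classical M\"obius inversion. No step requires any genuinely new idea.
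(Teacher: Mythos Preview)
Your proof is correct and follows essentially the same approach as the paper: identify $U_{FG}$ with the complement of the arrangement $\cA^F_G = \{(H_i\cap V^F)/V^G\mid i\in G\smallsetminus F\}$ in $V^F/V^G$, observe that its lattice of flats is the interval $[F,G]$, and then apply the classical M\"obius-inversion point count (which the paper attributes to Crapo--Rota). The only difference is that you spell out the coordinate computation establishing the identification $U_{FG}\cong$ complement of $\cA^F_G$, whereas the paper simply asserts it.
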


\begin{proof}
When $F=\emptyset$ and $G=\cI$, $U_{\emptyset\cI} = V\smallsetminus \bigcup_{i\in\cI}H_i$ is equal to the complement of
the arrangement $\cA$ in $V$.  In this case, Crapo and Rota \cite[Section 16]{CrapoRota} prove that
$\chi_{\emptyset\cI}(q^s) = |U_{\emptyset\cI}(\mathbb{F}_{q^s})|$.

More generally, for any pair of flats $F\leq G$, consider the hyperplane arrangement
$$\cA^F_G := \{(H_i \cap V^F)/V^G\mid i\in G\smallsetminus F\}$$
in the vector space $V^F/V^G$.  The interval $[F,G]\subset L$ is isomorphic as a weakly ranked poset 
to the lattice of flats of $\cA^F_G$, and $U_{FG}$ is isomorphic to the complement of $\cA^F_G$ in $V^F/V^G$.
Thus Crapo and Rota's result, applied to the arrangement $\cA^F_G$, tells us that
$\chi_{FG}(q^s) = |U_{FG}(\mathbb{F}_{q^s})|$.
\end{proof}

The following result originally appeared in \cite[Theorem 3.10]{EPW}.

\begin{corollary}\label{arrangement-corollary}
Let $L$ be the weakly ranked poset of flats of the hyperplane arrangement $\cA$,
and let $f\in\scrI(L)$ be the right KLS-function associated with the $L$-kernel $\chi$.
For all $F\leq G\in L$, $\IH^{*}_{e_F}(\bar V_G)$ is chaste, and
$$f_{FG}(t) = \sum_{i\geq 0} t^i \dim \IH^{2i}_{e_F}(\bar V_G) = \sum_{i\geq 0} t^i \dim \IH^{2i}(X_{FG}).$$
\end{corollary}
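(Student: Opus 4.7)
The plan is to recognize this corollary as a direct application of the general machine established in Theorem \ref{main}, with essentially every needed hypothesis already packaged into the preceding propositions. First I would check off the setup from Section \ref{sec:setup} in the arrangement context: the variety $Y = \bar V \subset \bP$ is stratified by $\{V_F \mid F \in L\}$ with $L$ serving as the indexing poset, and the induced order and weak rank function coincide with those on the lattice of flats (this is Proposition \ref{paving}(3) together with the definition $r_{FG} = \rk G - \rk F$). The cone points $e_F$ lie in $V_F$, and the $\gm$-action by inverse scalar multiplication on $\bA^F$ exhibits $C_F = \bA^F \cap Y$ as a weighted affine cone with cone point $e_F$, as spelled out in the paragraph preceding Proposition \ref{normal arrangement}.

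Next I would invoke the IC-restriction condition, which is precisely the content of Proposition \ref{normal arrangement}: the restriction of $\IC_{\bar V_G}$ to $C_F(\Fqb)$ is $\IC_{X_{FG}}$, proved by applying Lemma \ref{orbit} to the open immersion $s(V_F) \times C_F \hookrightarrow Y$. Finally, the hypothesis on point counts is Proposition \ref{crapo}: for all $F \leq G \in L$ and all $s > 0$, we have $\chi_{FG}(q^s) = |U_{FG}(\mathbb{F}_{q^s})|$, where $\chi$ is the characteristic function of the interval $[F,G]$ in $L$.

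With these ingredients in hand, Theorem \ref{main} applies verbatim to the pair $(Y, \chi)$. The conclusion is that $\chi$ is an $L$-kernel (which one also knows abstractly from Example \ref{ex:char}), that $\IH^*_{e_F}(\bar V_G)$ is chaste for every $F \leq G$, and that the right KLS-function $f$ associated to $\chi$ satisfies
\[
f_{FG}(t) = \sum_{i \geq 0} t^i \dim \IH^{2i}_{e_F}(\bar V_G).
\]
This yields the first claimed equality and the chastity statement simultaneously. For the second equality $\IH^*_{e_F}(\bar V_G) \cong \IH^*(X_{FG})$, I would simply cite Lemma \ref{cone point}, which says that for any $y \leq z$ the stalk intersection cohomology $\IH^*_{yz}$ agrees with the global intersection cohomology $\IH^*(X_{yz})$; the proof uses that $X_{FG}$ is a weighted affine cone with cone point $e_F$, so the stalk of $\IC_{X_{FG}}$ at the cone point recovers the global IC cohomology.

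There is no real obstacle here since all the nontrivial work has already been done: the proof is almost entirely a bookkeeping exercise in checking that the arrangement setup slots into the axiomatic framework of Section \ref{sec:setup}. The one point that might warrant a sentence of justification in the write-up is the observation that the weak rank function induced by $r_{FG} = \dim V_G - \dim V_F$ from Section \ref{sec:setup} coincides with the combinatorial weak rank function on $L$ used in Example \ref{ex:char}, which follows from Proposition \ref{paving}(2) identifying $V_F$ with $V/V^F$.
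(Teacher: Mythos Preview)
Your proposal is correct and matches the paper's approach exactly: the paper's one-line proof simply says the result follows from Lemma \ref{cone point} and Theorem \ref{main} via Propositions \ref{paving}--\ref{crapo}, which is precisely the checklist you work through. Your write-up is just a more explicit unpacking of that sentence.
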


\begin{proof}
This follows from Lemma \ref{cone point} and Theorem \ref{main} via Propositions \ref{paving}-\ref{crapo}.
\end{proof}

\begin{remark}
The variety $X_{FG}$ is called the {\bf reciprocal plane} of the arrangement $\cA^F_G$.  Its coordinate ring
is isomorphic to the {\bf Orlik-Terao algebra} of $\cA^F_G$, which is by definition the subalgebra of rational functions on $V^F/V^G$ generated
by the reciprocals of the linear forms that define the hyperplanes.
\end{remark}

\begin{remark}
By Proposition \ref{paving}(2), the strata of $Y$ are isomorphic to affine spaces, 
so Equations \eqref{catO1} and \eqref{catO2} tell us that $f_{xy}(t)$ may also be interpreted as the graded dimension
of an Ext group in category $\cO$, or as the graded multiplicity of a costandard in a simple in the Grothendieck group of the graded lift.
\end{remark}

Turning now to the $Z$-polynomial $Z\in\Ih(L)$ associated with $\chi$, we have the following corollary
of Theorem \ref{coh-Z0}.  A version of this result, along with the more general Theorem \ref{Z-arr}, 
originally appeared in \cite[Theorem 7.2]{PXY}.

\begin{corollary}\label{arr-Z0}
For all $F\in L$, $\IH^{*}(\bar V_F)$ is chaste, and $$Z_{\emptyset F}(t) = \sum_{i\geq 0}t^i \dim \IH^{2i}(\bar V_F).$$
\end{corollary}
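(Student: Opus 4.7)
The plan is to obtain Corollary \ref{arr-Z0} as a direct application of Theorem \ref{coh-Z0}, applied to the stratified variety $Y$ with $P = L$, minimal element $0 = \emptyset$, and $P$-kernel $\kappa = \chi$. Essentially all of the substantive input has already been prepared in earlier results, so the work consists of verifying the four hypotheses of Theorem \ref{coh-Z0} in this setting.

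First I would cite Corollary \ref{arrangement-corollary}, which simultaneously provides chastity of every stalk $\IH^*_{FG}$ and identifies the polynomial $f_{FG}(t)$ assembled from those stalk dimensions as the right KLS-function associated with $\chi$. Combined with Proposition \ref{prop:R}(2), this shows that $\kappa = \bar f f^{-1} = \chi$, and Example \ref{ex:char} records that the associated left KLS-function is $g = \zeta$. The global $Z$-function in the hypotheses of Theorem \ref{coh-Z0} therefore coincides with the $Z$-function of $\chi$ on the slice $0 = \emptyset$.

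Second, I would produce $h \in \Ih(L)$ satisfying $\bar h_{\emptyset F}(q^s) = |V_F(\Fqs)|$ for all $F \in L$ and $s > 0$. The natural candidate is $h := \zeta$, which lies in $\Ih(L)$ because $\zeta_{FF}(t) = 1$ and $\deg \zeta_{FG}(t) = 0 < r_{FG}/2$ whenever $F < G$. By Proposition \ref{paving}(2), $V_F$ is isomorphic to the affine space $V/V^F$ of dimension $\rk F$, so $|V_F(\Fqs)| = q^{s \cdot \rk F}$, which indeed equals $\bar\zeta_{\emptyset F}(q^s) = q^{s \cdot r_{\emptyset F}}$. Third, each $\bar V_F$ is a closed subvariety of $Y \subset \bP$, which is proper as a subvariety of a product of projective lines, hence $\bar V_F$ is itself proper.

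With these hypotheses verified, Theorem \ref{coh-Z0} delivers both the chastity of $\IH^*(\bar V_F)$ and the identity $Z_{\emptyset F}(t) = \sum_{i \geq 0} t^i \dim \IH^{2i}(\bar V_F)$, and as a byproduct reconfirms that $g_{\emptyset F}(t) = h_{\emptyset F}(t) = 1$. There is no genuine obstacle: the heavy lifting is packaged into Corollary \ref{arrangement-corollary} and Example \ref{ex:char}, and the only mildly substantive step is matching the point count $|V_F(\Fqs)|$ with $\bar\zeta_{\emptyset F}(q^s)$, which follows at once from Proposition \ref{paving}(2).
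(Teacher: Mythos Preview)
Your proposal is correct and follows essentially the same route as the paper's proof: both apply Theorem \ref{coh-Z0} with $h = \zeta$, using that $|V_F(\Fqs)| = q^{s\,r_{\emptyset F}} = \bar\zeta_{\emptyset F}(q^s)$ and that the left KLS-function of $\chi$ is $\zeta$ (Example \ref{ex:char}). You are somewhat more explicit than the paper in verifying the standing hypotheses of Theorem \ref{coh-Z0}---properness of $\bar V_F$ and the identification $\kappa = \bar f f^{-1} = \chi$ via Corollary \ref{arrangement-corollary}---but the argument is the same.
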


\begin{proof}
As we noted in Example \ref{ex:char}, the $L$-kernel $\chi$ has left KLS-polynomial $\eta$,
and for all $F\in L$ and $s>0$, $|V_F(\Fqs)| = q^{sr_{\emptyset F}} = \bar \eta_{\emptyset F}(q^s)$.
Then Theorem \ref{coh-Z0} gives us our result.
\end{proof}

As in Section \ref{sec:flag}, we can give a cohomological interpretation of $Z_{FG}(t)$ for any $F\leq G\in L$.

\begin{theorem}\label{Z-arr}
For all $F\leq G\in L$, $\IH^{*}(\bar C_F\cap \bar V_G)$ is chaste, and $$Z_{FG}(t) = \sum_{i\geq 0}t^i \dim \IH^{2i}(\bar C_F\cap \bar V_G).$$
\end{theorem}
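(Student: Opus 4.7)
The plan is to reduce Theorem \ref{Z-arr} to Corollary \ref{arr-Z0} applied to the contracted arrangement $\cA^F_G$ in $V^F/V^G$ (as defined in the proof of Proposition \ref{crapo}). Its lattice of flats is canonically isomorphic as a weakly ranked poset to the interval $[F,G] \subset L$ via $F' \mapsto F' \setminus F$, and under this isomorphism the kernel $\chi$ restricts compatibly; in particular, the $Z$-polynomial of $\cA^F_G$ from the empty flat to the top flat equals $Z_{FG}(t)$. It therefore suffices to produce a stratification-preserving isomorphism between $\bar C_F \cap \bar V_G$ and the Schubert variety of $\cA^F_G$, since Corollary \ref{arr-Z0} applied to $\cA^F_G$ at its top flat then delivers both chastity of $\IH^*$ and the Poincar\'e polynomial identity.

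For this geometric identification I would use the coordinate projection $\bP \to \prod_{i \in G \setminus F} \bP_i$. Any point of $\bar C_F$ satisfies $p_i = 0$ for $i \in F$, because this vanishing condition is closed and holds on $C_F = \bA^F \cap Y$; any point of $\bar V_G = \bigsqcup_{F' \leq G} V_{F'}$ satisfies $p_i = \infty$ for $i \in G^c$ by Proposition \ref{paving}(3). Hence a point of $\bar C_F \cap \bar V_G$ is determined by its coordinates indexed by $G \setminus F$. One then checks that the image of this projection is precisely the closure of $V^F/V^G$ inside $\prod_{i \in G \setminus F} \bP_i$, which is by definition the Schubert variety of $\cA^F_G$, and that the strata $\bar C_F \cap V_{F'}$ for $F \leq F' \leq G$ are carried onto the corresponding strata of this smaller Schubert variety under the lattice isomorphism.

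The main obstacle is the surjectivity of this coordinate projection onto the Schubert variety of $\cA^F_G$. Injectivity is immediate from the fixed boundary values on the suppressed coordinates, but surjectivity requires showing that any point of the Schubert variety of $\cA^F_G$, when extended by $0$ on the $F$-coordinates and by $\infty$ on the $G^c$-coordinates, lies in $Y = \bar V \subset \bP$. This amounts to a careful analysis of how the defining equations of the closure $\bar V$ behave under the coordinate projection; this is the technical heart of the argument, and it is also the place where the reasoning genuinely differs from the Knutson--Woo--Yong stalk factorization used in the Richardson variety case of Theorem \ref{richardson}.
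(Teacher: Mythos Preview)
Your proposal is correct and follows essentially the same approach as the paper: the paper's proof simply asserts that $\bar C_F\cap\bar V_G$ is isomorphic to the Schubert variety of $\cA^F_G$ and that $[F,G]$ is isomorphic to its lattice of flats, then invokes Corollary~\ref{arr-Z0}. You have supplied more detail than the paper does---in particular the coordinate projection and the analysis of the fixed values $p_i=0$ on $F$ and $p_i=\infty$ on $G^c$---and the surjectivity step you flag as the ``technical heart'' is exactly the content of the isomorphism the paper takes for granted.
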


\begin{proof}
The variety $\bar C_F\cap \bar V_G$ is isomorphic to the variety $Y$ associated with the arrangement $\cA^F_G$.
Similarly, the interval $[F,G]\subset L$ is isomorphic as a weakly ranked poset to the lattice of flats of $\cA^F_G$.
Thus the theorem follows from Corollary \ref{arr-Z0} applied to the arrangement $\cA^F_G$
and the pair of flats $\emptyset \leq G\smallsetminus F$.
\end{proof}

\begin{remark}
We have chosen to work with arrangements over a finite field in order to apply the techniques
of Section \ref{sec:geometry}, but this restriction is not important.  First, given a hyperplane arrangement
over any field, it is possible to choose a combinatorially equivalent arrangement (one with the same
matroid) over a finite field \cite[Theorems 4 \& 6]{Rado}.  Second, if we are given an arrangement over
the complex numbers and we prefer to work with the topological intersection cohomology
of the analogous complex varieties, the formulas in the statements of
Corollary \ref{arrangement-corollary} and Theorem \ref{Z-arr} still
hold (see \cite[Proposition 3.12]{EPW} and \cite[Theorem 7.2]{PXY}).
\end{remark}

\begin{remark}\label{different}
The proof of Theorem \ref{richardson} (the analogue of Theorem \ref{Z-arr} for Richardson varieties)
relied on two special facts, namely Equations \eqref{opp} and \eqref{KWY}.
In the context of hyperplane arrangements, the analogues of these two equations hold {\em a posteriori},
but it is not clear how one would prove them directly.  In particular, the variety $C_F$ is not smooth, so 
the decomposition $Y = \bigsqcup_{F\in L} C_F$ is not a stratification, and it is not possible
to apply Theorem \ref{main} to obtain the analogue of Equation \eqref{opp}.
On the other hand, the proof of Theorem \ref{Z-arr} relies on the fact that any interval in the lattice
of flats of an arrangement is isomorphic to the lattice of flats of another arrangement; the analogous
statement for the Bruhat order on a Coxeter group is false.  Thus the proofs of Theorems \ref{richardson}
and \ref{Z-arr} are truly distinct.
\end{remark}

\subsection{Toric varieties}\label{sec:toric}
Let $T$ be a split algebraic torus over $\Fq$ with cocharacter lattice $N$
and let $\Sigma$ be a rational fan in $N_\R$.  We consider $\Sigma$ to be a weakly ranked poset ordered by {\em reverse} inclusion, with weak rank function given by relative dimension.  We will assume that $\{0\}\in \Sigma$;
this is the {\em maximal} element of $\Sigma$, and we will denote it simply by 0.

Let $Y$ be the $T$-toric variety associated with $\Sigma$.
The cones of $\Sigma$ are in bijection with $T$-orbits in $Y$ and with $T$-invariant affine open subsets of $Y$.
Given $\sigma\in\Sigma$, let $V_\sigma$ denote the corresponding orbit, let $W_\sigma$ denote the corresponding
affine open subset, and let $T_\sigma\subset T$ be the stabilizer of 
any point in $V_\sigma$.  We then have $\dim V_\sigma = \codim \sigma$, and \cite[Theorem 3.2.6]{CLS}
$$\sigma\leq\tau\iff V_\sigma \subset \bar V_\tau \iff W_\sigma\supset W_\tau\iff W_\sigma\supset V_\tau.$$
For each $\sigma\in\Sigma$, we have a canonical identification $V_\sigma \cong T/T_\sigma$, and we define $e_\sigma\in V_\sigma$
to be the identity element of $T/T_\sigma$.  In particular, we have $T_\sigma\subset T \cong V_0 \subset Y$ for all $\sigma$, and we define
$$C_\sigma := W_\sigma\cap\bar T_\sigma.$$
The cocharacter lattice of $T_\sigma$ is equal to $N_\sigma := N\cap \R\sigma$, $C_\sigma$
is isomorphic to the $T_\sigma$-toric variety associated with the cone $\sigma\subset N_{\sigma,\R}$,
and $e_\sigma\in C_\sigma$ is the unique fixed point.  If $\sigma\leq\tau$, then 
$U_{\sigma\tau} := C_\sigma\cap V_\tau$ is equal to the $T_\sigma$-orbit in $C_\sigma$ corresponding
to the face $\tau$ of $\sigma$.  In particular, this means that 
$$|U_{\sigma\tau}(\Fqs)| = (q^s-1)^{r_{\sigma\tau}} = \lambda_{\sigma\tau}(q^s),$$
where $\la\in\scrI(\Sigma)$ is the $\Sigma$-kernel of Example \ref{ex:Eulerian}.

For each $\sigma\in\Sigma$,
choose a lattice point $n_\sigma\in N$ lying in the relative interior of $\sigma$.  Then $n_\sigma$ is a cocharacter
of $T$, and thus defines a homomorphism $\rho_\sigma:\gm\to T\subset\Aut(Y)$.  The fact that $\sigma$ lies
in the relative interior of $\sigma$ implies that $C_\sigma$ is a weighted affine cone with respect to $\rho_\sigma$
with cone point $e_\sigma$.  Choose in addition a section $s_\sigma:T/T_\sigma\to T$ of the projection.
Then the action map $s_\sigma(T/T_\sigma)\times C_\sigma\to Y$ is an open immersion, thus 
Lemma \ref{orbit} tells us that the hypotheses of Section \ref{sec:setup} are satisfied.  We therefore obtain
the following corollary to Theorem \ref{main}, which originally appeared in \cite[Theorem 6.2]{DL}
(see also \cite[Theorem 1.2]{Fieseler}).

\begin{corollary}\label{f-sigma}
Let $f\in\Ih(\Sigma)$ be the right KLS-function associated with $\la$.  For all $\sigma\leq\tau$,
$\IH^*_{e_\sigma}(\bar V_\tau)$ is chaste and
$$\sum_{i\geq 0}t^i \dim \IH^{2i}_{e_\sigma}(\bar V_\tau) = f_{\sigma\tau}(t).$$
\end{corollary}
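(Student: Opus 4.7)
My plan is to observe that everything needed to apply Theorem \ref{main} has essentially been assembled in the paragraphs leading up to the corollary, so the proof will amount to checking off the hypotheses and invoking that theorem (together with Lemma \ref{cone point} if one wants the formulation in terms of $\IH^*(X_{\sigma\tau})$, though as stated here only the stalk version is needed).

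First I would verify the framework of Section \ref{sec:setup}. The decomposition $Y = \bigsqcup_{\sigma \in \Sigma} V_\sigma$ is the standard orbit stratification, with the resulting partial order on $\Sigma$ being reverse inclusion and the weak rank function $r_{\sigma\tau} = \codim\tau - \codim\sigma$ matching the one on $\Sigma$ as declared. Next, the cocharacter $n_\sigma$ lying in the relative interior of $\sigma$ gives a $\gm$-action $\rho_\sigma$ under which $C_\sigma$ is a weighted affine cone with cone point $e_\sigma$: because $n_\sigma$ pairs strictly positively with every nonzero character in the dual cone $\sigma^\vee \cap \sigma^\perp$-free part, the coordinate ring of $C_\sigma$ is nonnegatively graded with $e_\sigma$ cutting out exactly the positively graded ideal. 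The open immersion $s_\sigma(T/T_\sigma) \times C_\sigma \to Y$ then lets me invoke Lemma \ref{orbit} to conclude that $\IC_{\bar V_\tau}|_{C_\sigma(\Fqb)} \cong \IC_{X_{\sigma\tau}}$ for all $\sigma \leq \tau$. Thus the hypotheses of Section \ref{sec:setup} are satisfied.

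Second, I would invoke the point count already computed in the text: since $U_{\sigma\tau}$ is identified with the $T_\sigma$-orbit on $C_\sigma$ corresponding to the face $\tau$ of $\sigma$, it is isomorphic to an algebraic torus of rank $r_{\sigma\tau}$, giving
\[
|U_{\sigma\tau}(\Fqs)| = (q^s - 1)^{r_{\sigma\tau}} = \la_{\sigma\tau}(q^s).
\]
This is precisely the hypothesis required by Theorem \ref{main} for the element $\kappa = \la \in \scrI(\Sigma)$. Since fans are Eulerian (noted in Example \ref{ex:Eulerian}), $\la$ is indeed a $\Sigma$-kernel, so Theorem \ref{main} applies and yields both the chastity of $\IH^*_{e_\sigma}(\bar V_\tau) = \IH^*_{\sigma\tau}$ and the identification of the right KLS-function of $\la$ with the generating function for the even-degree stalk dimensions.

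There is not really a hard step here; the work has already been done in establishing the setup and computing $|U_{\sigma\tau}(\Fqs)|$. The only mildly subtle point worth double-checking in writing this out carefully is that the cocharacter $n_\sigma$ genuinely exhibits $C_\sigma$ as a \emph{weighted} affine cone (not just a $\gm$-variety with $e_\sigma$ fixed), which boils down to the fact that $n_\sigma$ lies in the \emph{relative interior} of $\sigma$ so that every lattice point of $\sigma^\vee$ pairs nonnegatively with $n_\sigma$ and the kernel is exactly $\sigma^\perp$, which corresponds to functions constant along the orbit $V_\sigma$ and hence vanishing at $e_\sigma$ only on the ideal defining $\{e_\sigma\}$ in $C_\sigma$. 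Once this is in hand, the corollary follows immediately from Theorem \ref{main}.
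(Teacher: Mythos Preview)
Your proposal is correct and matches the paper's approach exactly: the paper does not even write out a separate proof, since the paragraphs preceding the corollary verify the hypotheses of Section~\ref{sec:setup} (via Lemma~\ref{orbit} and the cocharacter $n_\sigma$) and compute $|U_{\sigma\tau}(\Fqs)| = \la_{\sigma\tau}(q^s)$, after which the corollary is immediate from Theorem~\ref{main}. One tiny remark: you need not invoke the Eulerian property to know $\la$ is a $\Sigma$-kernel before applying Theorem~\ref{main}, since that theorem only requires $\kappa\in\scrI(P)$ as input and deduces the kernel property as part of its conclusion.
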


\begin{remark}
Let $\Delta$ be a lattice polytope, and let $\Sigma$ be the fan consisting of the cone
over $\Delta$ along with all of its faces.  Then $\Sigma$, ordered by reverse inclusion,
is isomorphic to the opposite of the face poset of $\Delta$, ordered by inclusion.
It follows from Remark \ref{opposite} that, if $g\in\Ih(\Delta) \cong \Ih(\Sigma^*)$ is the left KLS-function
associated with the Eulerian poset of faces of $\Delta$, then $g^* = f\in \Ih(\Sigma)$.
In particular, the $g$-polynomial $g_{\emptyset\Delta}(t)$ is equal to $f_{c\Delta 0}(t)$.
\end{remark}

\subsection{Hypertoric varieties}\label{sec:hypertoric}
Let $N$ be a finite dimensional lattice and let $\gamma := (\gamma_i)_{i\in \cI}$ 
be an $\cI$-tuple of nonzero elements of $N$ that together span a cofinite sublattice of $N$.
Then $\gamma$ defines a homomorphism from $\Z^\cI$ to $N$, along with
a dual inclusion from $N^*$ to $\Z^\cI$.  As in Section \ref{sec:arrangement}, we define a subset
$F\subset\cI$ to be a {\bf flat} if there exists an element $m\in N^*\subset\Z^\cI$ such that 
$m_i = 0\iff i\in F$.  Given a flat $F$, we let $\gamma_F := (\gamma_i)_{i\in F}$ and 
we define $N_F \subset N$ to be the saturation of the span of $\gamma_F$.  
We also define $N^F:=N/N_F$, and we define $\gamma^F$ to be the image of $(\gamma_i)_{i\notin F}$
in $N^F$.

Choose a prime power $q$ with the property that, for any subset $\cJ\subset\cI$, the multiset
$\{\gamma_i\mid i\in\cJ\}$ is linearly independent only if its image in $N_{\Fq}$ is linearly independent.
Let $Q := \Fq[z_i,w_i]_{i\in\cI}$.  This ring admits a grading by the group $\Z^\cI = \Z\{x_i\mid i\in\cI\}$
in which $\deg z_i = -\deg w_i = x_i$.  The degree zero part $Q_0 = \Fq[z_iw_i]_{i\in\cI}$
maps to $\Sym N_{\Fq}$ by sending $z_iw_i$ to the reduction modulo $q$ of $\gamma_i$.  Let $Q_{N^*}$
be the subring of $Q$ with basis consisting of $\Z^\cI$-homogeneous elements whose degrees\
lie in $N^*\subset\Z^\cI$, and let $R:=Q_{N^*}\otimes_{Q_0}\Sym N_{\Fq}$.
The variety $Y = Y(\gamma) := \Spec R$ is called a {\bf hypertoric variety}.

Let $\becircled Y\subset Y$ be the open subvariety defined by the nonvanishing of all elements of $R$ that lift
to monomials in $Q$.   Let $L$ be the lattice of flats of $\gamma$.  We have a stratification 
$$Y = \bigsqcup_{F\in L} V_F,$$
with the property that $V_F \cong \becircled Y(\gamma^F)$ \cite[Equation 5]{PW07}.
In particular, the largest stratum is $V_\emptyset$ and the smallest stratum is $V_{\cI}$.
More generally, the partial order induced by the stratification is the opposite of the inclusion order.
For any $F\subset G$, the dimension of $V_F$ minus the dimension of $V_G$ is equal to $2r_{FG}$,
where $r$ is the usual weak rank function (as in Example \ref{ex:hypertoric}).

At this point, we are forced to depart from the setup of Section \ref{sec:setup}.
We are supposed to define a subvariety $C_F\subset Y$ for each flat $F$, satisfying certain properties;
then for every $F\subset G$, we would consider the varieties $U_{GF} = C_G\cap V_F$ and 
$X_{GF} = C_G\cap \bar V_F$.  Morally, we should have $C_F \cong Y(\gamma_F)$, 
$X_{GF}\cong Y(\gamma^F_G)$, and $U_{GF}\cong \becircled Y(\gamma^F_G)$.
Unfortunately, we do not know of any natural way to embed $Y(\gamma_F)$ into $Y$ to achieve these isomorphisms.
Instead, we will simply define $X_{GF}$ and $U_{GF}$ as above.  The conclusion of Lemma \ref{X-strat}
clearly holds for this definition, while the conclusion of Lemma \ref{slice} follows from \cite[Lemma 2.4]{PW07}.
Thus Theorem \ref{main} still holds as stated.  By \cite[Proposition 4.2]{PW07},
for all $s>0$ and all flats $F\subset G$, we have $|U_{GF}(\Fqs)| = \kappa_{FG}(q^s)$,
where $\kappa\in\scrI(L,2r)$ is the $(L,2r)$-kernel of Example \ref{ex:hypertoric}.

\begin{corollary}
Let $\hbc\in\scrI(L,2r)$ be the left KLS-function associated with the $(L,2r)$-kernel 
$\kappa$ of Example \ref{ex:hypertoric}.
For all flats $F\subset G\in L$, $\IH^{*}(\bar X_{GF})$ is chaste, and
$$\hbc_{FG}(t) = \sum_{i\geq 0} t^i \dim \IH^{2i}(\bar X_{GF}) .$$
\end{corollary}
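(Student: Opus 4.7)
The plan is to parallel the proof of Corollary \ref{arrangement-corollary}, inserting an additional order-reversal step. The stratification of $Y$ by the strata $V_F$ induces the opposite of the inclusion order on $L$: $V_G \subset \bar V_F$ exactly when $F \subset G$. Thus when we apply Theorem \ref{main} to $Y$, the ambient weakly ranked poset is $(L,2r)^*$ rather than $(L,2r)$. The paper has already established that the paper's $X_{GF}$ and $U_{GF}$ satisfy the conclusions of Lemmas \ref{X-strat} and \ref{slice}, so the theorem applies; the input kernel is $\kappa^*$, whose entries are determined by the point count $|U_{GF}(\Fqs)| = \kappa_{FG}(q^s)$ from \cite[Proposition 4.2]{PW07}. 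The output is that $\IH^*_{GF}$ is chaste for every $F \subset G$, and that the right KLS-function of $\kappa^*$ in $(L,2r)^*$ records the Poincar\'e polynomial $\sum_{i\geq 0} t^i \dim \IH^{2i}_{GF}$.

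By Remark \ref{opposite}, the right KLS-function of $\kappa^*$ in the opposite poset coincides, after reindexing, with the left KLS-function of $\kappa$ in the original inclusion order; and by Proposition \ref{hypertoric KL}, that left KLS-function is $\hbc$. Combining these two identifications yields the polynomial equality $\hbc_{FG}(t) = \sum_{i\geq 0} t^i \dim \IH^{2i}_{GF}$.

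It remains to identify the stalk $\IH^*_{GF} = \IH^*_{e_G}(\bar V_F)$ with the global intersection cohomology $\IH^*(\bar X_{GF})$. The abstract hypertoric variety $X_{GF}\cong Y(\gamma^F_G)$ carries a natural contracting $\gm$-action under which its coordinate ring is nonnegatively graded with a unique fixed cone point; the classical cone argument \cite[Lemma 4.5(a)]{KL80} therefore identifies the stalk of $\IC_{X_{GF}}$ at the cone point with the global intersection cohomology. The analogue of Lemma \ref{slice} supplied by \cite[Lemma 2.4]{PW07}, applied with $x=y=G$ and $z=F$, then identifies this cone-point stalk with $\IH^*_{GF}$, and chastity of $\IH^*(\bar X_{GF})$ is inherited through this isomorphism.

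The main obstacle is that, unlike the other examples in Section \ref{sec:examples}, the hypertoric setting admits no natural subvariety $C_G \subset Y$ playing the role required by Section \ref{sec:setup}. Consequently Lemma \ref{cone point} cannot be invoked off the shelf; the weighted-cone structure must be recovered intrinsically on the abstract variety $X_{GF}=Y(\gamma^F_G)$ and then bridged back to the stalk at $e_G \in \bar V_F$ via the slice identification from \cite{PW07}. All other ingredients are direct applications of the combinatorics developed in Section \ref{sec:combinatorics} and the Lefschetz machinery of Section \ref{sec:geometry}.
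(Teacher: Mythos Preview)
Your proposal is correct and follows essentially the same approach as the paper: apply Theorem \ref{main} to the opposite poset $(L^*,2r^*)$ with kernel $\kappa^*$, then invoke Remark \ref{opposite} to identify the resulting right KLS-function with $\hbc$. The paper's proof is terser---it absorbs the stalk-to-global identification into the assertion that ``Theorem \ref{main} still holds as stated''---whereas you correctly spell out that the cone structure on $X_{GF}\cong Y(\gamma^F_G)$ is what makes the analogue of Lemma \ref{cone point} go through in the absence of an ambient $C_G\subset Y$.
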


\begin{proof}
As noted above, our stratification of $Y$ induces the weakly ranked poset $(L^*,2r^*)$.
Let $f$ be the right KLS-function associated with the $(L^*,2r^*)$-kernel $\kappa^*$.
For all $s>0$ and all flats $F\subset G$, we have 
$\kappa^*_{GF}(q^s) = \kappa_{FG}(q^s) = |U_{GF}(\Fqs)|$, thus Theorem \ref{main}
tells us that $\IH^{*}(\bar X_{GF})$ is chaste, and
$$f_{GF}(t) = \sum_{i\geq 0} t^i \dim \IH^{2i}(\bar X_{GF}) .$$
By Remark \ref{opposite}, we have $\hbc = f^*$, which proves the corollary.
\end{proof}

\bibliography{./symplectic}
\bibliographystyle{amsalpha}

\end{document}